\theoremstyle{definition}
\newtheorem{definition}{Definition}
\newtheorem{example}[definition]{Example}
\theoremstyle{plain}
\newtheorem{theorem}{Theorem}
\newtheorem{proposition}[definition]{Proposition}
\newtheorem{lemma}[definition]{Lemma}
\newtheorem{remark}[definition]{Remark}
\newtheorem{corollary}[definition]{Corollary}
\title{Zeros with multiplicity, Hasse derivatives and\\linear factors of general skew polynomials}
\author{Umberto Mart{\'i}nez-Pe\~{n}as \thanks{umberto.martinez@uva.es}}
\affil{IMUVa-Mathematics Research Institute,\\University of Valladolid, Spain}
\date{}
\begin{document}

\maketitle

\begin{abstract}
In this work, multiplicities of zeros of skew polynomials are studied. Two distinct definitions are considered: First, $ a $ is said to be a zero of $ F $ of multiplicity $ r $ if $ (x-a)^r $ divides $ F $ on the right; second, $ a $ is said to be a zero of $ F $ of multiplicity $ r $ if some skew polynomial $ P = (x-a_r) \cdots (x-a_2) (x-a_1) $, having $ a_1 = a $ as its only right zero, divides $ F $ on the right. Neither of these two notions implies the other for general skew polynomials. We show that, in the first case, Lam and Leroy's concept of P-independence does not behave naturally, whereas a union theorem still holds. In contrast, we show that P-independence behaves naturally for the second notion of multiplicities. As a consequence, we provide extensions of classical commutative results to general skew polynomials. These include: (1) The upper bound on the number of (P-independent) zeros (counting multiplicities) of a skew polynomial by its degree, and (2) The equivalence of P-independence, Hermite interpolation and the invertibility of confluent Vandermonde matrices (for which we introduce skew polynomial Hasse derivatives). 

\textbf{Keywords:} Division rings; Hasse derivative; Hermite interpolation; multiplicity; quaternionic polynomials; skew polynomials; Vandermonde matrices.

\textbf{MSC:} 12E15; 15B33; 16S36.

\end{abstract}

\section{Introduction} \label{sec intro}

Multiplicities of zeros of polynomials constitute a central elementary concept with a wide range of generalizations and applications. One of the most basic results states that, if $ F $ is a non-zero polynomial over a field and $ a_1, a_2, \ldots, a_n $ are all of its $ n $ distinct roots (in the same field), with multiplicities $ r_1, r_2, \ldots, r_n $, respectively, then
\begin{equation}
\sum_{i=1}^n r_i \leq \deg(F),
\label{eq degree bound intro}
\end{equation}
where equality holds for all $ F $ if, and only if, the field is algebraically closed. The bound (\ref{eq degree bound intro}) is equivalent to Hermite interpolation (in the sense of Theorems \ref{th hermite interpolation nonconj naive case} and \ref{th hermite interpolation II} using Hasse derivatives \cite{hasse} in positive characteristic) and the invertibility of confluent Vandermonde matrices \cite{kalman}.

However, multiplicities in non-commutative algebra are not understood to the same extent. The study of zeros of conventional polynomials over division rings was initiated in \cite{wedderburn} and later \cite{gordon}. Zeros of quaternionic regular functions (which include polynomials) were studied in \cite{gentili}, and their multiplicities were considered in \cite[Def. 5.5]{gentili-zero}. 

In an alternative line of research, Lam and Leroy \cite{lam, lam-leroy} described exactly the structure of zeros (without counting multiplicities) of general skew polynomials \cite{ore}, which include conventional polynomials over division rings \cite{wedderburn}, linearized polynomials over finite fields \cite{orespecial}, and differential polynomials \cite{jacobson-derivation}, among others. Lam and Leroy's works characterize evaluation points where (\ref{eq degree bound intro}) holds without counting multiplicities. Such sets of points are called \textit{P-independent}. Lam and Leroy's results admit natural generalizations to (free) multivariate skew polynomial rings \cite{lin-multivariateskew, multivariateskew}. 

In \cite{eric, gentili-zero, johnson-thesis}, an element $ a $ is considered to be a zero (say, on the right) of a skew polynomial $ F $ with multiplicity $ r \in \mathbb{N} $ if $ (x-a)^r $ divides $ F $ on the right. Although this notion works fine in the quaternionic case \cite{bolotnikov-conf}, we will show that it becomes quite pathological for general skew polynomials (see Examples \ref{ex power of linear term is no multiplicity} and \ref{ex full conjugacy class pathology for naive mult}).

An interesting alternative notion of multiplicity for zeros of quaternionic polynomials was recently given by Bolotnikov \cite{bolotnikov-conf, bolotnikov-zeros}. Bolotnikov defines $ a $ as a zero of $ F $ of multiplicity $ r \in \mathbb{N} $ if there exists a skew polynomial of degree $ r $ that factors completely 
\begin{equation}
P = (x-a_r) \cdots (x - a_2) (x-a_1)
\label{eq P intro}
\end{equation} 
such that $ P $ divides $ F $ on the right, $ a = a_1 $ and $ a_1 $ is the only zero of $ P $ (on the right). Over the quaternions, this latter condition on $ P $ holds if $ a_1 = a_2 = \ldots = a_r $ (see Corollary \ref{cor multiplicity over quaternions}), but in contrast with the commutative case, the converse does not hold. In other words, $ a_1 $ may be the only right zero of $ P $, yet the set $ \{ a_1, a_2, \ldots, a_r \} $ may have more than one element. As we will show, even the direct implication does not hold for general skew polynomials. In other words, $ (x-a)^r $ may have more than one zero without counting multiplicities (see Example \ref{ex power of linear term is no multiplicity}).

In this manuscript, we study separately both definitions of multiplicity of zeros of general skew polynomials. We extend classical results, including the degree bound (\ref{eq degree bound intro}) above (Theorems \ref{th upper bound multiplicities by degree general}, \ref{th deg bound for nonconj naive case} and \ref{th upper bound multiplicities by degree II}), Hermite interpolation based on Hasse derivatives or confluent Vandermonde matrices (Theorems \ref{th lagrange interpolation}, \ref{th hermite interpolation nonconj naive case} and \ref{th hermite interpolation II}), or multiplicity enhancements of Hilbert's Theorem 90 (Theorem \ref{th min skew pol of conjugacy class with mult} and Corollary \ref{cor hilbert 90}). We will also characterize skew polynomials that have a unique factorization in linear terms (Theorem \ref{th charact multiplicity seqs in alg conj}).

\section{High-degree evaluation points} \label{sec high-degree zeros}

In this section, we provide an abstract framework for generalizing the study of zeros of skew polynomials by Lam and Leroy \cite{lam, lam-leroy}. The final objective is to be able to handle multiplicities of classical zeros of skew polynomials (Sections \ref{sec multi from powers} and \ref{sec multi from linear terms}). 

Skew polynomial rings were introduced by Ore \cite{ore} as follows. Fix a division ring $ \mathbb{F} $, let $ \mathbb{N} $ be the set of natural numbers including $ 0 $, and set $ \mathbb{Z}_+ = \mathbb{N} \setminus \{ 0 \} $. Let $ \mathcal{R} $ be the left vector space over $ \mathbb{F} $ with basis $ \{ x^i \mid i \in \mathbb{N} \} $, where we denote $ 1 = x^0 $ and $ x = x^1 $. The degree of a non-zero $ F = \sum_{i \in \mathbb{N}} F_i x^i \in \mathcal{R} $, where $ F_i \in \mathbb{F} $ for all $ i \in \mathbb{N} $, is the maximum $ i $ such that $ F_i \neq 0 $, and is denoted by $ \deg(F) $. We define $ \deg(F) = - \infty $ if $ F = 0 $.

A product in $ \mathcal{R} $ turns it into a (non-commutative) ring with identity $ 1 $, with $ \mathbb{F} \subseteq \mathcal{R} $ as a subring, where $ x^i x^j = x^{i+j} $, for all $ i,j \in \mathbb{N} $, and $ \deg(FG) = \deg(F) + \deg(G) $ for all $ F,G \in \mathcal{R} $, if, and only if, there exist $ \sigma, \delta : \mathbb{F} \longrightarrow \mathbb{F} $ such that
\begin{equation}
xa = \sigma(a) x + \delta(a),
\label{eq product over constants and variables}
\end{equation}
for all $ a \in \mathbb{F} $, where $ \sigma : \mathbb{F} \longrightarrow \mathbb{F} $ is a ring endomorphism and $ \delta : \mathbb{F} \longrightarrow \mathbb{F} $ is a \textit{$ \sigma $-derivation}: That is, $ \delta $ is additive and $ \delta(ab) = \sigma(a)\delta(b) + \delta(a)b $, for all $ a,b \in \mathbb{F} $.

For each such pair $ (\sigma,\delta) $, we denote $ \mathcal{R} = \mathbb{F}[x; \sigma, \delta] $ when the product is given by (\ref{eq product over constants and variables}), and we call $ \mathbb{F}[x; \sigma, \delta] $ the \textit{skew polynomial ring} over $ \mathbb{F} $ with morphism $ \sigma $ and derivation $ \delta $. The \textit{conventional polynomial ring} $ \mathbb{F}[x] $ is obtained by choosing the identity morphism $ \sigma = {\rm Id} $ and the zero derivation $ \delta = 0 $. Moreover, $ \mathbb{F}[x; \sigma, \delta] $ is commutative if, and only if, $ \mathbb{F} $ is a field, $ \sigma = {\rm Id} $ and $ \delta = 0 $. This will be called \textit{the commutative case}.

The ring $ \mathbb{F}[x; \sigma, \delta] $ is a right Euclidean domain \cite{ore}. Thanks to this property, Lam and Leroy \cite{lam, lam-leroy} provided a natural definition of evaluation by forcing a ``Remainder Theorem'' on the right. We now extend this notion of evaluation as follows.

\begin{definition} [\textbf{High-degree evaluation}] \label{def evaluation high degree}
Given $ F,P \in \mathbb{F}[x;\sigma,\delta] $, we define the evaluation of $ F $ at $ P $, denoted by $ F(P) $, as the remainder of $ F $ when divided by $ P $ by Euclidean division on the right. Thus $ F(P) \in \mathbb{F}[x;\sigma, \delta] $ is a skew polynomial such that $ \deg(F(P)) < \deg(P) $. We say that $ P $ is a zero of $ F $ if $ F(P) = 0 $.
\end{definition}

The evaluation of $ F \in \mathbb{F}[x;\sigma,\delta] $ in $ a \in \mathbb{F} $ given in \cite{lam, lam-leroy} corresponds to the evaluation of $ F $ in $ x - a \in \mathbb{F}[x;\sigma,\delta] $ according to Definition \ref{def evaluation high degree}, and will be denoted $ F(a) \in \mathbb{F} $.

In this manuscript, we will consider non-empty sets $ \mathcal{U} \subseteq \mathbb{F}[x;\sigma,\delta] $ such that $ \deg(F) \geq 1 $, for all $ F \in \mathcal{U} $. The set $ \mathcal{U} $ indicates where we may evaluate, thus where we may take zeros from. The case of classical zeros of skew polynomials is recovered by considering 
\begin{equation}
\mathcal{U}_0 = \{ x-a \in \mathbb{F}[x;\sigma,\delta] \mid a \in \mathbb{F} \}.
\label{eq universal set 0}
\end{equation}
We will identify a set $ \Omega \subseteq \mathcal{U}_0 $ with its underlying set $ \{ a \in \mathbb{F} \mid x-a \in \Omega \} \subseteq \mathbb{F} $. In Section \ref{sec multi from powers}, we will count multiplicities by considering zeros in the universal set
\begin{equation}
\mathcal{U}_1 = \left\lbrace (x-a)^r \in \mathbb{F}[x;\sigma,\delta] \mid a \in \mathbb{F}, r \in \mathbb{Z}_+ \right\rbrace ,
\label{eq universal set I}
\end{equation}
and from Section \ref{sec multi from linear terms} on, we will consider the universal set
\begin{equation}
\mathcal{U}_2 = \left\lbrace P_\mathbf{a} \in \mathbb{F}[x;\sigma,\delta] \mid Z(P_\mathbf{a}) = \{ a_1 \}, \mathbf{a} \in \mathbb{F}^r, r \in \mathbb{Z}_+ \right\rbrace ,
\label{eq universal set II}
\end{equation}
where $ P_\mathbf{a} = (x-a_r) \cdots (x-a_2)(x-a_1) \in \mathbb{F}[x;\sigma,\delta] $ if $ \mathbf{a} = (a_1, a_2, \ldots, a_r) \in \mathbb{F}^r $. Whereas it trivially holds that $ \mathcal{U}_0 \subseteq \mathcal{U}_1 $ and $ \mathcal{U}_0 \subseteq \mathcal{U}_2 $, neither the inclusion $ \mathcal{U}_1 \subseteq \mathcal{U}_2 $ nor the inclusion $ \mathcal{U}_2 \subseteq \mathcal{U}_1 $ hold in general (see Example \ref{ex power of linear term is no multiplicity} and Section \ref{sec particular cases}). 
%

The following definition is inspired by classical Algebraic Geometry.

\begin{definition} [\textbf{High-degree zeros}]
Given $ A \subseteq \mathbb{F}[x;\sigma,\delta] $, its zero set in $ \mathcal{U} $ is
$$ Z_\mathcal{U}(A) = \{ P \in \mathcal{U} \mid F(P) = 0, \textrm{ for all } F \in A \}. $$
Similarly, given a set $ \Omega \subseteq \mathcal{U} $, the ideal of skew polynomials vanishing at $ \Omega $ is 
$$ I(\Omega) = \{ F \in \mathbb{F}[x;\sigma,\delta] \mid F(P) = 0, \textrm{ for all } P \in \Omega \}. $$
\end{definition}

Some properties of zero sets and ideals will require the following generalization of set inclusion (we also use it in Corollary \ref{cor monotonicity}, Proposition \ref{prop monotonicity multiplicity I} and Theorem \ref{th P-independence multiplicity seqs}).

\begin{definition} \label{def subseteq leq}
Given sets $ \Psi, \Omega \subseteq \mathbb{F}[x;\sigma, \delta] $, we say that $ \Psi \leq \Omega $ if, for every finite non-empty subset $ \{ Q_1, Q_2, \ldots, $ $ Q_n \} $ $ \subseteq \Psi $ of size $ n $, there exists a subset $ \{ P_1, P_2, \ldots, P_n \} \subseteq \Omega $, also of size $ n $, such that $ Q_i $ divides $ P_i $ on the right, for $ i = 1,2, \ldots, n $.
\end{definition}
%

\begin{example}
Clearly $ \Psi \leq \Omega $ whenever $ \Psi \subseteq \Omega $. However, $ \Psi \leq \Omega $ holds in many other cases. For example, if $ \Psi = \{ (x-a)^r \} $, for $ a \in \mathbb{F} $ and $ r \in \mathbb{Z}_+ $, then $ \Psi \leq \Omega $ for any $ \Omega = \{ (x-a)^s \} $, where $ s \geq r $. The interpretation, in this case, is that $ \Psi $ and $ \Omega $ correspond to the point $ a $ with multiplicities $ r $ and $ s $, and $ \Psi \leq \Omega $ indicates that the same holds for their multiplicities, i.e., $ r \leq s $. More generally, $ \Psi \leq \Omega $ for $ \Psi = \{ (x-a_1)^{r_1}, \ldots, (x-a_n)^{r_n} \} $ and $ \Omega = \{ (x-a_1)^{s_1}, \ldots, (x-a_n)^{s_n} \} $, where $ a_1, a_2, \ldots, a_n \in \mathbb{F} $ and $ r_i ,s_i \in \mathbb{Z}_+ $ are such that $ r_i \leq s_i $, for $ i = 1,2, \ldots, n $, and $ \Omega $ has size $ n $. Note that, in these cases, $ \Psi \subseteq \Omega $ does not hold unless $ r_i = s_i $ for all $ i = 1,2, \ldots, n $.
\end{example}

Throughout the manuscript, given $ F \in \mathbb{F}[x;\sigma,\delta] $, we denote by $ (F) = \{ GF \mid G \in \mathbb{F}[x;\sigma,\delta] \} $ the left ideal in $ \mathbb{F}[x;\sigma,\delta] $ generated by $ F $. An important feature of this abstract framework is that the following basic properties still hold as in classical Algebraic Geometry.

\begin{proposition} \label{prop properties alg geom}
Let $ \Omega, \Omega_1, \Omega_2 \subseteq \mathcal{U} $ and $ A, A_1, A_2 \subseteq \mathbb{F}[x;\sigma,\delta] $ be arbitrary sets. 
\begin{enumerate}
\item
The set $ I(\Omega) $ is a left ideal of $ \mathbb{F}[x;\sigma,\delta] $.
\item
Given $ P \in \mathcal{U} $, it holds that $ I(\{ P \}) = (P) $ and $ P \in Z_\mathcal{U}(\{ P \}) $.
\item 
Assume that if $ P \in \mathcal{U} $, $ Q \in \mathbb{F}[x;\sigma,\delta] $ divides $ P $ on the right and $ \deg(Q) \geq 1 $, then $ Q \in \mathcal{U} $. Then $ Z_\mathcal{U}(\{ P \}) = \{ P \} $ if, and only if, $ P $ is irreducible.
\item
$ I(\varnothing) = (1) $ and $ Z_\mathcal{U}(\{ 1 \}) = \varnothing $.
\item
If $ \Omega_1 \leq \Omega_2 $, then $ I(\Omega_2) \subseteq I(\Omega_1) $.
\item
If $ A_1 \subseteq A_2 $, then $ Z_\mathcal{U}(A_2) \subseteq Z_\mathcal{U}(A_1) $.
\item
$ I(\Omega_1 \cup \Omega_2) = I(\Omega_1) \cap I(\Omega_2) $.
\item
$ Z_\mathcal{U}(A) = Z_\mathcal{U}((A)) $ and $ Z_\mathcal{U}(A_1 \cup A_2) = Z_\mathcal{U}((A_1) + (A_2)) = Z_\mathcal{U}(A_1) \cap Z_\mathcal{U}(A_2) $.
\item
$ \Omega \subseteq Z_\mathcal{U}(I(\Omega)) $ and equality holds if, and only if, $ \Omega = Z_\mathcal{U}(B) $ for some $ B \subseteq \mathbb{F}[x;\sigma,\delta] $.
\item
$ A \subseteq (A) \subseteq I(Z_\mathcal{U}(A)) $ and equality holds if, and only if, $ A = I(\Psi) $ for some $ \Psi \subseteq \mathcal{U} $.
\end{enumerate}
\end{proposition}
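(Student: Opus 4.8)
The plan is to isolate a single structural fact about the right Euclidean domain $ \mathbb{F}[x;\sigma,\delta] $ and then obtain all ten items as formal consequences, devoting the only real effort to item 3. The fact in question is this: for $ F \in \mathbb{F}[x;\sigma,\delta] $ and $ P $ of degree at least $ 1 $, if $ F = QP + F(P) $ is the right Euclidean division (so $ \deg(F(P)) < \deg(P) $), then $ F(P) = 0 $ if and only if $ P $ divides $ F $ on the right, if and only if $ F \in (P) $; moreover, uniqueness of the right remainder makes $ F \mapsto F(P) $ additive, and $ (HF)(P) = 0 $ whenever $ F(P) = 0 $. From this, $ I(\Omega) = \bigcap_{P \in \Omega} (P) $, which gives item 1 (an intersection of left ideals is a left ideal), item 2 ($ I(\{P\}) = (P) $ by the equivalence above, and $ P = 1 \cdot P $ has zero right remainder, with $ P \in \mathcal{U} $ by hypothesis), and item 4 (the empty intersection is $ \mathbb{F}[x;\sigma,\delta] = (1) $, while $ 1(P) = 1 \neq 0 $ for every $ P $ with $ \deg(P) \geq 1 $, so no such $ P $ lies in $ Z_\mathcal{U}(\{1\}) $).

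Next I would dispatch the monotonicity and Galois-connection items. Item 6 is immediate from the definition of $ Z_\mathcal{U} $. For item 5 it is enough to use the singleton case of ``$ \leq $'': given $ Q \in \Omega_1 $, pick $ P \in \Omega_2 $ with $ Q \mid P $ on the right; then $ (P) \subseteq (Q) $, so $ F(P) = 0 $ forces $ F(Q) = 0 $, whence $ I(\Omega_2) \subseteq I(\Omega_1) $. Items 7 and 8 are then routine: $ F $ vanishes on $ \Omega_1 \cup \Omega_2 $ iff it vanishes on each (item 7); $ Z_\mathcal{U}(A) = Z_\mathcal{U}((A)) $ because a point annihilating all of $ A $ annihilates every left combination $ \sum_i H_i F_i $ by additivity and absorption, while the reverse inclusion is item 6; and then $ (A_1 \cup A_2) = (A_1) + (A_2) $ together with the set-theoretic identity $ Z_\mathcal{U}(A_1 \cup A_2) = Z_\mathcal{U}(A_1) \cap Z_\mathcal{U}(A_2) $ closes item 8. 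Items 9 and 10 are the standard consequence of $ I(\cdot) $ and $ Z_\mathcal{U}(\cdot) $ forming an order-reversing Galois pair: the inclusions $ \Omega \subseteq Z_\mathcal{U}(I(\Omega)) $ and $ A \subseteq (A) \subseteq I(Z_\mathcal{U}(A)) $ are immediate from the definitions (the middle one via item 8), and the ``equality iff image'' clauses follow by applying the opposite operator to a known inclusion and sandwiching, using item 1 so that any $ I(\Psi) $ is already a left ideal, i.e.\ $ (I(\Psi)) = I(\Psi) $.

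The main obstacle — and the only item requiring a genuinely ring-theoretic argument rather than formal manipulation — is item 3. Under its extra hypothesis, $ Z_\mathcal{U}(\{P\}) $ is precisely the set of right divisors of $ P $ of degree at least $ 1 $. If $ P $ factors as $ P = GQ $ with $ \deg(G), \deg(Q) \geq 1 $, then $ Q $ is such a divisor and $ \deg(Q) < \deg(P) $, so $ Z_\mathcal{U}(\{P\}) $ properly contains $ \{P\} $; contrapositively, $ Z_\mathcal{U}(\{P\}) = \{P\} $ forces $ P $ to be irreducible. Conversely, if $ P $ is irreducible, any right divisor $ Q $ of $ P $ of degree at least $ 1 $ has a unit complementary factor, so $ Q $ and $ P $ differ by a unit of $ \mathbb{F}^\times $ and in particular $ (Q) = (P) $; since the universal sets in play consist of monic skew polynomials this forces $ Q = P $, giving $ Z_\mathcal{U}(\{P\}) = \{P\} $. (If one does not normalize, the clean statement is that $ Z_\mathcal{U}(\{P\}) $ is a single equivalence class under $ Q \sim cQ $, $ c \in \mathbb{F}^\times $.) I expect the write-up difficulty to lie entirely in keeping the bookkeeping between $ I $ and $ Z_\mathcal{U} $ straight across the ten items and in phrasing item 3 with the right normalization convention.
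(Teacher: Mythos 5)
Your proposal is correct and follows essentially the same route as the paper: everything reduces to the observation that $F(P)=0$ iff $P$ divides $F$ on the right iff $F\in(P)$, so that $I(\Omega)=\bigcap_{P\in\Omega}(P)$, with items 5--10 handled by the same monotonicity and Galois-connection formalities and item 3 by identifying $Z_\mathcal{U}(\{P\})$ with the set of right divisors of $P$ of degree at least $1$. Your explicit caveat that the converse direction of item 3 only yields $\{P\}$ up to unit multiples (so some normalization, e.g.\ monicity, is needed for the literal equality) is a point the paper's own proof leaves implicit, and is worth keeping.
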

\begin{proof}
We prove the following items:
\begin{enumerate}
\item
If $ F \in \mathbb{F}[x;\sigma,\delta] $, $ G,H \in I(\Omega) $ and $ P \in \Omega $, then $ P $ divides $ G $ and $ H $ on the right. Thus $ P $ also divides $ FG $ and $ G+H $ on the right.
\item
$ F \in I(\{ P \}) $ if, and only if, $ P $ divides $ F $ on the right. 
\item
If $ \mathcal{U} $ satisfies such a property, then $ Z_\mathcal{U}(\{ P \}) $ is formed by all the right divisors of $ P $ of degree at least $ 1 $.
\item[5.]
Let $ F \in I(\Omega_2) $ and $ P \in \Omega_1 $. Since $ \Omega_1 \leq \Omega_2 $, then there is $ Q \in \Omega_2 $ such that $ P $ divides $ Q $ on the right. Since $ F \in I(\Omega_2) $, then $ Q $ divides $ F $ on the right, hence $ P $ divides $ F $ on the right and $ F \in I(\Omega_1) $.
\end{enumerate}
The remaining items are trivial from the definitions. Regarding equality in item 9, note that if $ \Omega = Z_\mathcal{U}(B) $, then $ B \subseteq I(Z_\mathcal{U}(B)) = I(\Omega) $ by item 10, thus $ Z_\mathcal{U}(I(\Omega)) \subseteq Z_\mathcal{U}(B) = \Omega $. Since $ \Omega \subseteq Z_\mathcal{U}(I(\Omega)) $, then equality holds. Similarly for the equality in item 10.
\end{proof}

We may now extend Lam and Leroy's concepts of algebraic sets, minimal skew polynomials, P-closed sets, P-independent sets, P-generators and P-bases \cite{lam, algebraic-conjugacy, lam-leroy}.

\begin{definition} [\textbf{Algebraic sets}] \label{def algebraic sets}
We say that a set $ \Omega \subseteq \mathcal{U} $ is algebraic if $ I(\Omega) \neq \{ 0 \} $.
\end{definition}

\begin{definition} [\textbf{Minimal skew polynomial}]
If $ \Omega \subseteq \mathcal{U} $ is algebraic, we define its minimal skew polynomial, denoted $ F_\Omega $, as the unique monic non-zero skew polynomial of minimum possible degree in $ I(\Omega) $. If $ \Omega \subseteq \mathcal{U} $ is not algebraic, we define $ F_\Omega = 0 $.
\end{definition}

For any set $ \Omega \subseteq \mathcal{U} $, the skew polynomial $ F_\Omega $ exists and generates $ I(\Omega) $. 

In order to bound the degree of the minimal skew polynomial in Proposition \ref{prop deg of min skew pol upper bound by sum}, we will use the following relation, see \cite{ore} or \cite[Deg. Eq. 4.1]{lam-leroy-wedI}.

\begin{lemma} [\textbf{\cite{ore}}] \label{lemma ores cross cut formula}
Given skew polynomials $ F,G \in \mathbb{F}[x;\sigma,\delta] $, let $ D, M \in \mathbb{F}[x;\sigma,\delta] $ be their greatest right common divisor and least left common multiple. Then
$$ \deg(D) + \deg(M) = \deg(F) + \deg(G). $$
\end{lemma}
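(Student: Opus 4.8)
The statement to prove is Ore's degree equation (Lemma \ref{lemma ores cross cut formula}): if $D$ is the greatest right common divisor (grcd) and $M$ the least left common multiple (llcm) of $F,G \in \mathbb{F}[x;\sigma,\delta]$, then $\deg(D)+\deg(M)=\deg(F)+\deg(G)$. The plan is to exploit the left-ideal structure of $\mathbb{F}[x;\sigma,\delta]$ together with the fact that, although this ring is only right Euclidean, every \emph{left} ideal is principal (generated by a single skew polynomial of minimal degree, obtained by right division). Concretely, I would first observe that the sum of left ideals $(F)+(G)$ is again a left ideal, hence $(F)+(G) = (D)$ for a unique monic $D$; I would check that this $D$ is exactly the grcd (any common right divisor $E$ of $F,G$ satisfies $(F)+(G)\subseteq (E)$, so $D$ is right-divisible by no, wait --- $(F)\subseteq(E)$ means $E$ divides $F$ on the right; and $(D)\subseteq(E)$ gives that $E$ divides $D$ on the right, so $D$ is a common right divisor of maximal degree and every common right divisor divides it). Dually, the intersection $(F)\cap(G)$ is a left ideal, hence equals $(M)$ for a unique monic $M$, and $M$ is the llcm: $M\in(F)$ and $M\in(G)$ means $F$ and $G$ each divide $M$ on the left, and $(M)\supseteq(N)$ for any common left multiple $N$ shows $M$ divides $N$ on the left.

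With these two principal generators in hand, the degree equation reduces to a dimension count of left $\mathbb{F}$-vector spaces. I would consider the quotient left $\mathbb{F}$-module $\mathbb{F}[x;\sigma,\delta]/(G)$, which has dimension $\deg(G)$ over $\mathbb{F}$ (a monomial basis $1,x,\dots,x^{\deg(G)-1}$ is read off from right Euclidean division by $G$). Inside it, the image of the left ideal $(F)$ is $((F)+(G))/(G) = (D)/(G)$, which has dimension $\deg(G)-\deg(D)$. On the other hand, the left $\mathbb{F}$-linear map $\mathbb{F}[x;\sigma,\delta] \to \mathbb{F}[x;\sigma,\delta]/(G)$, $H \mapsto HF + (G)$, has image $(F)/((F)\cap(G)) = (F)/(M)$ --- here I use that $(F)\cap(G)=(M)$ --- wait, more carefully: restrict the quotient map to the left ideal $(F)$; its image is $((F)+(G))/(G)=(D)/(G)$ and its kernel is $(F)\cap(G)=(M)$. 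Hence $(F)/(M) \cong (D)/(G)$ as left $\mathbb{F}$-modules. Now $(F)/(M)$ has $\mathbb{F}$-dimension $\deg(M)-\deg(F)$ (since $(M)\subseteq(F)$ with $M$ right-divisible by $F$, a monomial-times-$F$ basis gives this count), and $(D)/(G)$ has dimension $\deg(G)-\deg(D)$. Equating the two dimensions yields $\deg(M)-\deg(F)=\deg(G)-\deg(D)$, which rearranges to the claim.

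The main obstacle, and the point requiring the most care, is the bookkeeping of which divisibility is on which side: because $\mathbb{F}[x;\sigma,\delta]$ is right Euclidean, \emph{left} ideals are principal and membership $H\in(F)$ translates to "$F$ divides $H$ on the right," so the grcd shows up as the generator of a \emph{sum} of left ideals while the llcm shows up as the generator of an \emph{intersection}; one must consistently track that $(M)\subseteq (F)$ corresponds to $F$ dividing $M$ on the right (equivalently $M$ being a left multiple of $F$), and that the $\mathbb{F}$-dimension of a quotient $(A)/(B)$ with $(B)\subseteq(A)$ equals $\deg(B)-\deg(A)$. A secondary point is justifying that $(F)\cap(G)$ is nonzero (so that $M\neq 0$): this follows because $\mathbb{F}[x;\sigma,\delta]$ is a domain in which $\mathbb{F}$-dimension is finite in each degree, so two nonzero left ideals of finite codimension must intersect nontrivially --- or, more simply, $FG$-type arguments produce an explicit nonzero common left multiple. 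Once the side-conventions are pinned down, the dimension-counting argument is essentially the non-commutative analogue of the classical $\gcd\cdot\mathrm{lcm}$ identity and goes through verbatim. Alternatively, one can cite that this is the standard "degree equation" in Ore's original paper \cite{ore}, but the module-theoretic proof above is self-contained given only right Euclidean division.
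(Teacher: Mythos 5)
The paper does not prove this lemma at all: it is quoted directly from Ore's 1933 paper (see also the ``degree equation'' in Lam--Leroy's \emph{Wedderburn polynomials over division rings, I}), so there is no in-paper argument to compare against. Your self-contained proof is correct. You correctly use that right Euclidean division makes every left ideal of $\mathbb{F}[x;\sigma,\delta]$ principal, identify $D$ and $M$ via $(F)+(G)=(D)$ and $(F)\cap(G)=(M)$, and then apply the isomorphism $(F)/(M)\cong\bigl((F)+(G)\bigr)/(G)=(D)/(G)$ obtained by restricting the quotient map $\mathcal{R}\to\mathcal{R}/(G)$ to $(F)$; the dimension formula $\dim_{\mathbb{F}}\,(A)/(B)=\deg(B)-\deg(A)$ for $(B)\subseteq(A)$ follows from the left $\mathbb{F}$-isomorphism $(A)\cong\mathcal{R}$, $H\mapsto HA$, exactly as you indicate, and equating the two dimensions gives the claim. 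Two small remarks. First, your parenthetical that ``$FG$-type arguments produce an explicit nonzero common left multiple'' is not right as stated: $FG$ lies in $(G)$ but need not lie in $(F)$; your primary justification (two left ideals of finite codimension in an infinite-dimensional space intersect nontrivially) is the correct one and suffices. Second, the degenerate cases $F=0$ or $G=0$ are not covered by the module argument but hold trivially under the convention $\deg(0)=-\infty$. Your approach is the standard module-theoretic route to the noncommutative $\gcd$--$\mathrm{lcm}$ identity and is a perfectly good replacement for the citation.
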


The first step in extending the concept of P-independence is the following proposition, which extends \cite[Prop. 6]{lam} to high-degree zeros.

\begin{proposition} \label{prop deg of min skew pol upper bound by sum}
Given a non-empty set $ \Omega \subseteq \mathcal{U} $, it holds that
$$ \deg(F_\Omega) \leq \sum_{P \in \Omega} \deg(P). $$
\end{proposition}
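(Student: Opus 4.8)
The plan is to reduce to the case of a finite $ \Omega $ and then to exhibit a single non-zero element of $ I(\Omega) $ — an iterated least left common multiple of the members of $ \Omega $ — whose degree is controlled by Ore's cross-cut formula.

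First I would dispose of the degenerate cases. Every $ P \in \mathcal{U} $ satisfies $ \deg(P) \geq 1 $ by our standing assumption on $ \mathcal{U} $, so if $ \Omega $ is infinite then $ \sum_{P \in \Omega} \deg(P) = + \infty $ and there is nothing to prove; likewise, if $ \Omega $ is not algebraic then $ F_\Omega = 0 $, so $ \deg(F_\Omega) = -\infty $ and the bound is trivial. Hence we may assume $ \Omega = \{ P_1, P_2, \ldots, P_n \} $ is a finite set, and we shall see along the way that such an $ \Omega $ is automatically algebraic.

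Next, set $ M_1 = P_1 $ and, inductively for $ k = 2, \ldots, n $, let $ M_k $ be the least left common multiple of $ M_{k-1} $ and $ P_k $; this exists because $ \mathbb{F}[x;\sigma,\delta] $ is a right Euclidean domain (\cite{ore}, Lemma~\ref{lemma ores cross cut formula}), and each $ M_k $ is non-zero since the ring is a domain. By construction $ P_k $ divides $ M_k $ on the right and $ M_{k-1} $ divides $ M_k $ on the right, so an easy induction shows that every $ P_i $ with $ i \leq k $ divides $ M_k $ on the right. In particular each $ P_i $ divides $ M_n $ on the right, i.e. $ M_n \in I(\{ P_i \}) $ for all $ i $, whence $ M_n \in \bigcap_{i=1}^n I(\{ P_i \}) = I(\Omega) $ by iterating item~7 of Proposition~\ref{prop properties alg geom}. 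Since $ M_n \neq 0 $, the set $ \Omega $ is algebraic and, by minimality of the degree of $ F_\Omega $ among non-zero elements of $ I(\Omega) $, we get $ \deg(F_\Omega) \leq \deg(M_n) $.

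Finally I would bound $ \deg(M_n) $. Applying Lemma~\ref{lemma ores cross cut formula} to the pair $ (M_{k-1}, P_k) $ gives $ \deg(M_k) = \deg(M_{k-1}) + \deg(P_k) - \deg(D_k) \leq \deg(M_{k-1}) + \deg(P_k) $, where $ D_k $ denotes their greatest right common divisor. Telescoping these inequalities from $ k = 2 $ to $ k = n $ yields $ \deg(M_n) \leq \sum_{i=1}^n \deg(P_i) = \sum_{P \in \Omega} \deg(P) $, and combining with $ \deg(F_\Omega) \leq \deg(M_n) $ completes the proof. The essential content is just Ore's cross-cut formula together with the fact that a common left multiple of the $ P_i $ lies in $ I(\Omega) $; the only point requiring care is organizational, namely checking that the iterated least left common multiple $ M_n $ is genuinely a left multiple of each $ P_i $ (handled by the inductive construction) and clearing away the infinite and non-algebraic cases at the outset.
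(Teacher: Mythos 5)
Your proof is correct and follows essentially the same route as the paper: dispose of the infinite and non-algebraic cases trivially, then bound $\deg(F_\Omega)$ via the iterated least left common multiple of the elements of $\Omega$ using Ore's cross-cut formula (Lemma~\ref{lemma ores cross cut formula}) recursively. You simply spell out the induction and the membership $M_n \in I(\Omega)$ more explicitly than the paper does, which is fine.
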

\begin{proof}
First, if $ \Omega $ is not finite, then the right-hand side is infinite (recall that $ \deg(P) \geq 1 $ for all $ P \in \mathcal{U} $), hence the inequality holds. Second, if $ \Omega $ is not algebraic, then $ F_\Omega = 0 $ and $ \deg(F_\Omega) = - \infty $, hence the inequality also holds. Finally assume that $\Omega $ is finite and algebraic. By definition, $ F_\Omega $ is the least left common multiple of the skew polynomials in $ \Omega $. Therefore, the result follows from Lemma \ref{lemma ores cross cut formula} applied recursively.
\end{proof}

We now extend the notion of \textit{P-independence} \cite[Sec. 4]{lam} to high-degree points.

\begin{definition} [\textbf{P-independence}] \label{def P-independence}
$ \Omega \subseteq \mathcal{U} $ is P-independent if it is finite, algebraic and
$$ \deg(F_\Omega) = \sum_{P \in \Omega} \deg(P). $$
We also say that a subset $ \Psi = \{ a_1 , a_2, \ldots, a_n \} \subseteq \mathbb{F} $ is P-independent if the corresponding set $ \{ x-a_1, x-a_2, \ldots, x-a_n \} \subseteq \mathcal{U}_0 $ is P-independent.
\end{definition}

P-independent sets are precisely those for which we may obtain an upper bound on the number of zeros of a skew polynomial by its degree (see also Theorems \ref{th deg bound for nonconj naive case} and \ref{th upper bound multiplicities by degree II}).

\begin{theorem} [\textbf{Degree bound}] \label{th upper bound multiplicities by degree general}
An algebraic set $ \Omega \subseteq \mathcal{U} $ is P-independent if, and only if, for every non-zero $ F \in I(\Omega) $, it holds that
\begin{equation}
\sum_{P \in \Omega} \deg(P) \leq \deg(F).
\label{eq degree bound for proof}
\end{equation}
Moreover, when this is the case, then equality in (\ref{eq degree bound for proof}) is attained by $ F $ if, and only if, $ F = a F_\Omega $, where $ a \in \mathbb{F}^* $.
\end{theorem}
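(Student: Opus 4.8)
The plan is to exploit the fact, recorded right after the definition of the minimal skew polynomial, that for an algebraic $ \Omega $ the left ideal $ I(\Omega) $ is principal, generated by $ F_\Omega $: every $ F \in I(\Omega) $ factors as $ F = G F_\Omega $ with $ G \in \mathbb{F}[x;\sigma,\delta] $, and $ F \neq 0 $ exactly when $ G \neq 0 $. Together with the degree additivity $ \deg(G F_\Omega) = \deg(G) + \deg(F_\Omega) $ that is built into the definition of $ \mathbb{F}[x;\sigma,\delta] $, and the inequality $ \deg(F_\Omega) \leq \sum_{P \in \Omega} \deg(P) $ of Proposition \ref{prop deg of min skew pol upper bound by sum}, this reduces the whole statement to elementary bookkeeping with degrees.

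First I would treat the forward implication. Assuming $ \Omega $ is P-independent, i.e. $ \deg(F_\Omega) = \sum_{P \in \Omega} \deg(P) $, any nonzero $ F \in I(\Omega) $ written as $ F = G F_\Omega $ with $ G \neq 0 $ satisfies $ \deg(F) = \deg(G) + \deg(F_\Omega) \geq \deg(F_\Omega) = \sum_{P \in \Omega} \deg(P) $, which is (\ref{eq degree bound for proof}). The same computation handles the ``moreover'': equality forces $ \deg(G) = 0 $, so $ G = a \in \mathbb{F}^* $ and $ F = a F_\Omega $; conversely, any such $ F $ has $ \deg(F) = \deg(F_\Omega) = \sum_{P \in \Omega} \deg(P) $, attaining equality.

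Next I would treat the converse. Assuming (\ref{eq degree bound for proof}) for all nonzero $ F \in I(\Omega) $, I would apply it to $ F = F_\Omega $ (which is nonzero because $ \Omega $ is algebraic) to get $ \sum_{P \in \Omega} \deg(P) \leq \deg(F_\Omega) < \infty $; since $ \deg(P) \geq 1 $ for each $ P \in \mathcal{U} $, this forces $ \Omega $ to be finite. Then the reverse inequality of Proposition \ref{prop deg of min skew pol upper bound by sum} yields $ \deg(F_\Omega) = \sum_{P \in \Omega} \deg(P) $, and $ \Omega $ is finite and algebraic, hence P-independent.

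I do not expect a genuine obstacle here: the argument is the non-commutative transcription of the classical one, made possible entirely by the principality of $ I(\Omega) $ and by degree additivity. The only point deserving a moment's care is the conclusion that $ \Omega $ is finite in the converse, which follows from $ \deg(P) \geq 1 $ on $ \mathcal{U} $ as indicated above.
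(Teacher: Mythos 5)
Your proposal is correct and follows essentially the same route as the paper: both directions rest on the facts that $F_\Omega$ generates $I(\Omega)$ (so it right-divides every element of $I(\Omega)$), that degrees are additive, and that Proposition \ref{prop deg of min skew pol upper bound by sum} supplies the reverse inequality for the converse. Your treatment is slightly more explicit than the paper's (the paper argues the converse by contraposition and leaves the ``moreover'' and the finiteness of $\Omega$ implicit), but the underlying argument is the same.
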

\begin{proof}
First assume that $ \Omega $ is P-independent. By definition, $ F_\Omega \neq 0 $ and if $ F \in I(\Omega) $, then $ F_\Omega $ divides $ F $ on the right. By definition of P-independence, we conclude that
$$ \sum_{P \in \Omega} \deg(P) = \deg(F_\Omega) \leq \deg(F). $$

Conversely, assume that $ \Omega $ is not P-independent. Then (\ref{eq degree bound for proof}) does not hold for $ F_\Omega \in I(\Omega) $ by definition (note that $ F_\Omega \neq 0 $ since $ \Omega $ is algebraic).
\end{proof}
%
%

We may now define \textit{P-closed sets}.

\begin{definition} [\textbf{P-closed sets}] \label{def P-closed sets}
For a given set $ \Omega \subseteq \mathcal{U} $, we define its P-closure (in $ \mathcal{U} $) as $ \overline{\Omega} = Z_\mathcal{U}(I(\Omega)) $. We say that $ \Omega $ is P-closed (in $ \mathcal{U} $) if $ \overline{\Omega} = \Omega $.
\end{definition}
%
%
%
%

The next concepts to generalize are those of P-generators and P-bases.

\begin{definition} [\textbf{P-bases}] \label{def P-bases}
Let $ \Omega \subseteq \mathcal{U} $ be a P-closed set. We say that $ \mathcal{G} \leq \Omega $ is a set of P-generators for $ \Omega $ if $ \Omega = \overline{\mathcal{G}} $. Finally, we say that $ \mathcal{B} \leq \Omega $ is a P-basis of $ \Omega $ if $ \mathcal{B} $ is P-independent and a set of P-generators of $ \Omega $.
\end{definition}

The following lemma ensures that all P-bases have the same ``size'' (counting degrees). It follows from the definitions and Proposition \ref{prop properties alg geom}.

\begin{lemma} \label{lemma P-generators iff min skew pols equal}
Given a P-closed set $ \Omega \subseteq \mathcal{U} $ and a set $ \mathcal{G} \leq \Omega $, then $ \mathcal{G} $ is a set of P-generators of $ \Omega $ if, and only if, $ F_\mathcal{G} = F_\Omega $. In particular, if $ \mathcal{B} \leq \Omega $ is a P-basis of $ \Omega $, then 
$$ \deg(F_\Omega) = \sum_{P \in \mathcal{B}} \deg(P) . $$ 
The number $ {\rm Rk}(\Omega) = deg(F_\Omega) $ will be called the rank of $ \Omega $.
\end{lemma}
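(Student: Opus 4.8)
The plan is to reduce the stated equivalence to the ideal-theoretic fact that $ \mathcal{G} $ is a set of P-generators of $ \Omega $ if, and only if, $ I(\mathcal{G}) = I(\Omega) $, and then translate this into $ F_\mathcal{G} = F_\Omega $. For the translation, note that $ I(\mathcal{G}) $ is a left ideal by the argument of item 1 of Proposition \ref{prop properties alg geom} (which uses only right-divisibility, hence applies even though $ \mathcal{G} $ need not be contained in $ \mathcal{U} $), so it is principal since $ \mathbb{F}[x;\sigma,\delta] $ is right Euclidean, and $ F_\mathcal{G} $ is its monic minimal-degree generator, unique as such; the same holds for $ I(\Omega) $ and $ F_\Omega $. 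Thus $ I(\mathcal{G}) = I(\Omega) $ if and only if $ F_\mathcal{G} = F_\Omega $. (When a set is not algebraic, its ideal is $ \{0\} $ and its minimal skew polynomial is $ 0 $, and the statements degenerate harmlessly; for the ``in particular'' clause $ \mathcal{B} $ is P-independent and hence algebraic, so $ F_\Omega \neq 0 $ there.)

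For the forward implication, suppose $ \mathcal{G} $ is a set of P-generators, i.e.\ $ Z_\mathcal{U}(I(\mathcal{G})) = \overline{\mathcal{G}} = \Omega $. Applying $ I(\cdot) $ and using item 10 of Proposition \ref{prop properties alg geom} gives $ I(\mathcal{G}) \subseteq I(Z_\mathcal{U}(I(\mathcal{G}))) = I(\Omega) $, while $ \mathcal{G} \leq \Omega $ gives $ I(\Omega) \subseteq I(\mathcal{G}) $ by item 5. Hence $ I(\mathcal{G}) = I(\Omega) $, so $ F_\mathcal{G} = F_\Omega $. Conversely, if $ F_\mathcal{G} = F_\Omega $ then $ I(\mathcal{G}) = (F_\mathcal{G}) = (F_\Omega) = I(\Omega) $, so $ \overline{\mathcal{G}} = Z_\mathcal{U}(I(\mathcal{G})) = Z_\mathcal{U}(I(\Omega)) = \overline{\Omega} = \Omega $ since $ \Omega $ is P-closed; as $ \mathcal{G} \leq \Omega $ by hypothesis, $ \mathcal{G} $ is a set of P-generators of $ \Omega $.

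The ``in particular'' part is then immediate: a P-basis $ \mathcal{B} $ is P-independent and a set of P-generators, so $ F_\mathcal{B} = F_\Omega $ by the equivalence and $ \deg(F_\mathcal{B}) = \sum_{P \in \mathcal{B}} \deg(P) $ by the definition of P-independence, giving $ \deg(F_\Omega) = \sum_{P \in \mathcal{B}} \deg(P) $; since this holds for every P-basis, $ {\rm Rk}(\Omega) = \deg(F_\Omega) $ is well defined. I do not expect a genuine obstacle; the only care needed is to check that items 1, 5 and 10 of Proposition \ref{prop properties alg geom} apply to $ \mathcal{G} $, which satisfies only $ \mathcal{G} \leq \Omega $ rather than $ \mathcal{G} \subseteq \mathcal{U} $, and this is fine since their proofs rely on right-divisibility alone.
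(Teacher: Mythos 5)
Your proof is correct and follows exactly the route the paper intends: the paper gives no written proof, stating only that the lemma ``follows from the definitions and Proposition \ref{prop properties alg geom},'' and your argument is precisely the fleshed-out version of that, reducing P-generation to $I(\mathcal{G}) = I(\Omega)$ via items 5 and 10 and then to $F_\mathcal{G} = F_\Omega$ via principality of left ideals. Your side remark that the relevant items apply to $\mathcal{G}$ even though only $\mathcal{G} \leq \Omega$ (not $\mathcal{G} \subseteq \mathcal{U}$) is assumed is a worthwhile point of care that the paper glosses over.
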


We next obtain a general Lagrange interpolation theorem. This result will be a prequel for a Hermite interpolation theorem (see Theorems \ref{th hermite interpolation nonconj naive case} and \ref{th hermite interpolation II}). 

\begin{theorem} [\textbf{Lagrange interpolation}] \label{th lagrange interpolation}
Let $ \Omega \subseteq \mathcal{U} $ be a finite algebraic set given by $ \Omega = \{ P_1, P_2, \ldots, P_n \} $, of size $ n $, and define $ N = \deg(F_\Omega) $. The following are equivalent:
\begin{enumerate}
\item
$ \Omega $ is P-independent.
\item
The left $ \mathbb{F} $-linear map
$$ \phi : \mathbb{F}[x;\sigma, \delta]_N \longrightarrow \prod_{i=1}^n \frac{\mathbb{F}[x;\sigma,\delta]}{(P_i)} $$
given by $ \phi (F) = \left( F(P_i) \right)_{i=1}^n $, for $ F \in \mathbb{F}[x;\sigma, \delta]_N $, is a left vector space isomorphism.
\item
The left $ \mathbb{F} $-linear map
$$ \psi : \mathbb{F}[x;\sigma, \delta] \longrightarrow \prod_{i=1}^n \frac{\mathbb{F}[x;\sigma,\delta]}{(P_i)} $$
given by $ \psi (F) = \left( F(P_i) \right)_{i=1}^n $, for $ F \in \mathbb{F}[x;\sigma, \delta] $, is surjective.
\end{enumerate}
Here, $ \mathbb{F}[x;\sigma, \delta]_N = \{ F \in \mathbb{F}[x;\sigma, \delta] \mid \deg(F) < N \} $ and $ \frac{\mathbb{F}[x;\sigma,\delta]}{(P)} $ denotes the left quotient module of $ \mathbb{F}[x;\sigma,\delta] $ by the left ideal $ (P) $, for $ P \in \mathbb{F}[x;\sigma,\delta] $.
\end{theorem}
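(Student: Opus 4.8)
The plan is to turn the statement into elementary left-$\mathbb{F}$-linear algebra. First I would record the relevant dimensions: $ \mathbb{F}[x;\sigma,\delta]_N $ has left $ \mathbb{F} $-dimension $ N $, with basis $ 1, x, \ldots, x^{N-1} $; and for each $ P \in \mathbb{F}[x;\sigma,\delta] $ with $ \deg(P) = d \geq 1 $, right Euclidean division by $ P $ provides a unique representative of degree $ < d $ in each class, so $ \mathbb{F}[x;\sigma,\delta]/(P) $ has left $ \mathbb{F} $-dimension $ d $. Hence the codomain $ \prod_{i=1}^n \mathbb{F}[x;\sigma,\delta]/(P_i) $ has left $ \mathbb{F} $-dimension $ \sum_{i=1}^n \deg(P_i) $. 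I would also check that evaluation is genuinely left $ \mathbb{F} $-linear: if $ F = GP + R $ with $ \deg(R) < \deg(P) $, then for $ a \in \mathbb{F} $ we have $ aF = (aG)P + aR $ with $ \deg(aR) < \deg(P) $, so $ (aF)(P) = a\, F(P) $, and additivity is similar; thus $ \phi $ and $ \psi $ are left $ \mathbb{F} $-linear, and $ \phi = \psi|_{\mathbb{F}[x;\sigma,\delta]_N} $ since reducing an element of degree $ < N $ modulo each $ (P_i) $ just takes the remainder.

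The crux is that $ \phi $ is injective for free, using nothing but the minimality of $ F_\Omega $: $ F(P_i) = 0 $ means $ P_i $ divides $ F $ on the right, so $ \ker(\phi) $ consists exactly of the elements of $ I(\Omega) $ of degree $ < N = \deg(F_\Omega) $, which is $ \{ 0 \} $ by definition of $ F_\Omega $ (here $ F_\Omega \neq 0 $ because $ \Omega $ is algebraic). Next I would show the images of $ \phi $ and $ \psi $ coincide. Since $ F_\Omega $ generates $ I(\Omega) $, we have $ \ker(\psi) = I(\Omega) = (F_\Omega) $; writing any $ F \in \mathbb{F}[x;\sigma,\delta] $ as $ F = G F_\Omega + R $ with $ \deg(R) < N $ gives $ \psi(F) = \psi(R) = \phi(R) $, hence $ \operatorname{im}(\psi) = \operatorname{im}(\phi) $. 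Therefore item 2 and item 3 are equivalent: since $ \phi $ is always injective, $ \phi $ is an isomorphism iff $ \phi $ is surjective iff $ \psi $ is surjective.

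Finally I would connect this to P-independence. As $ \phi $ is an injective left $ \mathbb{F} $-linear map between finite-dimensional spaces, it is an isomorphism if and only if the two dimensions agree, that is, iff $ N = \sum_{i=1}^n \deg(P_i) $. Because $ \Omega = \{ P_1, \ldots, P_n \} $ has size $ n $, this is exactly the condition $ \deg(F_\Omega) = \sum_{P \in \Omega} \deg(P) $, which by Definition~\ref{def P-independence} (and Proposition~\ref{prop deg of min skew pol upper bound by sum}, giving the reverse inequality automatically) means precisely that $ \Omega $ is P-independent. This closes the cycle $ 1 \Leftrightarrow 2 \Leftrightarrow 3 $. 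I do not expect a genuine obstacle here: the proof is a dimension count, and the only points requiring care are the left-sided conventions (evaluation being left $ \mathbb{F} $-linear, consistent use of right division) and the bookkeeping that $ \Omega $ having size $ n $ makes $ \sum_{P \in \Omega}\deg(P) = \sum_{i=1}^n \deg(P_i) $.
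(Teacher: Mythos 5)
Your proposal is correct and follows essentially the same route as the paper: injectivity of $\phi$ from the minimality of $F_\Omega$, a dimension count for $1\Leftrightarrow 2$, and reduction modulo $F_\Omega$ to identify $\operatorname{im}(\psi)$ with $\operatorname{im}(\phi)$ for $2\Leftrightarrow 3$. The only difference is that you spell out the linearity and dimension bookkeeping that the paper leaves implicit.
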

\begin{proof}
Before proving the corresponding implications, we show that the map $ \phi $ is always injective. Let $ F \in \mathbb{F}[x;\sigma,\delta]_N $ and assume that $ \phi(F) = 0 $. Then $ F \in I(\Omega) $, and therefore, $ F_\Omega $ divides $ F $ on the right. If $ F \neq 0 $, then we would have that $ \deg(F) \geq \deg(F_\Omega) = N $, which is absurd since $ F \in \mathbb{F}[x;\sigma,\delta]_N $. Thus $ F = 0 $ and $ \phi $ is injective. 

We now prove the following equivalences:

$ 1. \Longleftrightarrow 2. $: By counting dimensions, it is trivial that item 2 implies item 1. Assume now that item 1 holds, that is, $ \Omega $ is P-independent. Then the domain and co-domain of $ \phi $ have the same dimension on the left over $ \mathbb{F} $. Since $ \phi $ is always injective, we conclude that $ \phi $ is a left vector space isomorphism.

$ 2. \Longleftrightarrow 3. $: It is trivial that item 2 implies item 3. Assume now that item 3 holds. Since $ \phi $ is always injective, we only need to show that, given $ F \in \mathbb{F}[x;\sigma,\delta] $, there exists $ G \in \mathbb{F}[x;\sigma,\delta]_N $ such that $ F(P_i) = G(P_i) $, for $ i = 1,2, \ldots, n $. To achieve this, let $ G $ be the remainder of $ F $ divided by $ F_\Omega $ on the right (note that $ F_\Omega \neq 0 $ since $ \Omega $ is algebraic).
\end{proof}
%
%

We now deduce the following monotonicity property of P-independence, which will be useful later (see Proposition \ref{prop monotonicity multiplicity I} and Theorem \ref{th P-independence multiplicity seqs}).

\begin{corollary} [\textbf{Monotonicity}] \label{cor monotonicity}
Let $ \Psi, \Omega \subseteq \mathcal{U} $ be finite non-empty sets such that $ \Psi \leq \Omega $. If $ \Omega $ is P-independent, then so is $ \Psi $.
\end{corollary}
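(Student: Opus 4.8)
The plan is to deduce P-independence of $ \Psi $ from the surjectivity criterion of Theorem~\ref{th lagrange interpolation}, exploiting that a right divisor induces a canonical surjection between the associated left quotient modules.

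First I would unpack the hypothesis $ \Psi \leq \Omega $. Since $ \Psi $ is finite and non-empty, it is itself a finite non-empty subset of $ \Psi $ of size $ n := |\Psi| $; applying Definition~\ref{def subseteq leq} gives pairwise distinct $ P_1,\dots,P_n \in \Omega $ such that, writing $ \Psi = \{Q_1,\dots,Q_n\} $, the skew polynomial $ Q_i $ divides $ P_i $ on the right for each $ i $. Consequently $ (P_i) \subseteq (Q_i) $, so
\[ I(\Omega) \subseteq \bigcap_{i=1}^n (P_i) \subseteq \bigcap_{i=1}^n (Q_i) = I(\Psi). \]
As $ \Omega $ is P-independent it is algebraic, i.e. $ I(\Omega) \neq \{0\} $; hence $ I(\Psi) \neq \{0\} $, so $ \Psi $ is finite and algebraic, which are the standing hypotheses needed to invoke Theorem~\ref{th lagrange interpolation} for $ \Psi $.

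Now, since $ \Omega $ is a finite, algebraic, P-independent set, Theorem~\ref{th lagrange interpolation} tells us the evaluation map $ \psi_\Omega \colon \mathbb{F}[x;\sigma,\delta] \to \prod_{P\in\Omega} \mathbb{F}[x;\sigma,\delta]/(P) $, $ F \mapsto (F(P))_{P\in\Omega} $, is surjective. Composing $ \psi_\Omega $ with the coordinate projection onto the factors indexed by $ P_1,\dots,P_n $, and then with the product of the canonical surjections $ \mathbb{F}[x;\sigma,\delta]/(P_i) \twoheadrightarrow \mathbb{F}[x;\sigma,\delta]/(Q_i) $ (legitimate because $ (P_i)\subseteq(Q_i) $), yields a surjective left $ \mathbb{F} $-linear map $ \mathbb{F}[x;\sigma,\delta] \to \prod_{i=1}^n \mathbb{F}[x;\sigma,\delta]/(Q_i) $. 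The one thing to verify is that this composite is exactly the evaluation map $ \psi_\Psi \colon F \mapsto (F(Q_i))_{i=1}^n $ of Theorem~\ref{th lagrange interpolation}: under the identification of $ \mathbb{F}[x;\sigma,\delta]/(Q) $ with the polynomials of degree $ <\deg(Q) $ via right Euclidean division, $ F \mapsto F(Q) $ is just the quotient map, and since $ F(P_i) \equiv F \pmod{(P_i)} $ and $ (P_i) \subseteq (Q_i) $ we have $ F(P_i) \equiv F \pmod{(Q_i)} $, so $ F(P_i) $ reduces modulo $ Q_i $ to $ F(Q_i) $. Thus $ \psi_\Psi $ is surjective, and Theorem~\ref{th lagrange interpolation} applied to $ \Psi $ gives that $ \Psi $ is P-independent.

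I do not expect a serious obstacle here; the proof is a formal chase once the right picture is in place. The only points requiring a little care are: (i) the bookkeeping of Definition~\ref{def subseteq leq}, i.e. that all elements of $ \Psi $ can be matched \emph{simultaneously} and injectively to right-multiples in $ \Omega $ — which is exactly why finiteness of $ \Psi $ is used; and (ii) recognizing that Lam--Leroy evaluation coincides with reduction modulo the corresponding left ideal, so that right divisibility genuinely produces the needed surjections of modules. As an alternative route one could argue purely with degrees: first show every subset of a P-independent set is P-independent, using $ I(\Omega_1\cup\Omega_2)=I(\Omega_1)\cap I(\Omega_2) $ from Proposition~\ref{prop properties alg geom} together with Lemma~\ref{lemma ores cross cut formula}, and then reduce the $ P_i $ to the $ Q_i $ one degree at a time via the identity $ \mathrm{lclm}(AB,CB)=\mathrm{lclm}(A,C)B $ and Proposition~\ref{prop deg of min skew pol upper bound by sum}; but the interpolation argument above is shorter.
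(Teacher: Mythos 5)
Your proof is correct and follows essentially the same route as the paper: unpack $\Psi \leq \Omega$ into right-divisibilities $Q_i \mid P_i$, deduce $(P_i) \subseteq (Q_i)$ and hence a surjection $\prod_i \mathbb{F}[x;\sigma,\delta]/(P_i) \twoheadrightarrow \prod_i \mathbb{F}[x;\sigma,\delta]/(Q_i)$, and conclude via the surjectivity criterion (item 3) of Theorem~\ref{th lagrange interpolation}. Your write-up merely makes explicit two points the paper leaves implicit, namely the coordinate projection onto the factors indexed by $P_1,\dots,P_n$ and the compatibility of Lam--Leroy evaluation with reduction modulo the left ideals.
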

\begin{proof}
First observe that, if $ \Omega $ is algebraic, then so is $ \Psi $ by Proposition \ref{prop properties alg geom}. By Definition \ref{def subseteq leq}, if we set $ \Psi = \{ Q_1, Q_2, \ldots, Q_n \} $ of size $ n $, then there exists a subset $ \{ P_1, P_2, \ldots, P_n \} \subseteq \Omega $ of size $ n $, such that $ Q_i $ divides $ P_i $ on the right, for $ i = 1,2, \ldots, n $. In particular, we have that $ (P_i) \subseteq (Q_i) $, for $ i = 1,2, \ldots, n $. Therefore, we have a natural surjective map
$$ \prod_{i=1}^n \frac{\mathbb{F}[x;\sigma, \delta]}{(P_i)} \longrightarrow \prod_{i=1}^n \frac{\mathbb{F}[x;\sigma, \delta]}{(Q_i)} . $$
Hence if $ \Omega $ is P-independent, then so is $ \Psi $ by item 3 in Theorem \ref{th lagrange interpolation}.
\end{proof}
%
%
%
%

\section{Hasse derivatives of skew polynomials} \label{sec hasse derivatives}

Throughout this manuscript, given $ r \in \mathbb{Z}_+ $ and a sequence $ \mathbf{a} = (a_1, a_2, \ldots , a_r) \in \mathbb{F}^r $, we define its associated skew polynomial as
\begin{equation}
P_\mathbf{a} = (x-a_r) \cdots (x-a_2) (x - a_1) \in \mathbb{F}[x;\sigma,\delta] .
\label{eq def multiplicity seq polynomial}
\end{equation}

In this section, we introduce a skew polynomial extension of Hasse derivatives \cite{hasse}. The definition is given so that a point is a zero of a skew polynomial with multiplicity $ r \in \mathbb{Z}_+ $ if, and only if, the corresponding first $ r $ consecutive Hasse derivatives of the skew polynomial at that point are zero. This is not the case with previous notions of derivatives of skew polynomials \cite{bolotnikov-conf, eric}  if $ \mathbb{F} $ has positive characteristic.
%

\begin{definition} [\textbf{Hasse derivative}] \label{def derivative}
Given $ F \in \mathbb{F}[x;\sigma,\delta] $, $ r \in \mathbb{Z}_+ $, and $ \mathbf{a} = (a_1, a_2, $ $ \ldots, a_r) \in \mathbb{F}^r $, we define the corresponding Hasse derivative of order $ r $ as the unique $ D_{\mathbf{a}} (F) \in \mathbb{F} $ such that there exist $ G, H \in \mathbb{F}[x;\sigma,\delta] $, where $ H $ is monic of degree $ r-1 $ and $ F = G P_{\mathbf{a}} + D_{\mathbf{a}} (F) H $, for $ P_{\mathbf{a}} \in \mathbb{F}[x;\sigma,\delta] $ as in (\ref{eq def multiplicity seq polynomial}). 
\end{definition}

The existence and uniqueness of Hasse derivatives, and the following proposition, follow directly from right Euclidean division.
%

\begin{proposition} [\textbf{Taylor expansion}] \label{prop taylor expansion}
Let $ r \in \mathbb{Z}_+ $ and $ \mathbf{a} = (a_1, a_2, \ldots, a_r) \in \mathbb{F}^r $. For every skew polynomial $ F \in \mathbb{F}[x;\sigma,\delta] $, there exists $ G \in \mathbb{F}[x;\sigma,\delta] $ such that
$$ F = G P_{\mathbf{a}_r} + D_{\mathbf{a}_{r}} (F) P_{\mathbf{a}_{r-1}} + \cdots + D_{\mathbf{a}_{2}} (F) P_{\mathbf{a}_1} + D_{\mathbf{a}_1} (F) , $$ 
where $ \mathbf{a}_i = (a_1, a_2, \ldots, a_i) \in \mathbb{F}^i $ and $ P_{\mathbf{a}_i} \in \mathbb{F}[x;\sigma,\delta] $ is as in (\ref{eq def multiplicity seq polynomial}), for $ i = 1,2, \ldots, r $. In particular, we have that $ D_{\mathbf{a}_1}(F) = F(a_1) \in \mathbb{F} $ (see Definition \ref{def evaluation high degree}).
%
\end{proposition}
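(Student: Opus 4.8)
The plan is to proceed by induction on $ r $, peeling off one linear factor at a time from the right. For the base case $ r = 1 $, Definition \ref{def derivative} with $ r = 1 $ says precisely that $ F = G P_{\mathbf{a}_1} + D_{\mathbf{a}_1}(F) H $ with $ H $ monic of degree $ 0 $, i.e. $ H = 1 $; since $ P_{\mathbf{a}_1} = x - a_1 $, this is just right Euclidean division of $ F $ by $ x - a_1 $, so the remainder $ D_{\mathbf{a}_1}(F) $ coincides with $ F(a_1) $ by Definition \ref{def evaluation high degree}. This simultaneously verifies the claimed expansion for $ r = 1 $ and the final assertion $ D_{\mathbf{a}_1}(F) = F(a_1) $.

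For the inductive step, suppose the expansion holds for $ r - 1 $ applied to the truncated sequence $ \mathbf{a}_{r-1} = (a_1, \ldots, a_{r-1}) $, for every skew polynomial. Given $ F $, first apply Definition \ref{def derivative} at order $ r $: write $ F = G_0 P_{\mathbf{a}_r} + D_{\mathbf{a}_r}(F) H $, where $ H $ is monic of degree $ r - 1 $. Here I would observe that the natural candidate for $ H $ is $ P_{\mathbf{a}_{r-1}} = (x - a_{r-1}) \cdots (x - a_1) $, which is indeed monic of degree $ r - 1 $; one checks that the pair $ (G_0, D_{\mathbf{a}_r}(F)) $ obtained from right Euclidean division of $ F $ by $ P_{\mathbf{a}_r} $, followed by reading off the leading coefficient of the degree-$(r-1)$ remainder, realizes Definition \ref{def derivative} with this choice of $ H $. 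So far this gives
\begin{equation*}
F = G_0 P_{\mathbf{a}_r} + D_{\mathbf{a}_r}(F) P_{\mathbf{a}_{r-1}} + \big( \text{remainder of degree} < r - 1 \big).
\end{equation*}
Now apply the induction hypothesis to the remainder term $ F' := F - G_0 P_{\mathbf{a}_r} - D_{\mathbf{a}_r}(F) P_{\mathbf{a}_{r-1}} $, which has degree $ < r - 1 $: it expands as $ F' = G' P_{\mathbf{a}_{r-1}} + D_{\mathbf{a}_{r-1}}(F') P_{\mathbf{a}_{r-2}} + \cdots + D_{\mathbf{a}_1}(F') $, and since $ \deg(F') < r - 1 = \deg(P_{\mathbf{a}_{r-1}}) $ the quotient $ G' $ must be $ 0 $. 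Substituting back and collecting terms yields the full expansion, provided one verifies that $ D_{\mathbf{a}_i}(F') = D_{\mathbf{a}_i}(F) $ for $ i = 1, \ldots, r - 1 $; this holds because $ F $ and $ F' $ differ by a left multiple of $ P_{\mathbf{a}_{r-1}} $ (namely $ G_0 P_{\mathbf{a}_r} + D_{\mathbf{a}_r}(F) P_{\mathbf{a}_{r-1}} $, and $ P_{\mathbf{a}_r}$ is itself a left multiple of $P_{\mathbf{a}_{r-1}}$ only if... ) — more carefully, $ D_{\mathbf{a}_i}$ depends only on the class of its argument modulo the relevant left ideal, and both $ P_{\mathbf{a}_r} $ and $ P_{\mathbf{a}_{r-1}} $ lie in $ I(\{P_{\mathbf{a}_i}\}) $ for $ i \le r-1 $ after the appropriate reduction.

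The main obstacle I anticipate is the bookkeeping around why the Hasse derivatives $ D_{\mathbf{a}_i}(F) $ are unchanged when one subtracts off the higher-order terms, i.e. that Definition \ref{def derivative} is genuinely "stable" under adding left multiples of $ P_{\mathbf{a}_j} $ for $ j \geq i $. The cleanest way around this is to not treat the $ D_{\mathbf{a}_i}(F) $ as computed independently, but rather to define the entire expansion at once by iterated right division — divide $ F $ by $ x - a_1 $ to get remainder $ D_{\mathbf{a}_1}(F) $ and quotient $ F_1 $; divide $ F_1 $ by $ x - a_2 $ to get remainder $ D_{\mathbf{a}_2}(F) $ and quotient $ F_2 $; and so on — then reassemble and check the result matches the $ G, H $ characterization of Definition \ref{def derivative} at each order by a degree count. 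Since the proposition explicitly says the result "follows directly from right Euclidean division," this iterated-division viewpoint is presumably the intended one, and it sidesteps the stability issue entirely because each $ D_{\mathbf{a}_i}(F) $ is then manifestly a remainder in a concrete division chain. I would present the argument in this second form, using the first (inductive) form only as motivation.
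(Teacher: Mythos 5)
Your proposal is correct and takes essentially the same approach as the paper: the paper offers no written proof beyond the remark that the proposition ``follows directly from right Euclidean division,'' and the iterated-division argument you settle on in your final paragraph is exactly that intended argument. The inductive version in your first two paragraphs also works, since $ F - F' $ is a left multiple of $ P_{\mathbf{a}_i} $ for every $ i \leq r-1 $, which resolves the stability question you raise.
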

%

We conclude with the corresponding \textit{confluent Vandermonde matrices}. 

\begin{definition} [\textbf{Confluent Vandermonde matrices}] \label{def confluent vandermonde}
Let $ n, r_i \in \mathbb{Z}_+ $ and $ \mathbf{a}_i = (a_{i,1}, a_{i,2}, $ $ \ldots , $ $ a_{i,r_i}) $ $ \in \mathbb{F}^{r_i} $, and define $ \mathbf{a}_{i,j} = (a_{i,1}, a_{i,2} , \ldots, a_{i,j}) \in \mathbb{F}^j $, for $ j = 1,2, \ldots r_i $ and $ i = 1,2, \ldots, n $. Given $ N \in \mathbb{Z}_+ $, define the confluent Vandermonde matrix of order $ N $ on $ \mathbf{a}_i $ as the $ N \times r_i $ matrix with entries in $ \mathbb{F} $ given by
$$ V_N (\mathbf{a}_i) = \left( \begin{array}{cccc}
D_{\mathbf{a}_{i,1}} (1) & D_{\mathbf{a}_{i,2}} (1) & \ldots & D_{\mathbf{a}_{i,r_i}} (1) \\
D_{\mathbf{a}_{i,1}} (x) & D_{\mathbf{a}_{i,2}} (x) & \ldots & D_{\mathbf{a}_{i,r_i}} (x) \\
\vdots & \vdots & \ddots & \vdots \\
D_{\mathbf{a}_{i,1}} (x^{N-1}) & D_{\mathbf{a}_{i,2}} (x^{N-1}) & \ldots & D_{\mathbf{a}_{i,r_i}} (x^{N-1}) \\
\end{array} \right) , $$
for $ i = 1,2, \ldots, n $. Finally, we define the confluent Vandermonde matrix of order $ N $ on $ \mathbf{a}_1, \mathbf{a}_2, \ldots, $ $ \mathbf{a}_n $ as the $ N \times (\sum_{i=1}^n r_i) $ matrix with entries in $ \mathbb{F} $ given by 
$$ V_N(\mathbf{a}_1, \mathbf{a}_2, \ldots, \mathbf{a}_n) = \left( V_N (\mathbf{a}_1), V_N (\mathbf{a}_2), \ldots, V_N (\mathbf{a}_n) \right) . $$
\end{definition}

Classical confluent Vandermonde matrices \cite{kalman} are recovered from Definition \ref{def confluent vandermonde} when $ \sigma = {\rm Id} $, $ \delta = 0 $ and $ a_{i,1} = a_{i,2} = \ldots = a_{i,r_i} $, for $ i = 1,2, \ldots, n $. Skew polynomial Vandermonde matrices \cite{lam, lam-leroy} are recovered from Definition \ref{def confluent vandermonde} when $ r_1 = r_2 = \ldots = r_n = 1 $. Finally, quaternionic confluent Vandermonde matrices \cite{bolotnikov-conf} are recovered from Definition \ref{def confluent vandermonde} when $ \mathbb{F} $ denotes the quaternions, $ \sigma = {\rm Id} $, $ \delta = 0 $ and $ \mathbf{a}_i $ is a spherical chain (what we will call a multiplicity sequence in Definition \ref{def multiplicity seq}), for $ i = 1,2, \ldots, n $. 

\section{Multiplicities from powers of a linear polynomial} \label{sec multi from powers}

In this section, we provide a first definition of multiplicities of zeros of skew polynomials (similar to \cite{eric, gentili-zero, johnson-thesis}). We consider a different definition in Sections \ref{sec multi from linear terms}, \ref{sec multiplicity seqs} and \ref{sec particular cases}. 
%
%

\begin{definition} [\textbf{Multiplicity I}] \label{def multiplicity I}
Given $ F \in \mathbb{F}[x;\sigma,\delta] $, $ a \in \mathbb{F} $ and $ r \in \mathbb{Z}_+ $, we say that $ a $ is a zero of $ F $ of multiplicity $ r $ if $ (x-a)^r \in \mathbb{F}[x;\sigma,\delta] $ divides $ F $ on the right.
\end{definition}

We may now characterize multiplicities (as in Definition \ref{def multiplicity I}) in terms of Hasse derivatives (Definition \ref{def derivative}). This result follows from Proposition \ref{prop taylor expansion}. 

\begin{proposition} [\textbf{Derivative criterion I}] \label{prop charact multiplicities derivatives I}
Given $ F \in \mathbb{F}[x;\sigma,\delta] $ and $ a \in \mathbb{F} $, denote by $ F^{(i)} (a) = D_{\mathbf{a}_{i+1}}(F) \in \mathbb{F} $ the Hasse derivative (Definition \ref{def derivative}) of $ F $ at $ \mathbf{a}_{i+1} = (a,a,\ldots, a) \in \mathbb{F}^{i+1} $, for $ i \in \mathbb{N} $. Then $ a $ is a zero of $ F $ of multiplicity $ r \in \mathbb{Z}_+ $ (according to Definition \ref{def multiplicity I}) if, and only if, $ F^{(0)}(a) = F^{(1)}(a) = \ldots = F^{(r-1)}(a) = 0 $.
\end{proposition}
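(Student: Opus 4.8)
The plan is to reduce everything to the Taylor expansion of Proposition \ref{prop taylor expansion} specialized to the constant sequence $ \mathbf{a} = (a,a,\ldots,a) \in \mathbb{F}^r $. First I would observe that for this choice the associated polynomials collapse: $ P_{\mathbf{a}_i} = (x-a)^i $ for every $ i $, directly from \eqref{eq def multiplicity seq polynomial}, and by definition of the notation in the statement, $ D_{\mathbf{a}_i}(F) = F^{(i-1)}(a) $. Substituting these into the Taylor expansion produces a $ G \in \mathbb{F}[x;\sigma,\delta] $ with
$$ F = G\,(x-a)^r + \sum_{j=0}^{r-1} F^{(j)}(a)\,(x-a)^j . $$

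Next I would invoke uniqueness of right Euclidean division. Since $ \deg\!\left( \sum_{j=0}^{r-1} F^{(j)}(a)(x-a)^j \right) < r = \deg\left((x-a)^r\right) $, the displayed identity is exactly the right Euclidean division of $ F $ by $ (x-a)^r $, so the second summand is the remainder, i.e. it equals $ F\!\left((x-a)^r\right) $ in the sense of Definition \ref{def evaluation high degree}. Hence $ (x-a)^r $ divides $ F $ on the right if, and only if, $ \sum_{j=0}^{r-1} F^{(j)}(a)(x-a)^j = 0 $.

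Finally I would note that $ \{ 1, x-a, (x-a)^2, \ldots, (x-a)^{r-1} \} $ is a left $ \mathbb{F} $-basis of $ \mathbb{F}[x;\sigma,\delta]_r $, since these skew polynomials are monic of pairwise distinct degrees $ 0,1,\ldots,r-1 $. Therefore the remainder above vanishes if, and only if, all of its coefficients $ F^{(j)}(a) $ vanish, for $ j = 0,1,\ldots,r-1 $. Chaining this with the previous equivalence and Definition \ref{def multiplicity I} gives the claim.

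There is no genuinely hard step here: the argument is a specialization of the already-established Taylor expansion. The only points that require a little care are verifying that $ P_{\mathbf{a}_i} $ really collapses to $ (x-a)^i $ for the constant sequence, and that the right remainder of a Euclidean division is characterized (up to the identity displayed) purely by its degree being smaller than the divisor's — both immediate from \eqref{eq def multiplicity seq polynomial} and from Ore's right Euclidean property of $ \mathbb{F}[x;\sigma,\delta] $.
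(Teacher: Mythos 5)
Your argument is correct and is precisely the route the paper intends: the paper gives no explicit proof, stating only that the result ``follows from Proposition \ref{prop taylor expansion}'', and your write-up supplies exactly those details (specializing the Taylor expansion to the constant sequence, identifying the tail as the right Euclidean remainder modulo $(x-a)^r$, and using that $1, x-a, \ldots, (x-a)^{r-1}$ form a left $\mathbb{F}$-basis of the skew polynomials of degree less than $r$). Nothing is missing.
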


We now use the general framework from Section \ref{sec high-degree zeros} for the type of multiplicities from Definition \ref{def multiplicity I}. To this end, we consider the universal set $ \mathcal{U}_1 $ from (\ref{eq universal set I}). 

By Theorems \ref{th upper bound multiplicities by degree general} and \ref{th lagrange interpolation}, we see that bounding the number of zeros with multiplicity and Hermite interpolation are equivalent to considering P-independent subsets of $ \mathcal{U}_1 $. Therefore, the real challenge is to identify such P-independent subsets. A first result is the particular case of monotonicity (Corollary \ref{cor monotonicity}) applied to $ \mathcal{U}_1 $. 

\begin{proposition} [\textbf{Monotonicity}] \label{prop monotonicity multiplicity I}
Let $ \Omega = \{ (x-a_1)^{r_1}, (x-a_2)^{r_2}, \ldots, (x-a_n)^{r_n} \} \subseteq \mathbb{F}[x;\sigma,\delta] $ be of size $ n \in \mathbb{Z}_+ $, where $ a_1,a_2,\ldots, a_n \in \mathbb{F} $ and $ r_1, r_2,\ldots, r_n \in \mathbb{Z}_+ $. 
\begin{enumerate}
\item
If $ \Omega $ is P-independent, then so is $ \Psi = \{ (x-a_1)^{s_1}, (x-a_2)^{s_2}, \ldots, (x-a_n)^{s_n} \} $ $ \subseteq \mathbb{F}[x;\sigma,\delta] $, for all $ s_1, s_2, \ldots, s_n \in \mathbb{Z}_+ $ such that $ s_i \leq r_i $, for $ i = 1,2, \ldots, n $. 
\item
If $ \Omega $ is P-independent, then the underlying set $ \Psi = \{ a_1, a_2, \ldots, a_n \} $ $ \subseteq \mathbb{F} $ has size $ n $ and is P-independent.
\end{enumerate}
\end{proposition}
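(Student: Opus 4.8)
The plan is to derive both items as consequences of the monotonicity corollary (Corollary \ref{cor monotonicity}), since both involve passing from the P-independent set $\Omega$ to a ``smaller'' set in the sense of Definition \ref{def subseteq leq}. The only work is to verify the relation $\leq$ in each case and, for item 2, to rule out collisions in the underlying set.

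For item 1, I would first note that $(x-a_i)^{s_i}$ divides $(x-a_i)^{r_i}$ on the right whenever $s_i \leq r_i$. Hence, choosing the obvious bijection between $\Psi$ and $\Omega$ that sends $(x-a_i)^{s_i}$ to $(x-a_i)^{r_i}$, every finite subset of $\Psi$ is dominated (right-divisibility-wise) by the corresponding subset of $\Omega$; that is, $\Psi \leq \Omega$ in the sense of Definition \ref{def subseteq leq}. One subtlety to address is that Definition \ref{def subseteq leq} requires the dominating subset of $\Omega$ to have the \emph{same size} as the chosen subset of $\Psi$; this is fine here because $\Omega$ has size $n$ by hypothesis and the assignment $i \mapsto (x-a_i)^{r_i}$ is injective, so distinct $(x-a_i)^{s_i}$ go to distinct $(x-a_j)^{r_j}$ even if some $a_i$ were to coincide --- though in fact $\Omega$ having size $n$ already forces the $a_i$ to be pairwise distinct. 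Applying Corollary \ref{cor monotonicity} then yields that $\Psi$ is P-independent.

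For item 2, take $s_i = 1$ for all $i$ in the reasoning above, so that $\Psi = \{x-a_1, \ldots, x-a_n\} \subseteq \mathcal{U}_0$ satisfies $\Psi \leq \Omega$. The point that needs care here is that the \emph{set} $\{x-a_1,\ldots,x-a_n\}$ must genuinely have $n$ elements for the notion of P-independence of the underlying subset of $\mathbb{F}$ (Definition \ref{def P-independence}) to apply with the right count; equivalently, the $a_i$ must be pairwise distinct. This follows because $\Omega$ has size $n$: if $a_i = a_j$ for $i \neq j$, then since $r_i, r_j \geq 1$ we could not have $(x-a_i)^{r_i} = (x-a_j)^{r_j}$ unless $r_i = r_j$, but two equal elements would make $|\Omega| < n$, contradicting the hypothesis. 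Once $|\Psi| = n$ is established, Corollary \ref{cor monotonicity} gives that $\Psi$ is P-independent, which is exactly the assertion that the underlying set $\{a_1,\ldots,a_n\}$ is P-independent.

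I expect the main --- indeed only --- obstacle to be the bookkeeping around sizes: making sure that ``$\Psi \leq \Omega$'' is checked against the precise wording of Definition \ref{def subseteq leq} (matching cardinalities of the finite subsets) and that the size-$n$ hypothesis on $\Omega$ is correctly leveraged to conclude the $a_i$ are distinct, so that the reduced sets also have size $n$. No new ideas beyond Corollary \ref{cor monotonicity} and elementary right-divisibility of powers of a linear term are required.
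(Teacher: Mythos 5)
Your overall strategy is the paper's: both items are instances of Corollary \ref{cor monotonicity} once one checks $\Psi \leq \Omega$ in the sense of Definition \ref{def subseteq leq}, and your verification of that relation (picking one index per distinct element of $\Psi$ and using injectivity of $i \mapsto (x-a_i)^{r_i}$, which holds because $|\Omega| = n$) is correct. Item 1 is therefore fine.

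There is, however, a genuine flaw in your justification of the size-$n$ claim in item 2. You assert that $|\Omega| = n$ \emph{alone} forces $a_1, \ldots, a_n$ to be pairwise distinct, arguing that $a_i = a_j$ would produce two equal elements of $\Omega$. But your own argument only covers the subcase $r_i = r_j$: if $a_i = a_j$ with $r_i \neq r_j$, then $(x-a_i)^{r_i} \neq (x-a_j)^{r_j}$ and $\Omega$ still has $n$ elements. For instance, $\Omega = \{x-a, (x-a)^2\}$ has size $2$ while the underlying set is the singleton $\{a\}$. The distinctness of the $a_i$ genuinely requires the P-independence hypothesis, not just the cardinality of $\Omega$. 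A correct argument: if $a_i = a_j$ with $i \neq j$, then $r_i \neq r_j$ by the size hypothesis, say $r_i < r_j$; then $(x-a_i)^{r_i}$ divides $(x-a_j)^{r_j}$ on the right, so the doubleton $\{(x-a_i)^{r_i}, (x-a_j)^{r_j}\}$ (which is $\leq \Omega$ and hence P-independent by Corollary \ref{cor monotonicity}) has least left common multiple $(x-a_j)^{r_j}$ of degree $r_j < r_i + r_j$, contradicting Definition \ref{def P-independence}. With that repaired, the rest of your item 2 (take $s_i = 1$ and apply Corollary \ref{cor monotonicity}) goes through.
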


However, the reversed implications do not hold, as the following example shows.

\begin{example} \label{ex power of linear term is no multiplicity}
Let $ q $ be a power of an odd prime number and let $ m \in \mathbb{N} $ be an even number with $ m \geq 2 $. Let $ \mathbb{F} = \mathbb{F}_{q^m} $ be the finite field with $ q^m $ elements, let $ \sigma : \mathbb{F}_{q^m} \longrightarrow \mathbb{F}_{q^m} $ be given by $ \sigma(a) = a^q $, for $ a \in \mathbb{F}_{q^m} $, and let $ \delta = 0 $. Let $ \gamma \in \mathbb{F}_{q^m}^* $ be a primitive element (a generator of the cyclic multiplicative group $ \mathbb{F}_{q^m}^* $ \cite[Th. 2.8]{lidl}) and consider
$$ a = \gamma^{ \frac{1}{2} \cdot \frac{q^m - 1}{q-1}} \quad \textrm{and} \quad b = - \gamma^{ \frac{1}{2} \cdot \frac{q^m - 1}{q-1}} . $$
Since $ q $ is odd and $ m $ is even, $ 2 $ divides $ (q^m-1)/(q-1) = 1 + q + q^2 + \cdots + q^{m-1} $, thus $ a $ and $ b $ are well defined, and since $ q $ is odd, it holds that $ a \neq b $. We have
$$ (x-a)^2 = (x-b)^2 \in \mathbb{F}_{q^m}[x;\sigma,\delta]. $$
Thus the doubleton set $ \Psi = \{ a,b \} \subseteq \mathbb{F}_{q^m} $ is P-independent, but the doubleton set $ \Omega = \{ (x-a)^2, (x-b)^3 \} \subseteq \mathbb{F}_{q^m}[x;\sigma,\delta] $ is not P-independent.

This example also shows that the minimal skew polynomial of $ \Psi = \{ a,b \} $ is $ F_\Psi = (x-a)^2 = (x-b)^2 $, which has zeros of multiplicity $ 2 $, that is, its zeros are not all simple. 

This is according to Definition \ref{def multiplicity I}. Multiplicities as in Definition \ref{def multiplicity II} in the following section do not suffer from such pathologies (see Theorem \ref{th P-independence multiplicity seqs}).
\end{example}


However, a union theorem \cite[Th. 22]{lam} still holds in general for multiplicities as in Definition \ref{def multiplicity I}. To that end, we need some preliminary definitions and lemmas. The first of these is the equivalence relation given by \textit{conjugacy} \cite[Eq. (2.5)]{lam-leroy}.
%

\begin{definition} [\textbf{Conjugacy \cite{lam-leroy}}] \label{def conjugate}
Given $ a,b \in \mathbb{F} $, we say that $ b $ is a $ (\sigma,\delta) $-conjugate (or simply conjugate) of $ a $ if there exists $ \beta \in \mathbb{F}^* $ such that $ b = a^\beta $, where 
$$ a^\beta = \sigma(\beta) a \beta^{-1} + \delta(\beta) \beta^{-1}. $$
The conjugacy class of $ a $ is defined as $ C^{\sigma,\delta}(a) = \left\lbrace a^\beta \mid \beta \in \mathbb{F}^* \right\rbrace $.
\end{definition}

Conjugacy allows us to introduce the powerful product rule \cite[Th. 2.7]{lam-leroy}.

\begin{lemma} [\textbf{Product rule \cite{lam-leroy}}] \label{lemma product rule}
Let $ F,G \in \mathbb{F}[x;\sigma,\delta] $ and let $ a \in \mathbb{F} $. If $ G(a) = 0 $, then $ (FG)(a) = 0 $. On the other hand, if $ \beta = G(a) \neq 0 $, then
$$ (FG)(a) = F (a^\beta) G(a). $$
\end{lemma}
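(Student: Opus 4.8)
The plan is to reduce everything to right Euclidean division by the linear polynomial $x-a$, exploiting the intertwining relation between $x-a$, nonzero constants, and $(\sigma,\delta)$-conjugation. Following Definition \ref{def evaluation high degree}, write $F = Q(x-a) + F(a)$ and $G = S(x-a) + G(a)$ with $Q,S \in \mathbb{F}[x;\sigma,\delta]$ and $F(a),G(a) \in \mathbb{F}$. The first assertion is then immediate: if $G(a)=0$, then $G = S(x-a)$, so $FG = (FS)(x-a)$, i.e.\ $x-a$ divides $FG$ on the right and $(FG)(a)=0$.

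For the main case $\beta = G(a) \neq 0$, the key step is the identity
$$ (x - a^\beta)\,\beta = \sigma(\beta)\,(x-a), $$
which follows by a one-line computation from $x\beta = \sigma(\beta)x + \delta(\beta)$ together with $a^\beta\beta = \sigma(\beta)a + \delta(\beta)$, the latter being a direct consequence of the formula for $a^\beta$ in Definition \ref{def conjugate}. Writing $F = Q'(x - a^\beta) + F(a^\beta)$ for a suitable $Q' \in \mathbb{F}[x;\sigma,\delta]$ and multiplying on the right by $\beta$, this identity gives $F\beta = Q'\sigma(\beta)(x-a) + F(a^\beta)\beta$; since $F(a^\beta)\beta \in \mathbb{F}$ and $\deg(x-a)=1$, uniqueness of the Euclidean remainder yields $(F\beta)(a) = F(a^\beta)\beta$. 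Combining with $G = S(x-a)+\beta$ gives $FG = FS(x-a) + F\beta = \bigl(FS + Q'\sigma(\beta)\bigr)(x-a) + F(a^\beta)\beta$, and reading off the remainder modulo $x-a$ produces $(FG)(a) = F(a^\beta)\beta = F(a^\beta)G(a)$, as desired.

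There is no genuine obstacle here; the only points requiring a small amount of care are the displayed intertwining identity (which is precisely the computation that motivates the definition of conjugation) and the remark that $\sigma(\beta)\neq 0$ since $\sigma$ is an injective ring endomorphism of a division ring, so that the final displayed equality really is the right Euclidean division of $FG$ by $x-a$ and its remainder is unambiguous.
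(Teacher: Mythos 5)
Your proof is correct. The paper does not reprove this lemma---it cites it from Lam and Leroy---and your argument, reducing to right division by $x-a$ via the intertwining identity $(x-a^\beta)\beta = \sigma(\beta)(x-a)$, is essentially the standard proof from that reference, so there is nothing to add.
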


Finally, we need the following auxiliary lemma.

\begin{lemma} \label{lemma min skew pol expression for union}
Assume that $ \Omega = \{ (x-a_1)^{r_1}, (x-a_2)^{r_2}, \ldots, (x-a_n)^{r_n} \} \subseteq \mathbb{F}[x;\sigma,\delta] $ is P-independent of size $ n \in \mathbb{Z}_+ $, where $ a_1,a_2,\ldots, a_n \in \mathbb{F} $ and $ r_1, r_2,\ldots, r_n \in \mathbb{Z}_+ $. Then there exist elements $ \beta_{i,j} \in \mathbb{F}^* $, for $ j = 1,2, \ldots, r_i $ and $ i = 1,2, \ldots, n $, such that, if $ \Omega_{i,j} = \{ (x-a_1)^{r_1}, (x-a_2)^{r_2}, \ldots, (x-a_{i-1})^{r_{i-1}}, (x-a_i)^{j} \} $ (thus $ \Omega_{n,r_n} = \Omega $), then
\begin{equation}
F_{\Omega_{i,j}} = \left( x-a_i^{\beta_{i,j}} \right) \cdots \left(x-a_i^{\beta_{i,1}} \right) \cdots \left(x-a_1^{\beta_{1,r_1}} \right) \cdots \left(x-a_1^{\beta_{1,1}} \right) ,
\label{eq betas for min skew pol of multiplicities condition}
\end{equation}
for $ j = 1,2, \ldots, r_i $ and $ i = 1,2, \ldots, n $. 
\end{lemma}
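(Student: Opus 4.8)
The plan is to argue by induction along the chain of algebraic sets
\[
\Omega_{1,1} \leq \Omega_{1,2} \leq \cdots \leq \Omega_{1,r_1} \leq \Omega_{2,1} \leq \cdots \leq \Omega_{n,r_n} = \Omega ,
\]
in which each step either raises the multiplicity of the current point $ a_i $ by one (steps $ \Omega_{i,j} \leq \Omega_{i,j+1} $) or appends the factor $ x-a_i $ (steps $ \Omega_{i-1,r_{i-1}} \leq \Omega_{i,1} $, which are in fact inclusions); both relations are immediate from Definition \ref{def subseteq leq}. Two facts will be used at every step. First, $ \Omega_{i,j} \leq \Omega $ (match $ (x-a_k)^{r_k} $ with itself for $ k<i $, and $ (x-a_i)^j $ with $ (x-a_i)^{r_i} $), so $ \Omega_{i,j} $ is P-independent by Corollary \ref{cor monotonicity}, hence $ \deg(F_{\Omega_{i,j}}) = r_1 + \cdots + r_{i-1} + j $ by Definition \ref{def P-independence}. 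Second, for any consecutive pair $ \Omega' \leq \Omega'' $ in the chain, item 5 of Proposition \ref{prop properties alg geom} gives $ I(\Omega'') \subseteq I(\Omega') $, so $ F_{\Omega'} $ right-divides $ F_{\Omega''} $; since the first fact forces $ \deg(F_{\Omega''}) = \deg(F_{\Omega'}) + 1 $ and both minimal skew polynomials are monic, we get $ F_{\Omega''} = (x-c) F_{\Omega'} $ for a unique $ c \in \mathbb{F} $.

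The base case is $ \Omega_{1,1} = \{ x-a_1 \} $, where $ F_{\Omega_{1,1}} = x-a_1 $ and we take $ \beta_{1,1} = 1 $, so that $ a_1^{\beta_{1,1}} = a_1 $. For the inductive step I would take a consecutive pair $ \Omega' \leq \Omega'' $ and let $ a_i $ be the point whose multiplicity rises from $ j' $ to $ j'+1 $ (allowing $ j'=0 $ in a ``new point'' step). Writing $ F_{\Omega'} = G' (x-a_i)^{j'} $ — possible since $ (x-a_i)^{j'} $ right-divides $ F_{\Omega'} $, with $ G' $ unique because $ \mathbb{F}[x;\sigma,\delta] $ is a domain — and using that $ (x-a_i)^{j'+1} $ right-divides $ F_{\Omega''} = (x-c) G' (x-a_i)^{j'} $, cancelling $ (x-a_i)^{j'} $ on the right (again by the domain property) shows that $ x-a_i $ right-divides $ (x-c) G' $, that is, $ \big( (x-c) G' \big)(a_i) = 0 $. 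The crucial point is then that $ \beta := G'(a_i) \neq 0 $: otherwise $ x-a_i $ would right-divide $ G' $, so $ (x-a_i)^{j'+1} $ would right-divide $ F_{\Omega'} $, which together with right-divisibility of $ F_{\Omega'} $ by the remaining elements of $ \Omega'' $ (the same as those of $ \Omega' $) would give $ F_{\Omega'} \in I(\Omega'') $, hence $ F_{\Omega''} $ would right-divide $ F_{\Omega'} $; combined with $ F_{\Omega'} \mid F_{\Omega''} $ and monicity this contradicts $ \deg(F_{\Omega''}) = \deg(F_{\Omega'}) + 1 $. Given $ \beta \neq 0 $, the product rule (Lemma \ref{lemma product rule}) applied to $ \big( (x-c) G' \big)(a_i) = 0 $ yields $ 0 = (a_i^{\beta} - c)\beta $, so $ c = a_i^{\beta} $; setting $ \beta_{i,j'+1} := \beta \in \mathbb{F}^* $ and prepending the factor $ x - a_i^{\beta_{i,j'+1}} $ to the factorization of $ F_{\Omega'} $ supplied by the induction hypothesis reproduces exactly (\ref{eq betas for min skew pol of multiplicities condition}) for $ F_{\Omega''} $.

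I expect the only genuine obstacle to be the verification that $ G'(a_i) \neq 0 $: this is precisely where the P-independence of $ \Omega $ (through the exact degree equality, not merely the inequality of Proposition \ref{prop deg of min skew pol upper bound by sum}) is indispensable, and exactly the phenomenon that fails in Example \ref{ex power of linear term is no multiplicity}. Everything else reduces to cancellation in the domain $ \mathbb{F}[x;\sigma,\delta] $, one application of the product rule, and bookkeeping of degrees; the main care needed is in the indexing of the chain and in identifying, at each step, which point's multiplicity is being incremented.
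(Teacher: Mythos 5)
Your proposal is correct and follows essentially the same route as the paper: induct on the total degree $\sum_i r_i$ by peeling off one linear factor at a time, show the quotient $G'$ does not vanish at $a_i$ using P-independence of $\Omega$ (via the degree count, exactly as the paper argues that $(x-a_n)^{r_n}$ cannot right-divide $F_\Psi$), and identify the new factor as $x-a_i^{G'(a_i)}$ by the product rule. The only cosmetic difference is that you unroll the induction into the full chain $\Omega_{1,1}\leq\cdots\leq\Omega_{n,r_n}$ rather than decrementing the last exponent in a single inductive step.
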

\begin{proof}
We prove the lemma by induction on $ r = r_1 + r_2 + \cdots + r_n $. First, the case $ n = r_1 = r= 1 $ is trivial. Assume now that $ r \geq 2 $. We will assume for convenience that $ r_n \geq 2 $, but the case $ r_n = 1 $ (still with $ r \geq 2 $) is proven analogously.

Define $ \Psi = \{ (x-a_1)^{r_1}, (x-a_2)^{r_2}, \ldots, (x-a_n)^{r_n - 1} \} $, which is P-independent by Corollary \ref{cor monotonicity} since $ \Psi \leq \Omega $. Furthermore, $ \Psi $ has size $ n $ since $ \Omega $ is P-independent. By induction hypothesis, there exist $ \beta_{i,j} \in \mathbb{F}[x;\sigma,\delta] $ satisfying Equation (\ref{eq betas for min skew pol of multiplicities condition}), for $ j = 1,2, \ldots, r_i $ if $ 1 \leq i \leq n-1 $, and for $ j = 1,2, \ldots, r_n-1 $ if $ i = n $. In particular, 
$$ F_\Psi = \left( x-a_n^{\beta_{n,r_n-1}} \right) \cdots \left(x-a_n^{\beta_{n,1}} \right) \cdots \left(x-a_1^{\beta_{1,r_1}} \right) \cdots \left(x-a_1^{\beta_{1,1}} \right). $$
Since $ \Psi \leq \Omega $, then $ F_\Psi $ divides $ F_\Omega $ on the right. Furthermore, by counting degrees, since $ \Omega $ is P-independent, there exists $ b \in \mathbb{F} $ with $ F_\Omega = (x-b) F_\Psi $. Now, by definition of $ F_\Psi $ and since $ \Omega $ is P-independent, we have that $ (x-a_n)^{r_n-1} $ divides $ F_\Psi $ on the right, but $ (x-a_n)^{r_n} $ does not. That is, there exists $ G \in \mathbb{F}[x;\sigma,\delta] $ such that 
$$ F_\Psi = G (x-a_n)^{r_n-1} \quad \textrm{but} \quad G(a_n) \neq 0. $$
Moreover, since $ (x-a_n)^{r_n} $ divides $ F_\Omega $ on the right, there exists a skew polynomial $ H \in \mathbb{F}[x;\sigma,\delta] $ such that $ F_\Omega = (x-b) G (x-a_n)^{r_n-1} = H (x-a_n)^{r_n} $. Thus, we deduce that $ (x-b) G = H (x-a_n) $. In particular, $ a_n $ is a zero of $ (x-b) G $ but not of $ G $. By the product rule (Lemma \ref{lemma product rule}), we deduce that $ G(a_n) \neq 0 $ and $ a_n^{G(a_n)} = b $. Therefore, we conclude by choosing $ \beta_{n,r_n} = G(a_n) $.
\end{proof}

The following theorem extends \cite[Th. 22]{lam} to multiplicities as in Definition \ref{def multiplicity I}.

\begin{theorem} [\textbf{Union}] \label{th union}
Let $ \Omega = \{ (x-a_1)^{r_1}, \ldots, (x-a_n)^{r_n} \} $ and $ \Psi = \{ (x-b_1)^{s_1}, \ldots, (x-b_m)^{s_m} \} $ be P-independent sets of sizes $ n,m \in \mathbb{Z}_+ $, respectively, and where $ a_1,\ldots, a_n, b_1, $ $ \ldots, $ $ b_m $ $ \in \mathbb{F} $ and $ r_1, \ldots, r_n, s_1, \ldots, s_m \in \mathbb{Z}_+ $. If no element in $ \{ a_1, a_2, \ldots, a_n \} $ is conjugate to an element in $ \{ b_1, b_2, \ldots, b_m\} $, then $ \Omega \cup \Psi $ has size $ n+m $ and is P-independent.
\end{theorem}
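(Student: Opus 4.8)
The plan is to reduce the statement to the single fact that the minimal skew polynomials $ F_\Omega $ and $ F_\Psi $ are right coprime, i.e.\ that their greatest right common divisor is $ 1 $. Indeed, by Proposition \ref{prop properties alg geom} we have $ I(\Omega \cup \Psi) = I(\Omega) \cap I(\Psi) = (F_\Omega) \cap (F_\Psi) $; since $ F_\Omega, F_\Psi \neq 0 $ ($ \Omega $ and $ \Psi $ being algebraic), this is a nonzero left ideal of the domain $ \mathbb{F}[x;\sigma,\delta] $, so $ \Omega \cup \Psi $ is finite and algebraic and $ F_{\Omega\cup\Psi} $ is the least left common multiple of $ F_\Omega $ and $ F_\Psi $. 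Let $ D $ be their greatest right common divisor. By Lemma \ref{lemma ores cross cut formula} and the P-independence of $ \Omega $ and $ \Psi $, $ \deg(F_{\Omega\cup\Psi}) = \deg(F_\Omega) + \deg(F_\Psi) - \deg(D) = \sum_{i=1}^n r_i + \sum_{j=1}^m s_j - \deg(D) $. On the other hand, Proposition \ref{prop deg of min skew pol upper bound by sum} gives $ \deg(F_{\Omega\cup\Psi}) \leq \sum_{P \in \Omega\cup\Psi}\deg(P) \leq \sum_{i=1}^n r_i + \sum_{j=1}^m s_j $, where the second inequality is an equality precisely when $ \Omega\cup\Psi $ has size $ n+m $. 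Hence, once $ \deg(D) = 0 $ is proved, all of these become equalities, which at once forces $ \Omega\cup\Psi $ to have size $ n+m $ and $ \deg(F_{\Omega\cup\Psi}) = \sum_{P\in\Omega\cup\Psi}\deg(P) $, that is, $ \Omega\cup\Psi $ is P-independent.

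To establish $ \deg(D) = 0 $, I would first record that the roots in $ \mathbb{F} $ of a product of linear skew polynomials lie in the conjugacy classes of the constant terms of its factors: if $ H = (x-c_k)\cdots(x-c_1) $ and $ H(e) = 0 $ for some $ e \in \mathbb{F} $, then an induction on $ k $ using the product rule (Lemma \ref{lemma product rule}) shows $ e \in C^{\sigma,\delta}(c_j) $ for some $ j $. In the inductive step, either $ e $ is already a root of $ (x-c_{k-1})\cdots(x-c_1) $, or, putting $ \beta = ((x-c_{k-1})\cdots(x-c_1))(e) \neq 0 $, the product rule forces $ e^\beta = c_k $, whence $ e \in C^{\sigma,\delta}(c_k) $ by symmetry of conjugacy. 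Applying this to the explicit linear factorizations of $ F_\Omega $ and $ F_\Psi $ furnished by Lemma \ref{lemma min skew pol expression for union}, together with transitivity of conjugacy, every root of $ F_\Omega $ in $ \mathbb{F} $ lies in $ \bigcup_{i=1}^n C^{\sigma,\delta}(a_i) $ and every root of $ F_\Psi $ in $ \mathbb{F} $ lies in $ \bigcup_{j=1}^m C^{\sigma,\delta}(b_j) $. Since no $ a_i $ is conjugate to any $ b_j $, these two sets are disjoint, so $ F_\Omega $ and $ F_\Psi $ have no common root in $ \mathbb{F} $.

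Now suppose for contradiction that $ \deg(D) \geq 1 $. Since $ D $ divides $ F_\Omega $ on the right, the left $ \mathbb{F} $-module $ \mathbb{F}[x;\sigma,\delta]/(D) $ is a quotient of $ \mathbb{F}[x;\sigma,\delta]/(F_\Omega) $; and because $ F_\Omega $ is a product of linear terms (Lemma \ref{lemma min skew pol expression for union}), the module $ \mathbb{F}[x;\sigma,\delta]/(F_\Omega) $ admits a composition series all of whose factors are one-dimensional over $ \mathbb{F} $. Hence every composition factor of $ \mathbb{F}[x;\sigma,\delta]/(D) $ is one-dimensional; being nonzero, this module therefore has a one-dimensional quotient, necessarily of the form $ \mathbb{F}[x;\sigma,\delta]/(x-e) $ for some $ e \in \mathbb{F} $, which means that $ (x-e) $ divides $ D $ on the right. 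But then $ (x-e) $ divides both $ F_\Omega $ and $ F_\Psi $ on the right, i.e.\ $ F_\Omega(e) = F_\Psi(e) = 0 $, contradicting the preceding paragraph. Therefore $ D = 1 $, and the theorem follows.

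I expect the main obstacle to be exactly this last step: extracting a monic linear right factor from an arbitrary positive-degree right divisor of $ F_\Omega $. This is where non-commutativity bites, and it rests on the structure theory of the skew-polynomial principal ideal domain $ \mathbb{F}[x;\sigma,\delta] $ (Jordan--H{\"o}lder / unique factorization up to similarity), together with the fact that a skew polynomial similar to a linear one is again linear. By contrast, the conjugacy-class localization of the roots is a routine induction once Lemmas \ref{lemma product rule} and \ref{lemma min skew pol expression for union} are in hand, and the reduction in the first paragraph is pure bookkeeping with Ore's degree formula.
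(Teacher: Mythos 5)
Your proof is correct, but it takes a genuinely different route from the paper's. The paper argues by induction on the total multiplicity $ t = \sum_i r_i + \sum_j s_j $: it peels one factor off $ (x-b_m)^{s_m} $, invokes the induction hypothesis for $ \Omega \cup \Psi^\prime $, and derives a contradiction from $ F_{\Omega\cup\Psi^\prime} = F_{\Omega\cup\Psi} $ by combining Lemma \ref{lemma min skew pol expression for union} with the product rule (Lemma \ref{lemma product rule}) --- precisely the two lemmas you use instead to localize the right roots of $ F_\Omega $ and $ F_\Psi $ in the conjugacy classes of the $ a_i $ and of the $ b_j $. Your reduction of the whole statement to the right-coprimality of $ F_\Omega $ and $ F_\Psi $, via Ore's degree formula (Lemma \ref{lemma ores cross cut formula}) and Proposition \ref{prop deg of min skew pol upper bound by sum}, is clean and also dispatches the cardinality claim $ |\Omega\cup\Psi| = n+m $ in one stroke, something the paper's proof leaves implicit. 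The price is exactly the step you single out: to conclude that a right divisor $ D $ of $ F_\Omega $ with $ \deg(D) \geq 1 $ admits a monic linear right factor, you need that $ \mathbb{F}[x;\sigma,\delta]/(F_\Omega) $ has finite length with all composition factors one-dimensional (because $ F_\Omega $ splits into linear factors by Lemma \ref{lemma min skew pol expression for union}), plus Jordan--H\"older and the fact that every left ideal is principal; all of this is standard for the skew polynomial ring but is machinery the paper never sets up, whereas the paper's inductive peeling needs nothing beyond the product rule. In short, you trade the paper's induction bookkeeping for a conceptually cleaner coprimality statement, at the cost of importing the composition-series/unique-factorization theory of $ \mathbb{F}[x;\sigma,\delta] $.
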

\begin{proof}
We proceed by induction on $ t = \sum_{i=1}^n r_i + \sum_{j=1}^m s_j $. Since $ n \geq 1 $ and $ m \geq 1 $, then $ t \geq 2 $. Observe that the case $ r_1 = \ldots = r_n = s_1 = \ldots = s_m = 1 $ is precisely Lam's union theorem \cite[Th. 22]{lam}. Hence the basis step $ t = 2 $ follows (since, in that case, $ n = m = r_1 = s_1 = 1 $), and for the inductive step, we may assume without loss of generality that $ s_m \geq 2 $. In other words, we may define 
$$ \Psi^\prime = \{ (x-b_1)^{s_1}, (x-b_2)^{s_2}, \ldots, (x-b_m)^{s_m -1} \}, $$
which is P-independent by Corollary \ref{cor monotonicity} since $ \Psi^\prime \leq \Psi $. By induction hypothesis, $ \Omega \cup \Psi^\prime $ is P-independent. We proceed now by contradiction and assume that $ \Omega \cup \Psi $ is not P-independent. Since $ \Omega \cup \Psi^\prime \leq \Omega \cup \Psi $, then $ F_{\Omega \cup \Psi^\prime} $ divides $ F_{\Omega \cup \Psi} $ on the right. By counting degrees, we conclude that $ F_{\Omega \cup \Psi^\prime} = F_{\Omega \cup \Psi} $. By Lemma \ref{lemma min skew pol expression for union}, there exist elements $ \beta_{i,j} \in \mathbb{F}^* $, for $ j = 1,2, \ldots, r_i $ and $ i = 1,2, \ldots, n $, such that
$$ F_{\Omega \cup \Psi} = F_{\Omega \cup \Psi^\prime} = \left( x-a_n^{\beta_{n,r_n}} \right) \cdots \left(x-a_n^{\beta_{n,1}} \right) \cdots \left(x-a_1^{\beta_{1,r_1}} \right) \cdots \left(x-a_1^{\beta_{1,1}} \right) F_{\Psi^\prime}. $$
Since $ \Psi $ is P-independent, then $ (x-b_m)^{s_m-1} $ divides $ F_{\Psi^\prime} $ on the right, but $ (x-b_m)^{s_m} $ does not. Hence there exists a skew polynomial $ G \in \mathbb{F}[x;\sigma,\delta] $ such that $ F_{\Psi^\prime} = G (x-b_m)^{s_m-1} $ and $ G(b_m) \neq 0 $. Thus we deduce that $ b_m $ is a zero of the skew polynomial
$$ \left( x-a_n^{\beta_{n,r_n}} \right) \cdots \left(x-a_n^{\beta_{n,1}} \right) \cdots \left(x-a_1^{\beta_{1,r_1}} \right) \cdots \left(x-a_1^{\beta_{1,1}} \right) G, $$
while $ G(b_m) \neq 0 $. By the product rule (Lemma \ref{lemma product rule}), we deduce that there exists $ \gamma \in \mathbb{F}^* $ and indices $ j = 1,2, \ldots, r_i $ and $ i = 1,2, \ldots, n $, such that $ b_m^\gamma = a_i^{\beta_{i,j}} $. This contradicts the hypothesis that no element in $ \{ a_1, a_2, \ldots, a_n \} $ is conjugate to an element in $ \{ b_1, b_2, \ldots, b_m\} $, and we are done.
\end{proof}
%

We may now extend the degree bound and Hermite interpolation. First we combine Theorem \ref{th union} with the fact that singleton sets are always P-independent.
%

\begin{corollary} \label{cor non-conj multiple points are P-indep}
Let $ \Omega = \{ (x-a_1)^{r_1}, (x-a_2)^{r_2}, \ldots, (x-a_n)^{r_n} \} \subseteq \mathbb{F}[x;\sigma,\delta] $, where $ a_1,a_2,\ldots, a_n \in \mathbb{F} $ and $ r_1, r_2,\ldots, r_n \in \mathbb{Z}_+ $. If $ a_1, a_2, \ldots, a_n $ are pair-wise non-conjugate, then $ \Omega $ has size $ n $ and is P-independent.
\end{corollary}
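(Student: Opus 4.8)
\textit{Proof proposal.} The plan is to obtain this as an immediate consequence of the union theorem (Theorem \ref{th union}) together with the observation that every singleton subset of $ \mathcal{U}_1 $ is P-independent. First I would record that, for any monic $ P \in \mathbb{F}[x;\sigma,\delta] $ with $ \deg(P) \geq 1 $, the singleton $ \{ P \} $ is finite and algebraic with $ F_{\{ P \}} = P $: indeed $ I(\{ P \}) = (P) $ by item 2 of Proposition \ref{prop properties alg geom}, and every non-zero element $ GP $ of the principal left ideal $ (P) $ has degree $ \deg(G) + \deg(P) \geq \deg(P) $, with equality precisely when $ G \in \mathbb{F}^* $, in which case $ GP $ is monic if and only if $ G = 1 $. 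Hence $ \deg(F_{\{ P \}}) = \deg(P) = \sum_{Q \in \{ P \}} \deg(Q) $, which is exactly the defining equality of P-independence for a one-element set. In particular each $ \{ (x-a_i)^{r_i} \} $ is P-independent of size $ 1 $.

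Next I would proceed by induction on $ n $. The case $ n = 1 $ is the observation just made. For the inductive step, assume the statement for any $ n-1 $ pairwise non-conjugate elements, and let $ a_1, a_2, \ldots, a_n \in \mathbb{F} $ be pairwise non-conjugate. Set $ \Omega^\prime = \{ (x-a_1)^{r_1}, (x-a_2)^{r_2}, \ldots, (x-a_{n-1})^{r_{n-1}} \} $, which by the induction hypothesis has size $ n-1 $ and is P-independent, and set $ \Psi = \{ (x-a_n)^{r_n} \} $, which is P-independent of size $ 1 $. Since $ a_n $ is not conjugate to any of $ a_1, a_2, \ldots, a_{n-1} $, no element of the underlying set of $ \Omega^\prime $ is conjugate to the (single) element of the underlying set of $ \Psi $. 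Theorem \ref{th union} then yields that $ \Omega^\prime \cup \Psi = \Omega $ has size $ n $ and is P-independent, completing the induction.

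I do not expect a genuine obstacle here: all of the substance — the product rule, Lam's union theorem, and the auxiliary factorization of the minimal skew polynomial (Lemma \ref{lemma min skew pol expression for union}) — has already been absorbed into Theorem \ref{th union}. The only point that requires a little care is that the size count in the conclusion be threaded correctly through the induction, but this is automatic, since Theorem \ref{th union} itself asserts that the union of its two P-independent sets has size $ n+m $; in particular one never needs to argue separately that the polynomials $ (x-a_1)^{r_1}, (x-a_2)^{r_2}, \ldots, (x-a_n)^{r_n} $ are pairwise distinct.
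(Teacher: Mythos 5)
Your proposal is correct and follows exactly the route the paper indicates: it obtains the corollary by combining Theorem \ref{th union} with the fact that singleton sets are always P-independent, iterated by induction on $ n $. No issues.
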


The next result combines Corollary \ref{cor non-conj multiple points are P-indep} with Theorem \ref{th upper bound multiplicities by degree general}.

\begin{theorem} [\textbf{Degree bound I}] \label{th deg bound for nonconj naive case}
Let $ a_1, a_2, \ldots, a_n \in \mathbb{F} $ be pair-wise non-conjugate. If $ F \in \mathbb{F}[x;\sigma,\delta] $ is not zero and has $ a_i $ as a zero of multiplicity $ r_i \in \mathbb{Z}_+ $ (according to Definition \ref{def multiplicity I}), for $ i = 1,2, \ldots, n $, then it holds that
$$ \sum_{i=1}^n r_i \leq \deg(F). $$
Furthermore, equality holds if, and only if, $ F = a F_\Omega $, where $ a \in \mathbb{F}^* $ and $ F_\Omega $ is the least left common multiple of $ (x-a_1)^{r_1} , (x-a_2)^{r_2}, \ldots, (x-a_n)^{r_n} $.
\end{theorem}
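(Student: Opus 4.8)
The plan is to deduce this theorem directly from the general machinery of Section~\ref{sec high-degree zeros}, specialized to the universal set $\mathcal{U}_1$, combined with Corollary~\ref{cor non-conj multiple points are P-indep}. First I would set $ \Omega = \{ (x-a_1)^{r_1}, (x-a_2)^{r_2}, \ldots, (x-a_n)^{r_n} \} \subseteq \mathcal{U}_1 $. Since $ a_1, a_2, \ldots, a_n $ are pairwise non-conjugate, Corollary~\ref{cor non-conj multiple points are P-indep} asserts that $ \Omega $ has size exactly $ n $ and is P-independent.

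Next I would observe that, by Definition~\ref{def multiplicity I}, the hypothesis that $ a_i $ is a zero of $ F $ of multiplicity $ r_i $ means precisely that $ (x-a_i)^{r_i} $ divides $ F $ on the right, i.e. $ F\big((x-a_i)^{r_i}\big) = 0 $, for $ i = 1,2,\ldots,n $; hence $ F \in I(\Omega) $. Since $ F \neq 0 $, I can apply Theorem~\ref{th upper bound multiplicities by degree general} to the P-independent set $ \Omega $, which yields $ \sum_{P \in \Omega} \deg(P) \leq \deg(F) $. As $ \sum_{P \in \Omega} \deg(P) = \sum_{i=1}^n \deg\big((x-a_i)^{r_i}\big) = \sum_{i=1}^n r_i $, this is exactly the claimed bound.

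For the equality clause, Theorem~\ref{th upper bound multiplicities by degree general} additionally gives that equality $ \sum_{i=1}^n r_i = \deg(F) $ holds if, and only if, $ F = a F_\Omega $ with $ a \in \mathbb{F}^* $. It then remains to identify $ F_\Omega $ with the least left common multiple of $ (x-a_1)^{r_1}, \ldots, (x-a_n)^{r_n} $: by item~7 of Proposition~\ref{prop properties alg geom}, $ I(\Omega) = \bigcap_{i=1}^n I(\{ (x-a_i)^{r_i} \}) = \bigcap_{i=1}^n \big( (x-a_i)^{r_i} \big) $, and since $ F_\Omega $ is the monic generator of the left ideal $ I(\Omega) $, it is by definition the least left common multiple of the $ (x-a_i)^{r_i} $, exactly as already used in the proof of Proposition~\ref{prop deg of min skew pol upper bound by sum}.

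I do not expect any genuine obstacle: the substantive work has been front-loaded into Corollary~\ref{cor non-conj multiple points are P-indep} (which itself rests on the union theorem, Theorem~\ref{th union}) and into Theorem~\ref{th upper bound multiplicities by degree general}. The only points requiring a line of care are the translation between Definition~\ref{def multiplicity I} and membership in $ I(\Omega) $, and the routine bookkeeping identifying $ F_\Omega $ with the least left common multiple; neither is a real difficulty.
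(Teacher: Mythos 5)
Your proposal is correct and follows exactly the route the paper intends: the paper states this theorem as an immediate combination of Corollary~\ref{cor non-conj multiple points are P-indep} (which gives that $\Omega$ has size $n$ and is P-independent) with Theorem~\ref{th upper bound multiplicities by degree general}, and offers no further proof. Your additional care in noting that the size-$n$ claim is needed for the degree count and in identifying $F_\Omega$ with the least left common multiple via Proposition~\ref{prop properties alg geom} is exactly the right bookkeeping.
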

%

The next results follows from Corollary \ref{cor non-conj multiple points are P-indep} with Theorem \ref{th lagrange interpolation}.

\begin{theorem} [\textbf{Hermite interpolation I}] \label{th hermite interpolation nonconj naive case}
Let $ a_1, a_2, \ldots, a_n \in \mathbb{F} $ be pair-wise non-conjugate. Let the notation of Hasse derivatives be as in Proposition \ref{prop charact multiplicities derivatives I}. For all $ b_{i,j} \in \mathbb{F} $, for $ j = 1,2,\ldots, r_i $ and $ i = 1,2, \ldots, n $, there exists a unique $ F \in \mathbb{F}[x;\sigma,\delta] $ with $ \deg(F) < \sum_{i=1}^n r_i $ such that $ F^{(j-1)}(a_i) = b_{i,j} $, for $ j = 1,2,\ldots, r_i $ and $ i = 1,2, \ldots, n $. In particular, if $ N = \sum_{i=1}^n r_i $ and $ \mathbf{a}_i = (a_i, a_i, \ldots, a_i) \in \mathbb{F}^{r_i} $, for $ i = 1,2, \ldots, n $, then the confluent Vandermonde matrix $ V_N(\mathbf{a}_1, \mathbf{a}_2, \ldots, \mathbf{a}_n) $ from Definition \ref{def confluent vandermonde} is invertible.
\end{theorem}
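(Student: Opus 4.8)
The plan is to deduce this from Corollary \ref{cor non-conj multiple points are P-indep} together with the Lagrange interpolation theorem (Theorem \ref{th lagrange interpolation}), after reinterpreting each local quotient in terms of Hasse derivatives. First I would set $ \Omega = \{ (x-a_1)^{r_1}, (x-a_2)^{r_2}, \ldots, (x-a_n)^{r_n} \} \subseteq \mathbb{F}[x;\sigma,\delta] $. Since $ a_1, a_2, \ldots, a_n $ are pairwise non-conjugate, Corollary \ref{cor non-conj multiple points are P-indep} gives that $ \Omega $ has size $ n $ and is P-independent, hence $ N = \sum_{i=1}^n r_i = \sum_{P \in \Omega} \deg(P) = \deg(F_\Omega) $. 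Applying the equivalence $ 1 \Longleftrightarrow 2 $ of Theorem \ref{th lagrange interpolation} to $ \Omega $, the left $ \mathbb{F} $-linear map $ \phi : \mathbb{F}[x;\sigma,\delta]_N \longrightarrow \prod_{i=1}^n \mathbb{F}[x;\sigma,\delta]/((x-a_i)^{r_i}) $, given by $ \phi(F) = \left( F((x-a_i)^{r_i}) \right)_{i=1}^n $, is an isomorphism of left $ \mathbb{F} $-vector spaces.

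The second step is to identify each factor $ \mathbb{F}[x;\sigma,\delta]/((x-a_i)^{r_i}) $ with $ \mathbb{F}^{r_i} $ via Hasse derivatives. I would apply Proposition \ref{prop taylor expansion} with $ \mathbf{a} = (a_i, a_i, \ldots, a_i) \in \mathbb{F}^{r_i} $: for every $ F $ there is some $ G $ with $ F = G (x-a_i)^{r_i} + \sum_{k=0}^{r_i - 1} F^{(k)}(a_i) (x-a_i)^k $, so the remainder $ F((x-a_i)^{r_i}) $ equals $ \sum_{k=0}^{r_i-1} F^{(k)}(a_i)(x-a_i)^k $ in the notation of Proposition \ref{prop charact multiplicities derivatives I}. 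Since each $ (x-a_i)^k $ is monic of degree $ k $, the family $ \{ (x-a_i)^k \mid 0 \le k < r_i \} $ is a left $ \mathbb{F} $-basis of the quotient (a triangular change of basis from $ \{ 1, x, \ldots, x^{r_i-1} \} $), and by the uniqueness of Hasse derivatives the assignment $ [F] \mapsto (F^{(j-1)}(a_i))_{j=1}^{r_i} $ is a well-defined left $ \mathbb{F} $-linear isomorphism $ \mathbb{F}[x;\sigma,\delta]/((x-a_i)^{r_i}) \longrightarrow \mathbb{F}^{r_i} $. Composing with $ \phi $ yields a left $ \mathbb{F} $-linear isomorphism $ \Phi : \mathbb{F}[x;\sigma,\delta]_N \longrightarrow \prod_{i=1}^n \mathbb{F}^{r_i} $ with $ \Phi(F) = (F^{(j-1)}(a_i))_{i,j} $, whose bijectivity is precisely the existence-and-uniqueness statement for the interpolating $ F $ of degree $ < \sum_i r_i $ with prescribed values $ b_{i,j} $.

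Finally, for the Vandermonde claim I would express $ \Phi $ in coordinates. Writing $ F = \sum_{k=0}^{N-1} F_k x^k $ with $ F_k \in \mathbb{F} $, left $ \mathbb{F} $-linearity of the Hasse derivatives gives $ F^{(j-1)}(a_i) = \sum_{k=0}^{N-1} F_k \, D_{\mathbf{a}_{i,j}}(x^k) $ with $ \mathbf{a}_{i,j} = (a_i, \ldots, a_i) \in \mathbb{F}^j $; comparing with Definition \ref{def confluent vandermonde}, the map sending the coordinate row vector $ (F_0, \ldots, F_{N-1}) $ to the tuple $ (F^{(j-1)}(a_i))_{i,j} $ is right multiplication by the $ N \times N $ matrix $ V_N(\mathbf{a}_1, \mathbf{a}_2, \ldots, \mathbf{a}_n) $ over $ \mathbb{F} $. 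Since this map is the bijection $ \Phi $ read off in the basis $ \{1, x, \ldots, x^{N-1}\} $ and the standard basis, the matrix $ V_N(\mathbf{a}_1, \mathbf{a}_2, \ldots, \mathbf{a}_n) $ is invertible. The only points needing care — and the parts I expect to cost the most — are the bookkeeping of left versus right module structures so that ``the matrix of a bijective $ \mathbb{F} $-linear map over a division ring is invertible'' is legitimate, and checking that the Taylor remainder above is literally the object $ F((x-a_i)^{r_i}) $ of Definition \ref{def evaluation high degree}; both become routine once the identifications are spelled out.
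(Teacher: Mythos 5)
Your proposal is correct and follows exactly the route the paper intends: it deduces the theorem from Corollary \ref{cor non-conj multiple points are P-indep} together with Theorem \ref{th lagrange interpolation} (the paper states this derivation in one sentence without further detail). Your additional work --- identifying each quotient $\mathbb{F}[x;\sigma,\delta]/((x-a_i)^{r_i})$ with $\mathbb{F}^{r_i}$ via the Taylor expansion of Proposition \ref{prop taylor expansion} and reading off the confluent Vandermonde matrix in coordinates --- is a sound and complete filling-in of the details the paper leaves implicit.
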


\section{Multiplicities from linear products with a single zero} \label{sec multi from linear terms}

In this section, we turn to a different notion of multiplicities, which allows a set of multiple zeros to be P-independent if, and only if, the underlying set of zeros is P-independent (see Theorem \ref{th P-independence multiplicity seqs}). The following definition is inspired by Bolotnikov's works on \textit{spherical chains} of quaternions (see \cite[Lemma 4.1]{bolotnikov-zeros} or \cite[Th. 3.1]{bolotnikov-conf}). 
%

\begin{definition} [\textbf{Multiplicity sequences \cite{bolotnikov-zeros}}] \label{def multiplicity seq}
For $ r \in \mathbb{Z}_+ $, we say that $ \mathbf{a} = (a_1, $ $ a_2, \ldots, $ $ a_r) \in \mathbb{F}^r $ is a $ (\sigma,\delta) $-multiplicity sequence if $ a_1 $ is the only zero of
$$ P_{\mathbf{a}} = (x-a_r) \cdots (x-a_2) (x - a_1) \in \mathbb{F}[x;\sigma,\delta] . $$
\end{definition}

In contrast with the commutative case, a $ (\sigma,\delta) $-multiplicity sequence does not need to satisfy $ a_1 = a_2 = \ldots = a_r $. This was first noted by Leroy in \cite[Ex. 1.15.3]{leroy-noncommutative}. In Section \ref{sec multiplicity seqs}, we will provide sufficient and necessary conditions for a sequence to be a $ (\sigma,\delta) $-multiplicity sequence. In particular, the existence and (lack of) uniqueness of $ (\sigma,\delta) $-multiplicity sequences will be discussed in Corollary \ref{cor existence multiplicity seqs} and Section \ref{sec particular cases}.
%

One of the key properties of $ (\sigma,\delta) $-multiplicity sequences is their monotonicity.

\begin{proposition} [\textbf{Monotonicity}] \label{prop monotonicity multiplicity seqs}
If $ (a_1, a_2, \ldots, a_r) \in \mathbb{F}^r $ is a $ (\sigma,\delta) $-multiplicity sequence, then so are the sequences $ (a_1, a_2, \ldots, a_i) \in \mathbb{F}^i $, for $ i = 1,2, \ldots, r-1 $.
\end{proposition}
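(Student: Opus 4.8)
The plan is to prove the contrapositive-free direct statement by tracking what it means for $a_1$ to be the \emph{only} zero of $P_{\mathbf{a}}$, and showing that each truncation $P_{\mathbf{a}_i}$ (where $\mathbf{a}_i = (a_1, \ldots, a_i)$) inherits this property. First I would recall that $P_{\mathbf{a}_i}$ is a right divisor of $P_{\mathbf{a}}$: indeed $P_{\mathbf{a}} = (x - a_r) \cdots (x - a_{i+1}) P_{\mathbf{a}_i}$. So the key observation is the \emph{divisor-monotonicity of zeros}: if $Q$ divides $F$ on the right and $c$ is a zero of $Q$, then $c$ is a zero of $F$. This is immediate from the product rule (Lemma~\ref{lemma product rule}): writing $F = G Q$, from $Q(c) = 0$ we get $(GQ)(c) = 0$. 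Hence $Z(P_{\mathbf{a}_i}) \subseteq Z(P_{\mathbf{a}}) = \{a_1\}$, so $P_{\mathbf{a}_i}$ has \emph{at most} one zero.

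It remains to check that $P_{\mathbf{a}_i}$ actually \emph{has} the zero $a_1$ — i.e.\ that $Z(P_{\mathbf{a}_i})$ is nonempty and therefore equals $\{a_1\}$, which is precisely the condition for $\mathbf{a}_i$ to be a $(\sigma,\delta)$-multiplicity sequence. But $P_{\mathbf{a}_i} = (x - a_i)\cdots(x - a_2)(x - a_1)$ is right-divisible by $x - a_1$, and $a_1$ is a zero of $x - a_1$ (as $(x-a_1)(a_1) = 0$ by Definition~\ref{def evaluation high degree}), so again by the product rule $a_1$ is a zero of $P_{\mathbf{a}_i}$. Combining the two steps: $a_1 \in Z(P_{\mathbf{a}_i}) \subseteq \{a_1\}$, hence $Z(P_{\mathbf{a}_i}) = \{a_1\}$, which is exactly the definition of $\mathbf{a}_i$ being a $(\sigma,\delta)$-multiplicity sequence for each $i = 1, 2, \ldots, r-1$.

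I do not expect a genuine obstacle here; the statement is essentially unwinding Definition~\ref{def multiplicity seq} together with the product rule. The only point requiring a moment's care is making sure the ``at most one zero'' and ``at least the zero $a_1$'' halves are cleanly separated, and that the right-divisibility $P_{\mathbf{a}} = \big( (x-a_r)\cdots(x-a_{i+1}) \big) P_{\mathbf{a}_i}$ is invoked correctly (note that $(x-a_r)\cdots(x-a_{i+1})$ is interpreted as $1$ when $i = r$, though that case is excluded here since $i \le r-1$). One could alternatively phrase the whole argument as a single application of Proposition~\ref{prop properties alg geom}(item on right divisors) to $\mathcal{U} = \mathcal{U}_2$, but the direct product-rule argument above is shorter and self-contained.
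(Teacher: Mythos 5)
Your proof is correct: the paper states this proposition without proof, treating it as immediate from the definitions, and your argument (right-divisibility $P_{\mathbf{a}_i} \mid_r P_{\mathbf{a}}$ gives $Z(P_{\mathbf{a}_i}) \subseteq Z(P_{\mathbf{a}}) = \{a_1\}$, while the rightmost factor $x-a_1$ gives $a_1 \in Z(P_{\mathbf{a}_i})$) is exactly the intended justification. No issues.
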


From now on, we only consider the following definition of multiplicity, which extends Bolotnikov's definition for quaternionic polynomials \cite{bolotnikov-zeros}. 

\begin{definition} [\textbf{Multiplicity II \cite{bolotnikov-zeros}}] \label{def multiplicity II}
Given $ F \in \mathbb{F}[x;\sigma,\delta] $, $ r \in \mathbb{Z}_+ $, and a $ (\sigma,\delta) $-multiplicity sequence $ \mathbf{a} = (a_1, a_2, \ldots, a_r) \in \mathbb{F}^r $, we say that $ a_1 $ is a zero of $ F $ of multiplicity $ r $ via $ \mathbf{a} $ if the skew polynomial $ P_{\mathbf{a}} $ in (\ref{eq def multiplicity seq polynomial}) divides $ F $ on the right (i.e, $ F(P_\mathbf{a}) = 0 $ according to Definition \ref{def evaluation high degree}). We say that $ a \in \mathbb{F} $ is a zero of multiplicity $ r $ of $ F $ if it is a zero of multiplicity $ r $ of $ F $ via some $ (\sigma,\delta) $-multiplicity sequence starting from $ a $.
\end{definition}
%

Similarly to Proposition \ref{prop charact multiplicities derivatives I}, we have the following characterization of multiplicities based on Hasse derivatives (Definition \ref{def derivative}), which follows from Proposition \ref{prop taylor expansion}.

\begin{proposition} [\textbf{Derivative criterion II}] \label{prop charact multiplicities derivatives II}
Given $ F \in \mathbb{F}[x;\sigma,\delta] $, $ r \in \mathbb{Z}_+ $ and a $ (\sigma,\delta) $-multiplicity sequence $ \mathbf{a} = (a_1, a_2, \ldots, a_r) \in \mathbb{F}^r $, it holds that $ a_1 $ is a zero of $ F $ of multiplicity $ r $ via $ \mathbf{a} $ if, and only if, $ D_{\mathbf{a}_1} (F) = D_{\mathbf{a}_2} (F) = \ldots = D_{\mathbf{a}_r} (F) = 0 $, where $ \mathbf{a}_i = (a_1, a_2, \ldots, a_i) \in \mathbb{F}^i $, for $ i = 1,2, \ldots, r $.
\end{proposition}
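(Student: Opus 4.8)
The plan is to deduce this directly from the Taylor expansion in Proposition~\ref{prop taylor expansion}, exactly as the preceding sentence advertises. Fix $ F \in \mathbb{F}[x;\sigma,\delta] $, $ r \in \mathbb{Z}_+ $, and a $ (\sigma,\delta) $-multiplicity sequence $ \mathbf{a} = (a_1, a_2, \ldots, a_r) $. By Proposition~\ref{prop taylor expansion} applied to the tuple $ \mathbf{a} $, there is $ G \in \mathbb{F}[x;\sigma,\delta] $ with
$$ F = G P_{\mathbf{a}_r} + D_{\mathbf{a}_r}(F) P_{\mathbf{a}_{r-1}} + \cdots + D_{\mathbf{a}_2}(F) P_{\mathbf{a}_1} + D_{\mathbf{a}_1}(F), $$
where $ \mathbf{a}_i = (a_1, \ldots, a_i) $ and $ P_{\mathbf{a}_i} = (x-a_i)\cdots(x-a_1) $, with the convention $ P_{\mathbf{a}_0} = 1 $. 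I would first observe that by Definition~\ref{def multiplicity II}, $ a_1 $ is a zero of $ F $ of multiplicity $ r $ via $ \mathbf{a} $ if and only if $ P_{\mathbf{a}_r} = P_{\mathbf{a}} $ divides $ F $ on the right, i.e. $ F(P_{\mathbf{a}}) = 0 $.

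The "if" direction is immediate: if $ D_{\mathbf{a}_1}(F) = \cdots = D_{\mathbf{a}_r}(F) = 0 $, then the Taylor expansion collapses to $ F = G P_{\mathbf{a}_r} $, so $ P_{\mathbf{a}} $ divides $ F $ on the right. For the "only if" direction I would argue by induction on $ i $ that $ D_{\mathbf{a}_i}(F) = 0 $ for $ i = 1, 2, \ldots, r $, using the nested structure $ P_{\mathbf{a}_{i}} = (x-a_{i}) P_{\mathbf{a}_{i-1}} $. Assuming $ P_{\mathbf{a}} $ divides $ F $ on the right, in particular $ P_{\mathbf{a}_1} = x - a_1 $ divides $ F $ on the right, so $ F(a_1) = 0 $; since $ D_{\mathbf{a}_1}(F) = F(a_1) $ by Proposition~\ref{prop taylor expansion}, this gives the base case. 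For the inductive step, suppose $ D_{\mathbf{a}_1}(F) = \cdots = D_{\mathbf{a}_{i-1}}(F) = 0 $ for some $ 2 \le i \le r $. Then the Taylor expansion becomes
$$ F = G P_{\mathbf{a}_r} + D_{\mathbf{a}_r}(F) P_{\mathbf{a}_{r-1}} + \cdots + D_{\mathbf{a}_i}(F) P_{\mathbf{a}_{i-1}}, $$
and every term on the right-hand side except possibly $ D_{\mathbf{a}_i}(F) P_{\mathbf{a}_{i-1}} $ is divisible on the right by $ P_{\mathbf{a}_i} $, hence by $ P_{\mathbf{a}_{i-1}} $; since $ P_{\mathbf{a}} $ divides $ F $ on the right and $ P_{\mathbf{a}_{i-1}} $ divides $ P_{\mathbf{a}} $ (note $ i-1 \le r-1 < r $), $ P_{\mathbf{a}_{i-1}} $ also divides $ F $ on the right. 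Thus $ P_{\mathbf{a}_{i-1}} $ divides $ D_{\mathbf{a}_i}(F) P_{\mathbf{a}_{i-1}} $ trivially, which tells us nothing yet; instead I divide through by $ P_{\mathbf{a}_{i-1}} $ on the right (legitimate because $ P_{\mathbf{a}_{i-1}} $ divides each summand and divides $ F $) to obtain an identity of the form $ F = \widetilde{G} P_{\mathbf{a}_{i-1}} $ with $ \widetilde{G} = G (x-a_r)\cdots(x-a_i) + \cdots + D_{\mathbf{a}_i}(F) $, where all written terms beyond the constant $ D_{\mathbf{a}_i}(F) $ are divisible on the right by $ x - a_i $. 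Since $ P_{\mathbf{a}_i} = (x-a_i) P_{\mathbf{a}_{i-1}} $ divides $ F $ on the right, $ x - a_i $ divides $ \widetilde{G} $ on the right, i.e. $ \widetilde{G}(a_i) = 0 $; evaluating $ \widetilde{G} $ at $ a_i $ kills every term except the constant and yields $ D_{\mathbf{a}_i}(F) = 0 $, completing the induction.

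The one point requiring slight care — and the place I expect to spend the most attention — is the right-division manipulation in the inductive step: writing $ F = \widetilde{G} P_{\mathbf{a}_{i-1}} $ and identifying $ \widetilde{G} $ explicitly, then arguing that right-divisibility of $ F $ by $ (x-a_i) P_{\mathbf{a}_{i-1}} $ forces $ (x-a_i) \mid \widetilde{G} $ on the right. This uses that $ \mathbb{F}[x;\sigma,\delta] $ is a right Euclidean domain, so right quotients are unique: from $ F = \widetilde{G} P_{\mathbf{a}_{i-1}} $ and $ F = H (x-a_i) P_{\mathbf{a}_{i-1}} $ one cancels $ P_{\mathbf{a}_{i-1}} $ on the right to get $ \widetilde{G} = H (x-a_i) $. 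Alternatively, and perhaps more cleanly, one can avoid the explicit cancellation by noting that $ D_{\mathbf{a}_i}(F) $ is, by the uniqueness in Definition~\ref{def derivative}, exactly the $ \mathbb{F} $-coefficient appearing when $ F $ is divided on the right by $ P_{\mathbf{a}_i} $ with remainder of degree $ < i $ written in the monic-$ (i-1) $ form; once $ D_{\mathbf{a}_1}(F) = \cdots = D_{\mathbf{a}_{i-1}}(F) = 0 $ are known, right-divisibility of $ F $ by $ P_{\mathbf{a}_i} $ is equivalent to $ D_{\mathbf{a}_i}(F) = 0 $ directly from that uniqueness, which streamlines the induction considerably. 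Either route is routine given the Euclidean structure, so the proposition follows.
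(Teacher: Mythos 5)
Your argument is correct and takes the same route as the paper, which establishes this proposition simply by invoking the Taylor expansion of Proposition \ref{prop taylor expansion}. Your term-by-term induction for the ``only if'' direction could be short-circuited by observing that $ \sum_{i=1}^{r} D_{\mathbf{a}_i}(F) P_{\mathbf{a}_{i-1}} $ (with $ P_{\mathbf{a}_0} = 1 $) is the remainder of $ F $ upon right division by $ P_{\mathbf{a}} $, and it vanishes only if every coefficient does, since the $ P_{\mathbf{a}_{i-1}} $ are monic of pairwise distinct degrees --- but that is a simplification, not a correction.
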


We may also deduce the following monotonicity property.

\begin{proposition} [\textbf{Monotonicity}]
Let $ F \in \mathbb{F}[x;\sigma,\delta] $, and let $ \mathbf{a} = (a_1, a_2, \ldots, a_r) \in \mathbb{F}^r $ be a $ (\sigma,\delta) $-multiplicity sequence. If $ a_1 $ is a zero of $ F $ of multiplicity $ r $ via $ \mathbf{a} $, then it is also a zero of $ F $ of multiplicity $ i $ via $ \mathbf{a}_i = (a_1, a_2, \ldots, a_i) $, for $ i = 1,2, \ldots, r $.
\end{proposition}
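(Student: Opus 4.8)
The plan is to reduce this monotonicity statement for Multiplicity II directly to the already-established Derivative Criterion II (Proposition \ref{prop charact multiplicities derivatives II}) together with the monotonicity of multiplicity sequences (Proposition \ref{prop monotonicity multiplicity seqs}). First I would observe that, since $ \mathbf{a} = (a_1, a_2, \ldots, a_r) $ is a $ (\sigma,\delta) $-multiplicity sequence, Proposition \ref{prop monotonicity multiplicity seqs} guarantees that each truncation $ \mathbf{a}_i = (a_1, a_2, \ldots, a_i) $ is again a $ (\sigma,\delta) $-multiplicity sequence, for $ i = 1, 2, \ldots, r $. Hence the phrase ``$ a_1 $ is a zero of $ F $ of multiplicity $ i $ via $ \mathbf{a}_i $'' is meaningful, i.e., Definition \ref{def multiplicity II} applies to each $ \mathbf{a}_i $.

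Next I would invoke the hypothesis via the derivative criterion: by Proposition \ref{prop charact multiplicities derivatives II} applied to $ \mathbf{a} $, the assumption that $ a_1 $ is a zero of $ F $ of multiplicity $ r $ via $ \mathbf{a} $ is equivalent to $ D_{\mathbf{a}_1}(F) = D_{\mathbf{a}_2}(F) = \cdots = D_{\mathbf{a}_r}(F) = 0 $. Fixing any $ i \in \{ 1, 2, \ldots, r \} $, this in particular gives $ D_{\mathbf{a}_1}(F) = D_{\mathbf{a}_2}(F) = \cdots = D_{\mathbf{a}_i}(F) = 0 $. Applying Proposition \ref{prop charact multiplicities derivatives II} again, this time to the multiplicity sequence $ \mathbf{a}_i $, these vanishing conditions are precisely equivalent to $ a_1 $ being a zero of $ F $ of multiplicity $ i $ via $ \mathbf{a}_i $, which is the desired conclusion.

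There is essentially no obstacle here; the statement is a formal consequence of the two cited propositions, and the only point requiring a moment's care is to confirm that each $ \mathbf{a}_i $ is indeed a multiplicity sequence so that Proposition \ref{prop charact multiplicities derivatives II} is applicable to it — this is exactly what Proposition \ref{prop monotonicity multiplicity seqs} supplies. Alternatively, one could argue purely divisibility-theoretically: $ P_{\mathbf{a}} = (x - a_r) \cdots (x - a_{i+1}) P_{\mathbf{a}_i} $ shows that $ P_{\mathbf{a}_i} $ divides $ P_{\mathbf{a}} $ on the right, so if $ P_{\mathbf{a}} $ divides $ F $ on the right then so does $ P_{\mathbf{a}_i} $; but routing through the Hasse-derivative criterion keeps the exposition uniform with the surrounding results.
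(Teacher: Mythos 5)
Your proposal is correct, and it matches the route the paper implicitly intends: the proposition is stated immediately after the Derivative Criterion II with the words ``we may also deduce,'' and your argument (truncations are multiplicity sequences by Proposition \ref{prop monotonicity multiplicity seqs}, then apply Proposition \ref{prop charact multiplicities derivatives II} to $\mathbf{a}$ and to each $\mathbf{a}_i$) is exactly that deduction. The direct divisibility argument you mention at the end is equally valid and arguably even shorter.
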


We now use the general framework from Section \ref{sec high-degree zeros} for the type of multiplicities from Definition \ref{def multiplicity II}. To this end, we use in this section the universal set $ \mathcal{U}_2 $ from (\ref{eq universal set II}).
%

We will also use the concept of \textit{centralizers}, introduced in \cite[Eq. (3.1)]{lam-leroy}. 

\begin{definition} [\textbf{Centralizer \cite{lam-leroy}}] \label{def centralizer}
We define the $ (\sigma,\delta) $-centralizer of $ a \in \mathbb{F} $ (or simply centralizer) as the division subring of $ \mathbb{F} $ given by $ K^{\sigma,\delta}_a = \{ \beta \in \mathbb{F} \mid \sigma(\beta) a + \delta(\beta) = a \beta \} $.
\end{definition}

The following characterization is the main result of this section.

\begin{theorem} \label{th P-independence multiplicity seqs}
Let $ \mathbf{a}_i = (a_{i,1}, a_{i,2}, \ldots, a_{i,r_i}) \in \mathbb{F}^{r_i} $ be a $ (\sigma,\delta) $-multiplicity sequence, for $ i = 1,2, \ldots, n $. Assume that $ a_{1,1}, a_{2,1}, \ldots, a_{n,1} $ are distinct. Let also $ a_1, a_2, \ldots , a_\ell \in \mathbb{F} $ be pair-wise non-conjugate elements, $ n_i \in \mathbb{Z}_+ $ and $ \beta_{i,1}, \beta_{i,2}, \ldots, \beta_{i,n_i} \in \mathbb{F}^* $ such that
$$ \{ a_{1,1}, a_{2,1}, \ldots, a_{n,1} \} = \bigcup_{i=1}^\ell  \left\lbrace a_i^{\beta_{i,1}}, a_i^{\beta_{i,2}}, \ldots , a_i^{\beta_{i,n_i}} \right\rbrace , $$
for $ i = 1,2, \ldots, \ell $, and $ n = n_1 + n_2 + \cdots + n_\ell $. The following are equivalent:
\begin{enumerate}
\item
$ \Omega = \{ P_{\mathbf{a}_1}, P_{\mathbf{a}_2}, \ldots, P_{\mathbf{a}_n} \} \subseteq \mathcal{U}_2 $ is P-independent.
\item
$ \Psi = \{ a_{1,1}, a_{2,1}, \ldots, a_{n,1} \} \subseteq \mathbb{F} $ is P-independent.
\item
$ \beta_{i,1}, \beta_{i,2}, \ldots , \beta_{i,n_i} \in \mathbb{F}^* $ are right $ K^{\sigma,\delta}_{a_i} $-linearly independent, for $ i = 1,2, \ldots, \ell $.
\end{enumerate}
\end{theorem}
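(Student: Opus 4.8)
The plan is to prove $(1)\Rightarrow(2)$, $(2)\Leftrightarrow(3)$ and $(2)\Rightarrow(1)$; the first two are immediate and essentially all the work is in $(2)\Rightarrow(1)$.

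For $(1)\Rightarrow(2)$: since $x-a_{i,1}$ right-divides $P_{\mathbf{a}_i}$ and the $a_{i,1}$ are distinct, $\Omega$ and $\Psi$ both have size $n$ and $\Psi\le\Omega$ in the sense of Definition~\ref{def subseteq leq}, so $\Psi$ is P-independent by Corollary~\ref{cor monotonicity}. For $(2)\Leftrightarrow(3)$: within a single conjugacy class, P-independence of $\{a_i^{\beta_{i,1}},\dots,a_i^{\beta_{i,n_i}}\}$ is equivalent to right $K^{\sigma,\delta}_{a_i}$-linear independence of $\beta_{i,1},\dots,\beta_{i,n_i}$ by Lam and Leroy's classical results \cite{lam-leroy}, and across pairwise non-conjugate classes P-independence is additive by Lam's union theorem (the $r_1=\dots=r_n=1$ case of Theorem~\ref{th union}); combining these two facts yields $(2)\Leftrightarrow(3)$.

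The core is $(2)\Rightarrow(1)$, which I would prove by induction on $\sum_{i=1}^n(r_i-1)$ (with the number of sequences allowed to vary). If $\sum(r_i-1)=0$ then $\Omega=\Psi$ and there is nothing to do. Otherwise, after relabelling, $r_n\ge 2$; put $\mathbf{a}_n'=(a_{n,1},\dots,a_{n,r_n-1})$, a $(\sigma,\delta)$-multiplicity sequence by Proposition~\ref{prop monotonicity multiplicity seqs}, and set $\Omega'=\{P_{\mathbf{a}_1},\dots,P_{\mathbf{a}_{n-1}},P_{\mathbf{a}_n'}\}$ and $\Omega''=\{P_{\mathbf{a}_1},\dots,P_{\mathbf{a}_{n-1}}\}$. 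The first entries of $\Omega'$ are again the elements of $\Psi$ and those of $\Omega''$ form a subset of $\Psi$, so by the induction hypothesis $\Omega'$ and $\Omega''$ are P-independent, with $\deg(F_{\Omega'})=N-1$ and $\deg(F_{\Omega''})=N-r_n$. Since $F_\Omega$ is the least left common multiple of $F_{\Omega''}$ and $P_{\mathbf{a}_n}$, Lemma~\ref{lemma ores cross cut formula} gives $\deg(F_\Omega)=N-\deg(D)$, where $D$ is the greatest right common divisor of $F_{\Omega''}$ and $P_{\mathbf{a}_n}$. From the P-independence of $\Omega'$ and Lemma~\ref{lemma ores cross cut formula} one sees that $F_{\Omega''}$ and $P_{\mathbf{a}_n'}$ are right-coprime; hence so are $D$ and $P_{\mathbf{a}_n'}$, and as their least left common multiple still right-divides $P_{\mathbf{a}_n}=(x-a_{n,r_n})P_{\mathbf{a}_n'}$ we get $\deg(D)\le 1$, with $\deg(D)=1$ only when $D=x-a_{n,1}$ (the unique degree-one right divisor of $P_{\mathbf{a}_n}$, since $a_{n,1}$ is its only zero), i.e. only when $a_{n,1}$ is a zero of $F_{\Omega''}$. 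Thus $\Omega$ is P-independent as soon as $a_{n,1}$ is \emph{not} a zero of $F_{\Omega''}$.

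Showing $a_{n,1}\notin Z(F_{\Omega''})$ is, I expect, the main obstacle. By the P-independence of $\Psi$, $a_{n,1}$ does not lie in the P-closure of $\{a_{1,1},\dots,a_{n-1,1}\}$, so it would suffice to know that $Z(F_{\Omega''})$ equals that P-closure — equivalently (via the Taylor expansion of Proposition~\ref{prop taylor expansion} and the criterion of Proposition~\ref{prop charact multiplicities derivatives II}) that passing from simple zeros to multiplicity-sequence zeros does not enlarge the zero set of the minimal skew polynomial. In the commutative case this is trivial ($x-a$ and $(x-a)^r$ have the same zero set), but for general skew polynomials it requires the explicit structure of $(\sigma,\delta)$-multiplicity sequences established in Section~\ref{sec multiplicity seqs} (notably that every entry of such a sequence is conjugate to the first). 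Grouping $\Omega$ by the conjugacy classes of the first entries, so that the class-wise minimal skew polynomials become pairwise right-coprime and $\deg(F_\Omega)$ splits as a sum over classes, reduces this to the single-class situation and should make the argument tractable; this is the only input needed beyond the abstract machinery of Section~\ref{sec high-degree zeros}.
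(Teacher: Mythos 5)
Your reduction of $(2)\Rightarrow(1)$ is exactly the paper's: the same induction, the same auxiliary sets $\Omega'$ and $\Omega''$ (the paper's $\Phi$), and the same two applications of Ore's degree relation (Lemma \ref{lemma ores cross cut formula}). Everything you actually carry out is correct, and the handling of $(1)\Rightarrow(2)$ and $(2)\Leftrightarrow(3)$ matches the paper as well. The problem is that you stop one sentence short of being done and then declare a ``main obstacle'' that does not exist. You have already shown, from the P-independence of $\Omega'$ and Lemma \ref{lemma ores cross cut formula}, that $F_{\Omega''}$ and $P_{\mathbf{a}_n'}$ are right-coprime. But $x-a_{n,1}$ is the rightmost factor of $P_{\mathbf{a}_n'}=(x-a_{n,r_n-1})\cdots(x-a_{n,1})$, so if $a_{n,1}$ were a zero of $F_{\Omega''}$ then $x-a_{n,1}$ would be a common right divisor of $F_{\Omega''}$ and $P_{\mathbf{a}_n'}$ of degree $1$, contradicting that coprimality. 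Hence $\deg(D)=1$ is impossible, $D=1$, and $\deg(F_\Omega)=N$; the proof closes with no further input. This is precisely how the paper finishes (phrased there as: $D=x-a_{n,1}$ would be a common right divisor of $P_{\mathbf{a}_n'}$ and $F_\Phi$, forcing $\deg(F_{\Omega'})\leq\sum_i r_i-2$ by Ore's relation, contradicting the induction hypothesis).

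The route you sketch for the phantom obstacle would, moreover, be illegitimate: identifying $Z(F_{\Omega''})$ with the P-closure of $\{a_{1,1},\ldots,a_{n-1,1}\}$ and invoking the structure theory of Section \ref{sec multiplicity seqs} (Theorem \ref{th charact multiplicity seqs in alg conj}) imports the hypotheses that $\sigma$ is surjective and that all conjugacy classes are algebraic, neither of which is assumed in Theorem \ref{th P-independence multiplicity seqs}; it would also be circular, since the results of Section \ref{sec multiplicity seqs} are proved using this theorem. One further small point: your induction should explicitly dispose of the case $n=1$ (where $\Omega''=\varnothing$ and singletons are automatically P-independent), as the paper does, before assuming $n\geq 2$ and $r_n\geq 2$.
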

\begin{proof}
The equivalence between items 2 and 3 is due to Lam and Leroy (see \cite[Cor. 21]{lam} and \cite[Th. 4.5]{lam-leroy}). We now prove the equivalence between items 1 and 2. First, if $ \Omega $ is P-independent, then so is $ \Psi $ by Corollary \ref{cor monotonicity}. Hence, we only need to prove the reversed implication. We proceed by induction on $ \sum_{i=1}^n r_i $, being the case $ n = r_1 = 1 $ trivial. Next, $ \Omega $ and $ \Psi $ are always P-independent if $ n = 1 $. Furthermore, in the case $ r_1 = r_2 = \ldots = r_n = 1 $, it holds by definition that $ \Omega $ is P-independent if, and only if, so is $ \Psi $. Therefore, we may assume without loss of generality that $ n \geq 2 $ and $ r_n \geq 2 $. 

Since $ r_n \geq 2 $, we may define $ \mathbf{a}_n^\prime = (a_{n,1}, a_{n,2}, \ldots, a_{n,r_n-1}) \in \mathbb{F}^{r_n-1} $, which is a $ (\sigma,\delta) $-multiplicity sequence by Proposition \ref{prop monotonicity multiplicity seqs}. Define $ \Omega^\prime = \{ P_{\mathbf{a}_1}, P_{\mathbf{a}_2}, \ldots, P_{\mathbf{a}_{n-1}}, P_{\mathbf{a}_n^\prime} \} $. Since $ n \geq 2 $, we may also define $ \Phi = \{ P_{\mathbf{a}_1}, P_{\mathbf{a}_2}, \ldots, P_{\mathbf{a}_{n-1}} \} $. By induction hypothesis, both $ \Omega^\prime $ and $ \Phi $ are P-independent, which means by Definition \ref{def P-independence} that
$$ \deg(F_{\Omega^\prime}) = \sum_{i=1}^n r_i - 1 \quad \textrm{and} \quad \deg(F_{\Phi}) = \sum_{i=1}^{n-1} r_i. $$

Assume now that $ \Omega $ is not P-independent. Since $ \Omega^\prime \leq \Omega $, then $ F_{\Omega^\prime} $ divides $ F_\Omega $ on the right. By Proposition \ref{prop deg of min skew pol upper bound by sum}, we deduce that $ F_{\Omega^\prime} = F_\Omega $. In other words, 
$$ \deg(F_\Omega) = \sum_{i=1}^n r_i - 1 = \deg(F_\Phi) + \deg(P_{\mathbf{a}_n}) - 1. $$
Observe that, by definition, $ F_\Omega $ is the least left common multiple of $ F_\Phi $ and $ P_{\mathbf{a}_n} $. By Lemma \ref{lemma ores cross cut formula}, we deduce that the greatest right common divisor of $ F_\Phi $ and $ P_{\mathbf{a}_n} $ has degree exactly $ 1 $, that is, it is of the form $ D = x - b \in \mathbb{F}[x;\sigma,\delta] $, for some $ b \in \mathbb{F} $. Now, since $ D $ divides $ P_{\mathbf{a}_n} $ on the right and $ \mathbf{a}_n $ is a $ (\sigma,\delta) $-multiplicity sequence, we conclude that $ b = a_{n,1} $. However, this also means that $ D $ divides $ P_{\mathbf{a}_n^\prime} $ on the right. In other words, $ D $ is a right common divisor of both $ P_{\mathbf{a}_n^\prime} $ and $ F_\Phi $. However, the least left common multiple of $ P_{\mathbf{a}_n^\prime} $ and $ F_\Phi $ is $ F_{\Omega^\prime} $ by definition. Thus, using again Lemma \ref{lemma ores cross cut formula}, we deduce that 
$$ \deg(F_{\Omega^\prime}) \leq \deg(P_{\mathbf{a}_n^\prime}) + \deg(F_\Phi) - 1 = \sum_{i=1}^n r_i - 2, $$
which contradicts the fact that $ \Omega^\prime $ is P-independent, and we are done.
\end{proof}

We now derive the following particular case of Theorem \ref{th upper bound multiplicities by degree general} from Theorem \ref{th P-independence multiplicity seqs}.

\begin{theorem} [\textbf{Degree bound II}] \label{th upper bound multiplicities by degree II}
Let $ \Psi = \{a_1, a_2, \ldots, a_n \} \subseteq \mathbb{F} $ be a set of size $ n \in \mathbb{Z}_+ $ and let $ r_1, r_2, \ldots, r_n \in \mathbb{Z}_+ $. Then $ \Psi $ is P-independent if, and only if, for every non-zero $ F \in \mathbb{F}[x;\sigma,\delta] $ that has $ a_i $ as a zero of multiplicity $ r_i $, for $ i = 1,2, \ldots, n $, it holds that
$$ \sum_{i=1}^n r_i \leq \deg(F). $$
Furthermore, when this is the case, then equality holds for $ F $ if, and only if, $ F = a F_\Omega $, where $ a \in \mathbb{F}^* $, $ F_\Omega $ is the least left common multiple of $ P_{\mathbf{a}_1} , P_{\mathbf{a}_2} ,\ldots, P_{\mathbf{a}_n} $, and $ \mathbf{a}_i \in \mathbb{F}^{r_i} $ is a $ (\sigma,\delta) $-multiplicity sequence starting at $ a_i $, for $ i = 1,2, \ldots, n $.
\end{theorem}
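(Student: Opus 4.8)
The plan is to obtain this theorem as an assembly of two earlier results, once everything is translated into statements about the set $ \Omega = \{ P_{\mathbf{a}_1}, P_{\mathbf{a}_2}, \ldots, P_{\mathbf{a}_n} \} \subseteq \mathcal{U}_2 $ attached to a choice of $ (\sigma,\delta) $-multiplicity sequences $ \mathbf{a}_i $ starting at $ a_i $: namely the characterization of P-independence of such $ \Omega $ via P-independence of the underlying set $ \Psi $ (Theorem~\ref{th P-independence multiplicity seqs}, equivalence of items~1 and~2) and the abstract degree bound in a universal set (Theorem~\ref{th upper bound multiplicities by degree general}).

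First I would prove the implication from P-independence of $ \Psi $ to the degree bound. Assume $ \Psi $ is P-independent and let $ F \neq 0 $ have $ a_i $ as a zero of multiplicity $ r_i $ for each $ i $. By Definition~\ref{def multiplicity II}, for each $ i $ there is a $ (\sigma,\delta) $-multiplicity sequence $ \mathbf{a}_i \in \mathbb{F}^{r_i} $ starting at $ a_i $ such that $ P_{\mathbf{a}_i} $ divides $ F $ on the right; note these sequences may depend on $ F $. Put $ \Omega = \{ P_{\mathbf{a}_1}, \ldots, P_{\mathbf{a}_n} \} $; since the $ a_i $ are distinct the $ P_{\mathbf{a}_i} $ have distinct zero sets, hence are pairwise distinct, so $ \Omega $ has size $ n $ and $ \sum_{P \in \Omega} \deg(P) = \sum_{i=1}^n r_i $. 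Because the starting points $ a_{1,1}, \ldots, a_{n,1} $ are distinct, Theorem~\ref{th P-independence multiplicity seqs} gives that $ \Omega $ is P-independent (since $ \Psi $ is). As $ P_{\mathbf{a}_i} $ divides $ F $ on the right for each $ i $, we have $ F \in I(\Omega) $, so Theorem~\ref{th upper bound multiplicities by degree general} yields $ \sum_{i=1}^n r_i = \sum_{P \in \Omega} \deg(P) \leq \deg(F) $, with equality if and only if $ F = a F_\Omega $ for some $ a \in \mathbb{F}^* $, where $ F_\Omega $ is the least left common multiple of $ P_{\mathbf{a}_1}, \ldots, P_{\mathbf{a}_n} $. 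This is exactly the stated equality condition, and the converse half of it is immediate: if $ F = a F_\Omega $ for such an $ \Omega $, then each $ P_{\mathbf{a}_i} $ divides $ F $ on the right, so $ F $ has $ a_i $ as a zero of multiplicity $ r_i $, and $ \deg(F) = \deg(F_\Omega) = \sum_i r_i $ by P-independence of $ \Omega $.

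For the reverse implication I would argue by contrapositive. Suppose $ \Psi $ is not P-independent. Fix, for each $ i $, a $ (\sigma,\delta) $-multiplicity sequence $ \mathbf{a}_i \in \mathbb{F}^{r_i} $ starting at $ a_i $ (such sequences exist by Corollary~\ref{cor existence multiplicity seqs}), and let $ \Omega = \{ P_{\mathbf{a}_1}, \ldots, P_{\mathbf{a}_n} \} $, again of size $ n $. This set is finite, hence algebraic, its least left common multiple $ F_\Omega $ being a non-zero element of $ I(\Omega) $ (in $ \mathbb{F}[x;\sigma,\delta] $, a domain with principal left ideals, non-zero left ideals intersect non-trivially). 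Since $ \Psi $ is not P-independent, Theorem~\ref{th P-independence multiplicity seqs} shows $ \Omega $ is not P-independent either, so Proposition~\ref{prop deg of min skew pol upper bound by sum} gives the strict inequality $ \deg(F_\Omega) < \sum_{P \in \Omega} \deg(P) = \sum_{i=1}^n r_i $. As $ P_{\mathbf{a}_i} $ divides $ F_\Omega $ on the right, $ F_\Omega $ is a non-zero skew polynomial having $ a_i $ as a zero of multiplicity $ r_i $ (via $ \mathbf{a}_i $) for every $ i $, yet $ \deg(F_\Omega) < \sum_i r_i $; this contradicts the assumed degree bound and finishes the contrapositive.

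I do not expect a genuine obstacle here, as the proof is essentially bookkeeping around Theorems~\ref{th P-independence multiplicity seqs} and~\ref{th upper bound multiplicities by degree general} together with Proposition~\ref{prop deg of min skew pol upper bound by sum}. The only point worth care is that, in the first implication, the witnessing multiplicity sequences are chosen per $ F $, so Theorem~\ref{th P-independence multiplicity seqs} must be invoked afresh for each such choice; this causes no difficulty because its equivalent condition ``$ \Psi $ is P-independent'' depends only on the fixed starting points $ a_1, \ldots, a_n $ and not on the tails of the sequences. The reverse implication additionally uses the existence of at least one multiplicity sequence of each prescribed length starting at each $ a_i $, which is provided by Corollary~\ref{cor existence multiplicity seqs}.
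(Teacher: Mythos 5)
Your proof is correct and follows exactly the route the paper intends: the paper gives no separate proof of this theorem, merely stating that it is derived from Theorem \ref{th upper bound multiplicities by degree general} via Theorem \ref{th P-independence multiplicity seqs}, which is precisely the assembly you carry out (with Proposition \ref{prop deg of min skew pol upper bound by sum} supplying the strict inequality in the contrapositive). The one point worth recording is that your reverse implication leans on Corollary \ref{cor existence multiplicity seqs} --- a forward reference whose hypothesis $ \dim^R_{K^{\sigma,\delta}_{a_i}}(\mathbb{F}) < \infty $ does not appear in the theorem's statement --- but this exposes a mild imprecision in the statement itself rather than in your argument, since without multiplicity sequences of the prescribed lengths the degree-bound condition is vacuous and the ``only if'' direction could not hold anyway.
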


We may analogously rephrase Theorem \ref{th lagrange interpolation} as a Hermite interpolation theorem.

\begin{theorem} [\textbf{Hermite interpolation II}] \label{th hermite interpolation II}
Given a set $ \Psi = \{a_1, a_2, \ldots, a_n \} \subseteq \mathbb{F} $ of size $ n \in \mathbb{Z}_+ $ and given $ r_1, r_2, \ldots, r_n \in \mathbb{Z}_+ $, the following are equivalent:
\begin{enumerate}
\item
$ \Psi $ is P-independent.
\item
For $ b_{i,j} \in \mathbb{F} $ and $ (\sigma,\delta) $-multiplicity sequences $ \mathbf{a}_{i,j} = (a_{i,1}, a_{i,2}, $ $ \ldots, a_{i,j}) $ with $ a_{i,1} = a_i $, for $ j = 1,2, \ldots, r_i $ and $ i = 1,2, \ldots, n $, there exists a unique $ F \in \mathbb{F}[x;\sigma,\delta] $ with $ \deg(F) < \sum_{i=1}^n r_i $ such that $ D_{\mathbf{a}_{i,j}}(F) = b_{i,j} $, for $ j = 1,2,\ldots, r_i $ and $ i = 1,2, \ldots, n $.
\item
For all $ (\sigma,\delta) $-multiplicity sequences $ \mathbf{a}_i = (a_{i,1} , a_{i,2}, \ldots, a_{i,r_i}) \in \mathbb{F}^{r_i} $ such that $ a_{i,1} = a_i $, for $ i = 1,2, \ldots, n $, and setting $ N = \sum_{i=1}^n r_i $, the $ N \times N $ confluent Vandermonde matrix $ V_N(\mathbf{a}_1, \mathbf{a}_2, \ldots, \mathbf{a}_n) $ from Definition \ref{def confluent vandermonde} is invertible.
\end{enumerate}
\end{theorem}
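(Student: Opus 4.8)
The plan is to reduce everything to the already-proved equivalence between items 1 and 2 of Theorem~\ref{th P-independence multiplicity seqs}, since that theorem precisely says that a set of multiplicity-sequence polynomials $\{P_{\mathbf{a}_1},\ldots,P_{\mathbf{a}_n}\}$ is P-independent if and only if the underlying set of base points $\{a_1,\ldots,a_n\}$ is P-independent. The three items here are then the specializations of the abstract Theorem~\ref{th upper bound multiplicities by degree general} (degree bound), Theorem~\ref{th lagrange interpolation} (Lagrange interpolation), and Definition~\ref{def confluent vandermonde} (confluent Vandermonde matrices) to the universal set $\mathcal{U}_2$, once one knows that the relevant $\Omega \subseteq \mathcal{U}_2$ is P-independent iff $\Psi$ is.

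Concretely, I would argue as follows. For $1 \Rightarrow 2$: fix for each $i$ a $(\sigma,\delta)$-multiplicity sequence $\mathbf{a}_i \in \mathbb{F}^{r_i}$ starting at $a_i$ (these exist, e.g.\ the constant sequence $(a_i,\ldots,a_i)$, which is trivially a multiplicity sequence, or use Corollary~\ref{cor existence multiplicity seqs}), and set $\Omega = \{P_{\mathbf{a}_1},\ldots,P_{\mathbf{a}_n}\} \subseteq \mathcal{U}_2$. By Theorem~\ref{th P-independence multiplicity seqs}, $\Omega$ is P-independent, so $I(\Omega) = (F_\Omega)$ with $F_\Omega$ the least left common multiple of the $P_{\mathbf{a}_i}$ and $\deg(F_\Omega) = \sum_i r_i$. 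If $F \neq 0$ has each $a_i$ as a zero of multiplicity $r_i$, then by Definition~\ref{def multiplicity II} there is some multiplicity sequence $\mathbf{b}_i$ starting at $a_i$ with $P_{\mathbf{b}_i} \mid F$ on the right; the subtlety is that $\mathbf{b}_i$ need not equal the chosen $\mathbf{a}_i$. I would handle this by noting that $\Psi$ being P-independent forces $\{P_{\mathbf{b}_1},\ldots,P_{\mathbf{b}_n}\}$ to be P-independent too (again Theorem~\ref{th P-independence multiplicity seqs}, since the base points are the same), so its minimal skew polynomial also has degree $\sum_i r_i$ and divides $F$ on the right; hence $\sum_i r_i \le \deg(F)$, and equality gives $F = aF_\Omega'$. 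Since any two least left common multiples of sequences with the same base points and same lengths have the same degree and both divide each other through $F$ when equality holds, one gets the stated characterization of equality. For the converse $\neg 2 \Rightarrow \neg(\text{bound})$: if $\Psi$ is not P-independent, then by Theorem~\ref{th P-independence multiplicity seqs} $\Omega$ is not P-independent, and $F_\Omega \in I(\Omega)$ is a nonzero skew polynomial with each $a_i$ a zero of multiplicity $r_i$ but $\deg(F_\Omega) < \sum_i r_i$ by Proposition~\ref{prop deg of min skew pol upper bound by sum} combined with failure of equality in Definition~\ref{def P-independence}.

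The Hermite interpolation statement ($1 \Leftrightarrow 2$ of that theorem, i.e.\ the existence-and-uniqueness claim) I would get directly from Theorem~\ref{th lagrange interpolation} applied with $\mathcal{U} = \mathcal{U}_2$ and $P_i = P_{\mathbf{a}_i}$: P-independence of $\Psi$ is equivalent to P-independence of $\Omega = \{P_{\mathbf{a}_1},\ldots,P_{\mathbf{a}_n}\}$ by Theorem~\ref{th P-independence multiplicity seqs}, which by Theorem~\ref{th lagrange interpolation} is equivalent to the map $\phi: \mathbb{F}[x;\sigma,\delta]_N \to \prod_i \mathbb{F}[x;\sigma,\delta]/(P_{\mathbf{a}_i})$, $\phi(F) = (F(P_{\mathbf{a}_i}))_i$, being a left $\mathbb{F}$-vector space isomorphism with $N = \deg(F_\Omega) = \sum_i r_i$; then I translate the residues $F(P_{\mathbf{a}_i}) \in \mathbb{F}[x;\sigma,\delta]/(P_{\mathbf{a}_i})$ into Hasse derivative data $(D_{\mathbf{a}_{i,1}}(F),\ldots,D_{\mathbf{a}_{i,r_i}}(F))$ via the Taylor expansion of Proposition~\ref{prop taylor expansion}, which expresses $F(P_{\mathbf{a}_i})$ as $\sum_{j=1}^{r_i} D_{\mathbf{a}_{i,j}}(F) P_{\mathbf{a}_{i,j-1}}$, an $\mathbb{F}$-linear change of coordinates on the quotient (the polynomials $1, P_{\mathbf{a}_{i,1}}, \ldots, P_{\mathbf{a}_{i,r_i-1}}$ form a left $\mathbb{F}$-basis of $\mathbb{F}[x;\sigma,\delta]/(P_{\mathbf{a}_i})$ since they are monic of degrees $0,1,\ldots,r_i-1$). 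Finally, the confluent Vandermonde formulation is just the matrix of $\phi$ (after this change of coordinates) with respect to the bases $\{1,x,\ldots,x^{N-1}\}$ of $\mathbb{F}[x;\sigma,\delta]_N$ and the Hasse-derivative coordinates on the target: by Definition~\ref{def confluent vandermonde} its $(k,\cdot)$ row records $D_{\mathbf{a}_{i,j}}(x^{k-1})$, so invertibility of $V_N(\mathbf{a}_1,\ldots,\mathbf{a}_n)$ is equivalent to $\phi$ being an isomorphism; I would add the observation that since this holds for \emph{one} choice of multiplicity sequences iff $\Psi$ is P-independent, and P-independence of $\Psi$ does not reference the sequences, it holds for \emph{all} choices, giving the ``for all'' quantifier in item 3.

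The main obstacle is the bookkeeping around the non-uniqueness of $(\sigma,\delta)$-multiplicity sequences: one must be careful that ``$a_i$ is a zero of $F$ of multiplicity $r_i$'' allows a sequence-dependent witness $P_{\mathbf{b}_i}$, and that the least left common multiple $F_\Omega$ appearing in the equality case is independent (up to the degree, and up to which $F$ achieve equality) of the chosen sequences. This is resolved cleanly by the observation above — P-independence of the base-point set $\Psi$ is a property of $\Psi$ alone, so Theorem~\ref{th P-independence multiplicity seqs} makes \emph{every} lift to $\mathcal{U}_2$ with the prescribed lengths P-independent simultaneously — so no genuinely new ideas are needed beyond this theorem, Proposition~\ref{prop taylor expansion}, Theorem~\ref{th upper bound multiplicities by degree general}, and Theorem~\ref{th lagrange interpolation}.
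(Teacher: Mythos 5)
Your proposal is correct and follows essentially the route the paper intends: the paper gives no separate proof of Theorem~\ref{th hermite interpolation II}, presenting it as the specialization of Theorem~\ref{th lagrange interpolation} to $\mathcal{U}_2$ via the equivalence of items 1 and 2 in Theorem~\ref{th P-independence multiplicity seqs}, with Proposition~\ref{prop taylor expansion} supplying the change of coordinates from residues modulo $P_{\mathbf{a}_i}$ to the Hasse-derivative data and hence to the confluent Vandermonde matrix. One remark to correct, though it is not load-bearing here since the multiplicity sequences are quantified in items 2 and 3: your parenthetical that the constant sequence $(a_i,\ldots,a_i)$ is ``trivially a multiplicity sequence'' is false in general and contradicts Example~\ref{ex power of linear term is no multiplicity} and Example~\ref{ex multiplicity over rational function fields}; existence must instead come from Corollary~\ref{cor existence multiplicity seqs}, which requires the relevant conjugacy class to be algebraic.
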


\section{Multiplicity sequences in algebraic conjugacy classes} \label{sec multiplicity seqs}

In this section, we characterize $ (\sigma,\delta) $-multiplicity sequences (Definition \ref{def multiplicity seq}) over algebraic conjugacy classes assuming that $ \sigma $ is surjective, see Theorem \ref{th charact multiplicity seqs in alg conj}. Then, in Theorem \ref{th min skew pol of conjugacy class with mult}, we study what happens when all points in a single conjugacy class are zeros of the same multiplicity of a given skew polynomial.

We start with some auxiliary lemmas. The following result is \cite[Cor. 6.3]{lam-leroy-wedI}.

\begin{lemma} [\textbf{\cite{lam-leroy-wedI}}] \label{lemma algebraic conj class 1}
Let $ a \in \mathbb{F} $ and assume that its conjugacy class $ C^{\sigma,\delta}(a) \subseteq \mathbb{F} $ is algebraic, that is, $ I(C^{\sigma,\delta}(a)) \neq \{ 0 \} $ (Definition \ref{def algebraic sets}). If a skew polynomial $ F \in \mathbb{F}[x;\sigma,\delta] $ has $ a $ as a left zero (that is, $ x-a $ divides $ F $ on the left), then $ F $ has some element $ b \in C^{\sigma,\delta}(a) $ as a right zero (that is, $ x-b $ divides $ F $ on the right).
\end{lemma}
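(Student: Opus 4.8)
The plan is to exploit the algebraicity hypothesis on $ C^{\sigma,\delta}(a) $ together with the fundamental relationship between left and right zeros of a skew polynomial, which is governed by the minimal skew polynomial of the conjugacy class. Write $ g = F_{C^{\sigma,\delta}(a)} \in \mathbb{F}[x;\sigma,\delta] $ for the minimal skew polynomial of the (algebraic) conjugacy class $ C^{\sigma,\delta}(a) $; this is a nonzero monic skew polynomial by hypothesis, and a standard fact from Lam–Leroy's theory is that $ g $ is precisely the (monic) least left common multiple of all $ x - b $ with $ b \in C^{\sigma,\delta}(a) $, that $ g $ is a two-sided (invariant) element of $ \mathbb{F}[x;\sigma,\delta] $, and that the right zeros of $ g $ are exactly the elements of $ C^{\sigma,\delta}(a) $. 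The first step is to record these properties, citing the relevant results on algebraic conjugacy classes from \cite{lam-leroy, algebraic-conjugacy, lam-leroy-wedI}.

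Next I would translate the hypothesis ``$ a $ is a left zero of $ F $'' into a statement about right divisibility. Since $ x - a $ divides $ F $ on the left, we may write $ F = (x-a) H $ for some $ H \in \mathbb{F}[x;\sigma,\delta] $. The key observation is that $ x - a $ divides $ g $ on the \emph{left} as well: indeed $ g $ vanishes (on the right) at every point of $ C^{\sigma,\delta}(a) $, and because $ g $ is invariant (two-sided), its left zeros coincide with its right zeros, so in particular $ a $ is a left zero of $ g $, i.e. $ g = (x-a) g' $ for some $ g' $ with $ \deg(g') = \deg(g) - 1 $.

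The heart of the argument is then a degree/divisibility count combining $ F $ and $ g $. Consider the least left common multiple $ M $ of $ F $ and $ g $; since $ x - a $ divides both $ F $ and $ g $ on the left, the greatest \emph{left} common divisor $ D_\ell $ of $ F $ and $ g $ has degree at least $ 1 $. Now I would pass to the right-hand side: using that $ g $ is invariant, right division by $ g $ and left division by $ g $ define the same quotient ring $ \mathbb{F}[x;\sigma,\delta]/(g) $, and $ F $ is a right zero divisor modulo $ g $ precisely because $ \gcd $ considerations force the \emph{right} greatest common divisor $ D = \gcrd(F, g) $ to be nontrivial (this uses Lemma~\ref{lemma ores cross cut formula} applied to $ F $ and $ g $ and the fact that left-nontriviality of their common divisor forces, via invariance of $ g $, the least left common multiple to have degree strictly below $ \deg(F) + \deg(g) $). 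Since $ D $ is a right divisor of $ g $ of degree $ \geq 1 $, and all right divisors of $ g $ of degree $ \geq 1 $ have their right zeros inside $ C^{\sigma,\delta}(a) $, any monic right divisor $ x - b $ of $ D $ (which exists since $ \mathbb{F}[x;\sigma,\delta] $ is a right Euclidean domain, so $ D $ has a right root $ b $) satisfies $ b \in C^{\sigma,\delta}(a) $; and $ x - b $ divides $ D $, hence $ F $, on the right, so $ b $ is a right zero of $ F $. This produces the desired $ b \in C^{\sigma,\delta}(a) $.

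I expect the main obstacle to be justifying cleanly that left-nontriviality of the common divisor of $ F $ and $ g $ transfers to right-nontriviality — that is, making rigorous the step ``$ \deg(\lclm(F,g)) < \deg(F) + \deg(g) $.'' The natural route is the invariance (two-sidedness) of $ g $: because $ (g) $ is a two-sided ideal, the image of $ F $ in $ \mathbb{F}[x;\sigma,\delta]/(g) $ is a zero divisor on the left iff it is one on the right, and $ (x-a) H \equiv 0 $ is obstructed on the left since $ x-a \mid g $ on the left with $ \deg(g/( x-a)) < \deg(g) $. One must be slightly careful that $ \mathbb{F}[x;\sigma,\delta]/(g) $ need not be simple, but it is a finite-dimensional $ \mathbb{F} $-algebra (as $ \deg(g) < \infty $), which suffices for the zero-divisor equivalence. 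Alternatively, one can avoid this entirely and argue directly: from $ F = (x-a)H $ and $ g = (x-a)g' $, the skew polynomial $ g' $ and $ H $ have $ \gcrd $ whose behavior can be analyzed by induction on $ \deg(F) $, peeling off linear left factors and using Lemma~\ref{lemma algebraic conj class 1}'s conjugacy-class structure at each step; this inductive variant may in fact be the smoother write-up.
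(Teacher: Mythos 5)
The paper does not actually prove this lemma; it is imported verbatim from \cite{lam-leroy-wedI} (Cor.~6.3 there), so your argument can only be judged on its own terms. Your overall strategy --- compare $F$ with $g=F_{C^{\sigma,\delta}(a)}$, use the invariance of $g$ together with Ore's degree relation to force a nontrivial common right divisor, and then extract a linear right factor lying in $C^{\sigma,\delta}(a)$ --- is the right one, but two steps do not go through as written. First, the left-to-right transfer, which you yourself flag as the main obstacle, is never actually established. You reduce it to ``the greatest \emph{left} common divisor of $F$ and $g$ is nontrivial, hence so is the greatest \emph{right} common divisor,'' but (i) greatest common left divisors need not exist unless $\sigma$ is surjective (the ring is only right Euclidean in general, and the lemma makes no surjectivity assumption), (ii) the claim that the left zeros of an invariant polynomial coincide with its right zeros is itself unproved, and (iii) the quotient $\mathbb{F}[x;\sigma,\delta]/(g)$ is a finite-dimensional \emph{left} $\mathbb{F}$-vector space but not an $\mathbb{F}$-algebra ($\mathbb{F}$ is not central in it), so the ``finite-dimensional algebra implies left zero divisor iff right zero divisor'' shortcut does not apply as stated. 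Second, your justification that the greatest common right divisor $D$ of $F$ and $g$ has a right root ``since $\mathbb{F}[x;\sigma,\delta]$ is a right Euclidean domain'' is a non sequitur: right Euclidean domains contain irreducible polynomials of degree at least $2$ with no right roots. What is needed is that $F_{C^{\sigma,\delta}(a)}$ is a Wedderburn polynomial and that every monic right divisor of degree at least $1$ of a Wedderburn polynomial is again Wedderburn, hence has a right root; this is a theorem of \cite{lam-leroy-wedI}, not a consequence of Euclideanness.

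Both gaps close with one identity that avoids left divisors entirely. Since $a\in C^{\sigma,\delta}(a)=Z(g)$ you may write $g=q(x-a)$ with $\deg(q)=\deg(g)-1$; combining with $F=(x-a)H$ gives $qF=q(x-a)H=gH$, and since the ideal $(g)=I(C^{\sigma,\delta}(a))$ is two-sided (Lemma~\ref{lemma ideal of conjugacy twosided}) one has $gH=\widetilde{H}g$ for some $\widetilde{H}$ with $\deg(\widetilde{H})=\deg(H)$. Hence $qF=\widetilde{H}g$ is a nonzero common left multiple of $F$ and $g$ of degree $\deg(F)+\deg(g)-1$, and Lemma~\ref{lemma ores cross cut formula} forces the greatest common right divisor $D$ of $F$ and $g$ to have degree at least $1$. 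Since $D$ right-divides $g$, the Wedderburn property just cited gives $D$ a right root $b$, necessarily in $Z(g)=C^{\sigma,\delta}(a)$ because $C^{\sigma,\delta}(a)$ is P-closed (Lemma~\ref{lemma algebraic conj class 2}); as $D$ right-divides $F$, the element $b$ is the desired right zero of $F$.
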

%

Another useful ingredient is the following characterization, which is \cite[Th. 5.10]{algebraic-conjugacy}.

\begin{lemma} [\textbf{\cite{algebraic-conjugacy}}] \label{lemma algebraic conj class 2}
For $ a \in \mathbb{F} $, the conjugacy class $ C^{\sigma,\delta}(a) \subseteq \mathbb{F} $ is algebraic if, and only if, $ \mathbb{F} $ has finite right dimension over $ K^{\sigma,\delta}_a $ (Definition \ref{def centralizer}). In such a case, $ C^{\sigma,\delta}(a) $ is P-closed and
$$ {\rm Rk}(C^{\sigma,\delta}(a)) = \dim^R_{K^{\sigma,\delta}_a}(\mathbb{F}) . $$
\end{lemma}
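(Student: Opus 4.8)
The plan is to identify the ideal $I(C^{\sigma,\delta}(a))$ with the annihilator of a single right $K^{\sigma,\delta}_a$-linear operator on $\mathbb{F}$, and then argue by dimension counting. Let $T \colon \mathbb{F} \longrightarrow \mathbb{F}$ be the pseudo-linear transformation attached to $a$, namely $T(c) = \sigma(c) a + \delta(c)$, so that $a^c = T(c) c^{-1}$ for every $c \in \mathbb{F}^*$. Two facts drive the proof. First, using $x^i(b) = N_i(b)$ (the $i$-th $(\sigma,\delta)$-norm, given by $N_0(b) = 1$ and $N_{i+1}(b) = \sigma(N_i(b)) b + \delta(N_i(b))$) together with a short induction on $i$, one obtains, for every $F = \sum_i F_i x^i \in \mathbb{F}[x;\sigma,\delta]$ and every $c \in \mathbb{F}^*$, the identity
$$ F(a^c)\, c \ = \ \Big( \sum_i F_i T^i \Big)(c) \ =: \ F(T)(c), $$
so that $F$ vanishes on all of $C^{\sigma,\delta}(a)$ if, and only if, the additive map $F(T)$ is identically $0$; that is, $I(C^{\sigma,\delta}(a)) = \{ F \in \mathbb{F}[x;\sigma,\delta] \mid F(T) = 0 \}$. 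Second, expanding $\sigma(ck)a + \delta(ck)$ and using $\sigma(k)a + \delta(k) = ak$ for $k \in K^{\sigma,\delta}_a$ shows $T(ck) = T(c) k$; hence $T$, and therefore every $F(T)$, is additive and right $K^{\sigma,\delta}_a$-linear.

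Suppose $m := \dim^R_{K^{\sigma,\delta}_a}(\mathbb{F}) < \infty$ and fix a right $K^{\sigma,\delta}_a$-basis $\beta_1, \ldots, \beta_m$ of $\mathbb{F}$. By Lam and Leroy's description of P-independence inside a single conjugacy class (\cite[Cor. 21]{lam}, \cite[Th. 4.5]{lam-leroy}), the elements $a^{\beta_1}, \ldots, a^{\beta_m}$ are pairwise distinct and $\Omega_0 = \{ a^{\beta_1}, \ldots, a^{\beta_m} \} \subseteq C^{\sigma,\delta}(a)$ is P-independent, so $\deg(F_{\Omega_0}) = m$. From $F_{\Omega_0}(a^{\beta_j}) = 0$ we get $F_{\Omega_0}(T)(\beta_j) = 0$ for all $j$; since $F_{\Omega_0}(T)$ is additive and right $K^{\sigma,\delta}_a$-linear and kills a basis, $F_{\Omega_0}(T) = 0$, i.e. $0 \neq F_{\Omega_0} \in I(C^{\sigma,\delta}(a))$, so $C^{\sigma,\delta}(a)$ is algebraic. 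Now $F_{C^{\sigma,\delta}(a)}$ generates $I(C^{\sigma,\delta}(a))$, hence right-divides $F_{\Omega_0}$; conversely $\Omega_0 \subseteq C^{\sigma,\delta}(a)$ gives $I(C^{\sigma,\delta}(a)) \subseteq I(\Omega_0)$ (Proposition \ref{prop properties alg geom}), so $F_{\Omega_0}$ right-divides $F_{C^{\sigma,\delta}(a)}$; being monic of the same degree, $F_{C^{\sigma,\delta}(a)} = F_{\Omega_0}$ and ${\rm Rk}(C^{\sigma,\delta}(a)) = m$. For P-closedness, write $F_{\Omega_0} = (x - b_N) \cdots (x - b_1)$ with each $b_i \in C^{\sigma,\delta}(a)$; by the product rule (Lemma \ref{lemma product rule}) and induction on $N$, every right zero of $F_{\Omega_0}$ is conjugate to some $b_i$, hence lies in $C^{\sigma,\delta}(a)$, so $\overline{C^{\sigma,\delta}(a)} = Z_{\mathcal{U}_0}(\{ F_{\Omega_0} \}) = C^{\sigma,\delta}(a)$.

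Conversely, suppose $C^{\sigma,\delta}(a)$ is algebraic and set $N = \deg(F_{C^{\sigma,\delta}(a)})$. For any P-independent $\Psi \subseteq C^{\sigma,\delta}(a)$, the polynomial $F_\Psi$ right-divides $F_{C^{\sigma,\delta}(a)}$, so $|\Psi| = \deg(F_\Psi) \leq N$. If $\dim^R_{K^{\sigma,\delta}_a}(\mathbb{F})$ were larger than $N$ (in particular, if it were infinite), one could choose right $K^{\sigma,\delta}_a$-linearly independent $\beta_1, \ldots, \beta_{N+1}$ and, by \cite[Th. 4.5]{lam-leroy} again, obtain a P-independent subset $\{ a^{\beta_1}, \ldots, a^{\beta_{N+1}} \}$ of $C^{\sigma,\delta}(a)$ of size $N+1$, a contradiction. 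Hence $\dim^R_{K^{\sigma,\delta}_a}(\mathbb{F}) \leq N < \infty$, and the previous paragraph then yields ${\rm Rk}(C^{\sigma,\delta}(a)) = \dim^R_{K^{\sigma,\delta}_a}(\mathbb{F})$.

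The step I expect to be the main obstacle is setting up the operator picture cleanly: proving the norm identity $F(a^c)\,c = F(T)(c)$ and the right $K^{\sigma,\delta}_a$-linearity of $F(T)$, which together convert ``$F$ vanishes on the whole conjugacy class'' into a statement about a single linear map on a right $K^{\sigma,\delta}_a$-vector space. Once that is in place, both directions are short, modulo the routine product-rule induction used for P-closedness and the invocation of Lam and Leroy's single-class P-independence criterion.
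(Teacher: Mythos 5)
Your proof is correct, but note that the paper does not actually prove Lemma \ref{lemma algebraic conj class 2}: it imports it verbatim from Lam and Leroy (\cite[Th. 5.10]{algebraic-conjugacy}), so there is no in-paper argument to compare against. What you have written is essentially a reconstruction of the Lam--Leroy argument, assembled from ingredients the paper itself records elsewhere: your operator $T$ is exactly $\mathcal{D}^{\sigma,\delta}_a$ of Definition \ref{def lin op pols}, your identity $F(a^c)c = F(T)(c)$ is Lemma \ref{lemma connecting evaluations}, the right $K^{\sigma,\delta}_a$-linearity computation is correct, and the translation ``$F$ vanishes on $C^{\sigma,\delta}(a)$ iff $F(T)=0$'' is the right pivot for both directions. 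Two steps deserve a flag. First, you invoke the Lam--Leroy criterion (P-independence of $\{a^{\beta_j}\}$ is equivalent to right $K^{\sigma,\delta}_a$-linear independence of the $\beta_j$) in both directions; this is legitimate here since the paper quotes the same criterion in the proof of Theorem \ref{th P-independence multiplicity seqs}, but be aware that in Lam--Leroy's own development that criterion and the dimension formula are proved together, so a referee could ask you to confirm there is no circularity in the source you cite. Second, the assertion that $F_{\Omega_0}$ factors as $(x-b_N)\cdots(x-b_1)$ with every $b_i \in C^{\sigma,\delta}(a)$ is stated without proof; it is covered by Lemma \ref{lemma min skew pol expression for union} with all $r_i = 1$ (the factors are conjugates of the chosen points), and your subsequent product-rule induction for P-closedness is then sound. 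Minor quibbles: you should say a word about why right $K^{\sigma,\delta}_a$-independent $\beta_i,\beta_j$ give distinct conjugates $a^{\beta_i}\neq a^{\beta_j}$ (equality holds exactly when $\beta_i \in \beta_j K^{\sigma,\delta}_a$), and the equality $F_{C^{\sigma,\delta}(a)} = F_{\Omega_0}$ from mutual right divisibility of monic polynomials is fine but worth one sentence. Overall the argument is complete and would serve as a self-contained proof of the cited lemma.
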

%
%
%

We will assume that $ \sigma $ is surjective due to the following result \cite[Eq. (19)]{ore}.

\begin{lemma} [\textbf{\cite{ore}}] \label{lemma ore left right isomorph}
If $ \sigma $ is surjective, then $ \mathbb{F}[x;\sigma,\delta] $ is a left Euclidean domain. In particular, if $ \sigma $ is surjective, then all definitions and results stated up to this point on the right hold in the same way on the left, and vice versa.
\end{lemma}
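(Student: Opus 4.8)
The statement has two parts: first, that $\sigma$ surjective implies $\mathbb{F}[x;\sigma,\delta]$ is a left Euclidean domain; second, the meta-mathematical observation that this symmetry transfers all left-right statements. For the first part, since $\mathbb{F}$ is a division ring, $\sigma$ surjective forces $\sigma$ to be a division-ring automorphism (it is already an injective endomorphism, being a ring morphism out of a division ring whose image is nonzero). So $\sigma^{-1}$ exists. The plan is to verify directly that left division is always possible: given $F, P \in \mathbb{F}[x;\sigma,\delta]$ with $P \neq 0$ of degree $d$ and leading coefficient $p_d$, and $F$ of degree $e \geq d$ with leading coefficient $f_e$, I want to find $c \in \mathbb{F}$ so that $F - P \cdot (c x^{e-d})$ has degree strictly less than $e$. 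Using the commutation rule (\ref{eq product over constants and variables}) one computes that the leading coefficient of $P \cdot (c x^{e-d})$ is $p_d \sigma^d(c)$, since pushing $c x^{e-d}$ past $x^d$ produces $\sigma^d(c)$ in the top-degree term plus lower-order junk. Thus I need $p_d \sigma^d(c) = f_e$, i.e. $c = \sigma^{-d}(p_d^{-1} f_e)$, which exists precisely because $\sigma$ (hence $\sigma^d$) is bijective. Iterating this reduction on the degree gives a left quotient and a left remainder of degree $< d$, establishing the left Euclidean property. That $\mathbb{F}[x;\sigma,\delta]$ is a domain is already known (degrees add), so "left Euclidean domain" follows.

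For the second part, the cleanest route is to exhibit an explicit ring isomorphism between $\mathbb{F}[x;\sigma,\delta]$ and the "opposite-side" skew polynomial ring, or equivalently to invoke the opposite ring. I would argue as follows. Let $\mathcal{R} = \mathbb{F}[x;\sigma,\delta]$ and let $\mathcal{R}^{\mathrm{op}}$ be its opposite ring. Right divisibility in $\mathcal{R}$ corresponds to left divisibility in $\mathcal{R}^{\mathrm{op}}$, right Euclidean division in $\mathcal{R}$ corresponds to left Euclidean division in $\mathcal{R}^{\mathrm{op}}$, and so on. It remains to identify $\mathcal{R}^{\mathrm{op}}$ with a skew polynomial ring of the same form over a division ring. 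Since $\sigma$ is an automorphism, one checks that $\mathcal{R}^{\mathrm{op}} \cong \mathbb{F}^{\mathrm{op}}[y; \sigma', \delta']$ for suitable $\sigma', \delta'$ built from $\sigma^{-1}$ and $\delta$: one sets $y = x$ and computes the commutation relation of $x$ with constants in the opposite multiplication, obtaining $a \cdot_{\mathrm{op}} x = x \cdot_{\mathrm{op}} \sigma^{-1}(a) - \sigma^{-1}(\delta(\sigma^{-1}(a)))$ or a similar identity; the point is only that it has the shape (\ref{eq product over constants and variables}) with an endomorphism and a twisted derivation, and $\mathbb{F}^{\mathrm{op}}$ is again a division ring. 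Hence every definition and theorem proved for general $\mathbb{F}[x;\sigma,\delta]$ (using only the right Euclidean structure) applies verbatim to $\mathcal{R}^{\mathrm{op}}$, and translating back through the opposite-ring dictionary yields the corresponding left-sided statement in $\mathcal{R}$.

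The main obstacle, I expect, is purely bookkeeping rather than conceptual: getting the commutation relation in the opposite ring into exactly the normalized form (\ref{eq product over constants and variables}), and being careful that the "twisted derivation" one obtains really satisfies the Leibniz rule $\delta'(ab) = \sigma'(a)\delta'(b) + \delta'(a) b$ with respect to the opposite multiplication. An alternative that sidesteps $\mathbb{F}^{\mathrm{op}}$ entirely is to not set up an abstract isomorphism at all, but simply to observe that the only structural facts used in Sections \ref{sec high-degree zeros}–\ref{sec multi from linear terms} are (i) $\mathcal{R}$ is a domain with an additive degree function and (ii) one-sided Euclidean division holds on the relevant side; part one of this lemma supplies (ii) on the left, so each earlier proof can be re-read with "right" replaced by "left" throughout. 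I would present the explicit left-division algorithm in full (it is short) and then state the left-right transfer as a consequence, citing \cite{ore} for the historical origin of the left Euclidean property, and leaving the routine verification of the opposite-ring identification to the reader or to a one-line remark.
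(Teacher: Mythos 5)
The paper does not prove this lemma at all: it is stated as a citation to Ore's 1933 paper (specifically Eq.~(19) there), so there is no in-paper argument to compare against. Your proposal is essentially the classical proof and is correct in substance. The first part is complete: since a ring endomorphism of a division ring fixing $1$ is automatically injective, surjectivity makes $\sigma$ an automorphism, and your computation that the leading coefficient of $P\cdot(cx^{e-d})$ is $p_d\sigma^d(c)$ correctly isolates where bijectivity of $\sigma^d$ is needed (in contrast with right division, where one solves $g\,\sigma^{e-d}(p_d)=f_e$ for $g$ and no surjectivity is required). For the second part, the opposite-ring identification is the right idea, but your displayed commutation identity is slightly off: working in $\mathcal{R}^{\mathrm{op}}$ one should impose $x\cdot_{\mathrm{op}}a=\sigma'(a)\cdot_{\mathrm{op}}x+\delta'(a)$, which in $\mathcal{R}$ reads $ax=\sigma(\sigma'(a))x+\delta(\sigma'(a))+\delta'(a)$ and forces $\sigma'=\sigma^{-1}$ and $\delta'=-\delta\circ\sigma^{-1}$ (no extra outer $\sigma^{-1}$ on the derivation term); one then checks $\delta'$ is a $\sigma'$-derivation for the opposite multiplication, which goes through. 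You flag this bookkeeping yourself, so I would not call it a gap, only an imprecision to clean up. Your alternative of simply re-reading each earlier proof with left and right interchanged, using only that the ring is a domain with additive degree and one-sided division, is exactly the spirit in which the paper invokes the cited result.
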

%

We will also use the notion of evaluation from \cite[Def. 3.1]{hilbert90} and \cite[Eq. (2.7)]{leroy-pol}.

\begin{definition} [\textbf{\cite{hilbert90, leroy-pol}}] \label{def lin op pols}
Given $ a \in \mathbb{F} $, we define the right $ K^{\sigma,\delta}_a $-linear map $ \mathcal{D}^{\sigma,\delta}_a : \mathbb{F} \longrightarrow \mathbb{F} $ such that $ \mathcal{D}^{\sigma,\delta}_a(\beta) = \sigma(\beta) a + \delta(\beta) $, for $ \beta \in \mathbb{F} $. We will denote $ \mathcal{D}_a = \mathcal{D}^{\sigma,\delta}_a $ if there is no confusion about the pair $ (\sigma,\delta) $. For a skew polynomial $ F = F_d x^d + \cdots + F_1 x + F_0 \in \mathbb{F}[x;\sigma,\delta] $, we define $ F^{\mathcal{D}_a} = F_d \mathcal{D}_a^d + \cdots + F_1 \mathcal{D}_a + F_0 {\rm Id} \in \mathbb{F}[\mathcal{D}_a] $.
\end{definition}

We will also use the following formula from \cite[Th. 2.8]{leroy-pol} (see also \cite[Lemma 24]{linearizedRS}).

\begin{lemma} [\textbf{\cite{leroy-pol}}] \label{lemma connecting evaluations}
For $ F \in \mathbb{F}[x;\sigma,\delta] $, $ a \in \mathbb{F} $ and $ \beta \in \mathbb{F}^* $, we have
$$ F ( a^\beta ) = F^{\mathcal{D}_a}(\beta) \beta^{-1}. $$
\end{lemma}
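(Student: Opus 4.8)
The plan is to reduce the identity to the case of monomials $x^i$ and then run an induction on $i$ using the product rule (Lemma~\ref{lemma product rule}).

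First I would record that, for every fixed $c\in\mathbb{F}$, the pointwise evaluation map $F\mapsto F(c)$ is left $\mathbb{F}$-linear: if $F=Q_1(x-c)+F(c)$ and $G=Q_2(x-c)+G(c)$ are the right Euclidean divisions by $x-c$, then $F+G=(Q_1+Q_2)(x-c)+(F(c)+G(c))$ and $\lambda F=\lambda Q_1(x-c)+\lambda F(c)$ for $\lambda\in\mathbb{F}$, so by uniqueness of the remainder $(F+G)(c)=F(c)+G(c)$ and $(\lambda F)(c)=\lambda F(c)$. Writing $F=\sum_i F_ix^i$ with $F_i\in\mathbb{F}$, it therefore suffices to establish the monomial identity
$$ x^i(a^\beta)=\mathcal{D}_a^i(\beta)\,\beta^{-1},\qquad i\in\mathbb{N}, $$
since then $F(a^\beta)=\sum_i F_i\,x^i(a^\beta)=\big(\sum_i F_i\,\mathcal{D}_a^i(\beta)\big)\beta^{-1}=F^{\mathcal{D}_a}(\beta)\,\beta^{-1}$ by the definition of $F^{\mathcal{D}_a}$.

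Next I would prove the monomial identity by induction on $i$. The case $i=0$ is $1=\beta\beta^{-1}$, and $i=1$ is exactly the definition $a^\beta=(\sigma(\beta)a+\delta(\beta))\beta^{-1}=\mathcal{D}_a(\beta)\beta^{-1}$. For the inductive step set $c=a^\beta$ and $\gamma=x^i(c)$, which by the induction hypothesis equals $\mathcal{D}_a^i(\beta)\beta^{-1}$, so that $\gamma\beta=\mathcal{D}_a^i(\beta)$. Apply the product rule to $x^{i+1}=x\cdot x^i$ evaluated at $c$. If $\gamma=0$, the product rule gives $x^{i+1}(c)=0$; on the other side $\mathcal{D}_a^i(\beta)=\gamma\beta=0$ forces $\mathcal{D}_a^{i+1}(\beta)=\mathcal{D}_a(0)=0$, so both sides vanish. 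If $\gamma\neq 0$, the product rule yields $x^{i+1}(c)=x(c^\gamma)\,\gamma=c^\gamma\gamma$ (using $x(d)=d$ for every $d\in\mathbb{F}$), and expanding first $c^\gamma=\sigma(\gamma)c\gamma^{-1}+\delta(\gamma)\gamma^{-1}$ and then $c=a^\beta=\sigma(\beta)a\beta^{-1}+\delta(\beta)\beta^{-1}$ gives
$$ c^\gamma\gamma=\sigma(\gamma)c+\delta(\gamma)=\big(\sigma(\gamma\beta)a+\sigma(\gamma)\delta(\beta)+\delta(\gamma)\beta\big)\beta^{-1}=\big(\sigma(\gamma\beta)a+\delta(\gamma\beta)\big)\beta^{-1}=\mathcal{D}_a(\gamma\beta)\,\beta^{-1}, $$
where the third equality uses the $\sigma$-derivation rule $\delta(\gamma\beta)=\sigma(\gamma)\delta(\beta)+\delta(\gamma)\beta$ together with $\sigma(\gamma)\sigma(\beta)=\sigma(\gamma\beta)$. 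Since $\gamma\beta=\mathcal{D}_a^i(\beta)$, this equals $\mathcal{D}_a(\mathcal{D}_a^i(\beta))\beta^{-1}=\mathcal{D}_a^{i+1}(\beta)\beta^{-1}$, completing the induction, and hence the lemma.

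I expect the only delicate point to be bookkeeping: invoking the correct branch of the product rule and making sure the degenerate case $x^i(a^\beta)=0$ is treated separately, so that one never inverts $\gamma=0$, while carrying the derivation identity cleanly through the expansion of $c^\gamma$. There is no substantial obstacle beyond this; an alternative not using the product rule would be a direct induction pushing $x$ past the remainder via $x\beta'=\sigma(\beta')x+\delta(\beta')$, but the product-rule argument is shorter and keeps the case analysis transparent.
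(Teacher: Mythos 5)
Your argument is correct. Note, however, that the paper does not prove this lemma at all: it is imported verbatim from Leroy's work (\cite{leroy-pol}, Th.~2.8, see also \cite[Lemma 24]{linearizedRS}), so there is no internal proof to compare against. Judged on its own, your reduction to monomials via left $\mathbb{F}$-linearity of the remainder map, followed by induction on $i$, is the standard route, and every step checks out: the base cases are right, the degenerate case $\gamma=0$ is handled correctly (both sides vanish since $\mathcal{D}_a(0)=0$), and the key computation $c^\gamma\gamma=\sigma(\gamma)c+\delta(\gamma)=\mathcal{D}_a(\gamma\beta)\beta^{-1}$ correctly uses the $\sigma$-derivation identity.

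One small streamlining worth knowing: the evaluation $x^i(c)$ satisfies the recursion $x^{i+1}(c)=\sigma\bigl(x^i(c)\bigr)c+\delta\bigl(x^i(c)\bigr)$ directly from the definition of right division (this is Lam--Leroy's $N_{i+1}(c)=\sigma(N_i(c))c+\delta(N_i(c))$), and this recursion holds uniformly in $x^i(c)$, with no need to distinguish whether it vanishes. Your inductive step is exactly this recursion, but re-derived through the product rule, which is what forces the case split on $\gamma=0$ versus $\gamma\neq 0$. Using the recursion directly would remove the case analysis and also avoid invoking the product rule, which is a strictly stronger tool than what is needed here. This is cosmetic; the proof as written is complete.
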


The following result is our main characterization of $ (\sigma,\delta) $-multiplicity sequences. It extends \cite[Lemma 4.1]{bolotnikov-zeros} (see also \cite[Th. 3.1]{bolotnikov-conf}) from quaternionic polynomials to general skew polynomials.

\begin{theorem} \label{th charact multiplicity seqs in alg conj}
Assume that $ \sigma $ is surjective and all conjugacy classes are algebraic. Let $ \mathbf{a} = (a_1, a_2, $ $ \ldots, a_r) \in \mathbb{F}^r $, where $ r \geq 2 $. The following are equivalent.
\begin{enumerate}
\item
$ \mathbf{a} $ is a $ (\sigma,\delta) $-multiplicity sequence, that is, $ a_1 $ is the only (right) zero of $ P_{\mathbf{a}} $.
\item
If $ b_1, \ldots, b_r \in \mathbb{F} $ satisfy $ P_{\mathbf{a}} = (x-b_r) \cdots (x-b_1) $, then $ b_i = a_i $, for $ i = 1,2, \ldots, r $.
\item
The pair $ (a_i, a_{i+1}) $ is a $ (\sigma,\delta) $-multiplicity sequence, for $ i = 1,2, \ldots, r-1 $.
\item
For $ i = 1,2, \ldots, r-1 $, there exists $ \beta_i \in \mathbb{F}^* $ such that $ a_{i+1} = a_i^{\beta_i} $ and
$$ \beta_i \notin \left\lbrace \left. \left( a_i^\beta - a_i \right) \beta \right| \beta \in \mathbb{F} \right\rbrace = \left\lbrace \left. \mathcal{D}_{a_i}^{\sigma,\delta}(\beta) - a_i \beta \right| \beta \in \mathbb{F} \right\rbrace. $$
\item
$ x - a_r $ is the only linear skew polynomial that divides $ P_{\mathbf{a}} $ on the left.
\end{enumerate}
\end{theorem}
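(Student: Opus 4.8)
The plan is to reduce all five conditions to the single ``local'' condition that each consecutive pair $(a_i,a_{i+1})$ is a $(\sigma,\delta)$-multiplicity sequence, i.e.\ to item~3. The main tools are the product rule (Lemma~\ref{lemma product rule}) iterated along the factors of $P_{\mathbf{a}}$ in a Horner-type recursion, the conjugation rule $(a^{\gamma})^{\gamma'}=a^{\gamma'\gamma}$, the cofactor bijection for degree-two skew polynomials (the linear left divisors and the linear right divisors of a monic degree-two skew polynomial are in bijection via their cofactors, so it has a unique one of either kind precisely when it has a unique factorization into linear factors), right-cancellation in the domain $\mathbb{F}[x;\sigma,\delta]$, Ore's cross-cut identity (Lemma~\ref{lemma ores cross cut formula}), and — because $\sigma$ is surjective — the left-Euclidean mirror (Lemma~\ref{lemma ore left right isomorph}) together with the fact that any right divisor of a skew polynomial that splits into linear factors itself splits (take a composition series of the associated quotient module, all of whose composition factors are one-dimensional over $\mathbb{F}$). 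Throughout, $\mathbf{a}_i:=(a_1,\dots,a_i)$ and I write $K_{a}^{\sigma,\delta}$, $\mathcal{D}_a$ as in Definitions~\ref{def centralizer} and~\ref{def lin op pols}.

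For $3\Rightarrow 1$ I would run the Horner recursion: given $b\in\mathbb{F}$ set $c_0=b$ and, while the previous term is defined and $\beta_k:=c_{k-1}-a_k\neq 0$, set $c_k:=c_{k-1}^{\beta_k}$; iterating Lemma~\ref{lemma product rule} gives $P_{\mathbf{a}}(b)=\beta_r\cdots\beta_1$ if all $\beta_k\neq 0$, while $P_{\mathbf{a}}(b)=0$ as soon as some $\beta_m=0$ with $m\le r$. If $b\neq a_1$ is a right zero, the first such index $m$ is $\ge 2$ and unwinding one step yields $c_{m-2}\neq a_{m-1}$ with $(c_{m-2})^{c_{m-2}-a_{m-1}}=a_m$, so $(x-a_m)(x-a_{m-1})$ has a right zero other than $a_{m-1}$; thus $3\Rightarrow 1$. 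Combining $3\Rightarrow 1$ with right-cancellation and induction on $r$ gives $3\Rightarrow 2$; $2\Rightarrow 3$ is immediate, since a right zero $c\neq a_j$ of $(x-a_{j+1})(x-a_j)$ produces the alternative factorization $P_{\mathbf{a}}=(x-a_r)\cdots(x-a_{j+2})(x-a_j')(x-c)(x-a_{j-1})\cdots(x-a_1)$; and $2\Rightarrow 5$ follows from the splitting fact (a linear left divisor $x-d$ of $P_{\mathbf{a}}$ gives $P_{\mathbf{a}}=(x-d)G$ with $G$ split, hence a factorization with leftmost factor $x-d$, forcing $d=a_r$). For $3\Leftrightarrow 4$ I would work pair by pair: the right zeros of $(x-a_{i+1})(x-a_i)$ other than $a_i$ are exactly the $b$ with $b^{b-a_i}=a_{i+1}$, and writing $b=a_i^{\beta}$ one computes $b^{b-a_i}=a_i^{\mathcal{D}_{a_i}(\beta)-a_i\beta}$; hence such a $b$ exists iff $a_{i+1}=a_i^{\beta_i}$ for some $\beta_i$ in the right $K_{a_i}^{\sigma,\delta}$-subspace $W_i=\{\mathcal{D}_{a_i}(\beta)-a_i\beta\mid\beta\in\mathbb{F}\}$. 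Since the admissible conjugating elements form one coset of $(K_{a_i}^{\sigma,\delta})^{\ast}$ and $W_i$ is a right $K_{a_i}^{\sigma,\delta}$-subspace, ``every admissible $\beta_i$ lies in $W_i$'' is equivalent to ``some admissible $\beta_i$ avoids $W_i$''; and the case $a_{i+1}\notin C^{\sigma,\delta}(a_i)$ is handled by Lemma~\ref{lemma algebraic conj class 1} (as $C^{\sigma,\delta}(a_{i+1})$ is algebraic and $x-a_{i+1}$ divides $(x-a_{i+1})(x-a_i)$ on the left), which forces a right zero $\neq a_i$, so both the pair condition and item~4's clause fail.

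It remains to close the cycle with $1\Rightarrow 3$ and $5\Rightarrow 3$. Here $5\Rightarrow 3$ is the left--right mirror of $1\Rightarrow 3$: by Lemma~\ref{lemma ore left right isomorph} the ``left'' form of $1\Rightarrow 3$ holds, and applying it to the reversed sequence $(a_r,a_{r-1},\dots,a_1)$ — whose associated product is again $P_{\mathbf{a}}$ — turns item~5 into ``each $(x-a_{k+1})(x-a_k)$ has $x-a_{k+1}$ as its unique linear left divisor,'' which by the cofactor bijection is item~3. So the whole theorem comes down to $1\Rightarrow 3$. For this I would use Proposition~\ref{prop monotonicity multiplicity seqs} and induction on $r$: if some pair $(a_j,a_{j+1})$ with $j\le r-2$ failed, then the prefix $\mathbf{a}_{j+1}$ would fail item~3, hence (induction) fail item~1, contradicting Proposition~\ref{prop monotonicity multiplicity seqs} applied to $\mathbf{a}$; so everything reduces to the single statement that if $\mathbf{a}$ is a multiplicity sequence then its \emph{last} pair $(a_{r-1},a_r)$ is one. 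To prove this I would argue by contradiction: a right zero $c\neq a_{r-1}$ of $(x-a_r)(x-a_{r-1})$ gives a second length-$r$ factorization of $P_{\mathbf{a}}$ whose degree-$(r-1)$ right divisor $R=(x-c)(x-a_{r-2})\cdots(x-a_1)$ differs (by right-cancellation) from $R'=(x-a_{r-1})(x-a_{r-2})\cdots(x-a_1)$; both right-divide $P_{\mathbf{a}}$, hence both have $a_1$ as their only right zero, and Lemma~\ref{lemma ores cross cut formula} shows their greatest common right divisor is exactly the common suffix $(x-a_{r-2})\cdots(x-a_1)$, so their least left common multiple has degree $r$ and equals $P_{\mathbf{a}}$; thus $P_{\mathbf{a}}=L(x-a_{r-2})\cdots(x-a_1)$ with $L$ the least left common multiple of $x-c$ and $x-a_{r-1}$, a degree-two skew polynomial with (at least) the two right zeros $a_{r-1}$ and $c$. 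Pushing the Horner recursion through this degree-two head — using Lemma~\ref{lemma algebraic conj class 1} to guarantee that the extra right zero of $L$ is actually produced from a suitable $b\neq a_1$ — yields a right zero of $P_{\mathbf{a}}$ distinct from $a_1$, the desired contradiction. This last step, namely converting ``the head of $P_{\mathbf{a}}$ has two right zeros'' into ``$P_{\mathbf{a}}$ itself has an extra right zero'' in the presence of the suffix $(x-a_{r-2})\cdots(x-a_1)$, is where I expect the real work to lie and where the standing hypotheses that $\sigma$ is surjective and all conjugacy classes are algebraic are genuinely needed.
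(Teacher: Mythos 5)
Your architecture coincides with the paper's almost implication for implication: $3\Rightarrow 1$ by iterating the product rule along the factors, $2\Leftrightarrow 3$ by left/right cancellation, $3\Leftrightarrow 4$ by the pairwise computation $b^{\,b-a_i}=a_i^{\mathcal{D}_{a_i}(\beta)-a_i\beta}$ combined with the observation that the admissible conjugating elements form a coset of $(K^{\sigma,\delta}_{a_i})^*$ while $\{\mathcal{D}_{a_i}(\beta)-a_i\beta \mid \beta\in\mathbb{F}\}$ is a right $K^{\sigma,\delta}_{a_i}$-subspace, and item 5 via the left--right transfer of Lemma \ref{lemma ore left right isomorph} (your routing $2\Rightarrow 5\Rightarrow 3$ using the ``right divisors of split polynomials split'' fact is a mild detour from the paper, which instead exploits that item 2 is manifestly left--right symmetric, but both are sound). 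Your reduction of $1\Rightarrow 3$ to the last pair, ending with $P_{\mathbf{a}}=L\,(x-a_{r-2})\cdots(x-a_1)$ where $L=(x-a_r)(x-a_{r-1})$ is a degree-two polynomial with two right zeros $a_{r-1}\neq c$, is also exactly the paper's setup.

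The genuine gap is the step you yourself flag as ``where the real work lies'': turning the second right zero of the head $L$ into a right zero of $P_{\mathbf{a}}$ other than $a_1$. ``Pushing the Horner recursion through the degree-two head'' requires exhibiting some $b$ with $P(b)\neq 0$ and $b^{P(b)}\in Z(L)$, where $P=(x-a_{r-2})\cdots(x-a_1)$, and Lemma \ref{lemma algebraic conj class 1} by itself does not produce such a $b$. The paper's missing ingredient is a dimension count over $K^{\sigma,\delta}_{a_1}$: writing $a_{r-1}=a_1^{\alpha}$ and $c=a_1^{\beta}$ (this is where Lemma \ref{lemma algebraic conj class 1} actually enters), Lemma \ref{lemma connecting evaluations} together with the fact that $(a_1,\dots,a_{r-2})$ is a multiplicity sequence gives $\mathrm{Ker}\bigl(P^{\mathcal{D}_{a_1}}\bigr)=K^{\sigma,\delta}_{a_1}$, so $\mathrm{Im}\bigl(P^{\mathcal{D}_{a_1}}\bigr)$ has right codimension $1$ in $\mathbb{F}$ over $K^{\sigma,\delta}_{a_1}$; since $\{a_{r-1},c\}$ is P-independent, $\langle\alpha,\beta\rangle^R_{K^{\sigma,\delta}_{a_1}}$ has dimension $2$ by Theorem \ref{th P-independence multiplicity seqs}; and any nonzero $\gamma=P^{\mathcal{D}_{a_1}}(\eta)$ in the intersection of these two subspaces satisfies, by the product rule, $P_{\mathbf{a}}(a_1^{\eta})=F_{\Psi}(a_1^{\gamma\eta^{-1}\cdot\eta})P(a_1^{\eta})=0$ with $a_1^{\eta}\neq a_1$. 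The contradiction $(m-1)+2>m=\dim^R_{K^{\sigma,\delta}_{a_1}}(\mathbb{F})$ is what finishes the proof, and it is precisely here that the algebraicity of the conjugacy class (finiteness of $m$) is used. This pigeonhole argument, not the Horner recursion, is the idea your proposal lacks; without it the implication $1\Rightarrow 3$, and hence the whole equivalence cycle, is not established.
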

\begin{proof}
We prove the following implications separately:

$ 3. \Longrightarrow 1. $: We proceed by induction in $ r \geq 2 $, being the case $ r = 2 $ trivial. Assume that item 3 holds for some $ r \geq 3 $. By induction hypothesis, $ (a_1, a_2, \ldots, a_{r-1}) \in \mathbb{F}^{r-1} $ is a $ (\sigma,\delta) $-multiplicity sequence. Define $ P = (x-a_{r-2}) \cdots (x-a_2) (x - a_1) $ and $ Q = (x-a_r) (x-a_{r-1}) $. Hence $ a_1 $ is the only zero of $ P $ by Proposition \ref{prop monotonicity multiplicity seqs}. Assume that $ \mathbf{a} = (a_1, a_2, \ldots, a_r) \in \mathbb{F}^r $ is not a $ (\sigma,\delta) $-multiplicity sequence. In other words, there exists $ b \in \mathbb{F} $ such that $ b \neq a_1 $ and $ P_{\mathbf{a}}(b) = 0 $. By the product rule (Lemma \ref{lemma product rule}), it holds that
$$ 0 = P_{\mathbf{a}}(b) = Q \left( b^{P(b)} \right) P(b), $$
where $ P(b) \neq 0 $ since $ b \neq a_1 $. Therefore we deduce that $ Q \left( b^{P(b)} \right) = 0 $. Since $ (a_{r-1},a_r) $ is a $ (\sigma,\delta) $-multiplicity sequence, we have that $ b^{P(b)} = a_{r-1} $. However, in that case, we have that $ R(b) = 0 $ by the product rule (Lemma \ref{lemma product rule}), for
$$ R = (x-a_{r-1}) P = (x-a_{r-1}) \cdots (x-a_2) (x - a_1), $$
which is absurd since $ b \neq a_1 $ but $ (a_1, a_2, \ldots, a_{r-1}) \in \mathbb{F}^{r-1} $ is a $ (\sigma,\delta) $-multiplicity sequence. Thus it must hold that $ \mathbf{a} \in \mathbb{F}^r $ is a $ (\sigma,\delta) $-multiplicity sequence, and we are done.

$ 2. \Longleftrightarrow 3. $: It is trivial that item 2 implies item 3. Assume now that item 3 holds. Since item 3 implies item 1, we deduce that $ (a_i, a_{i+1}, \ldots, a_r) \in \mathbb{F}^{r-i+1} $ is a $ (\sigma,\delta) $-multiplicity sequence, for $ i = 1,2, \ldots, r $. Assume that $ P_{\mathbf{a}} = (x-b_r) \cdots (x-b_2)(x-b_1) $ for $ b_1, b_2, \ldots, b_r \in \mathbb{F} $. Let $ j \in \mathbb{N} $ be the maximum number such that $ 1 \leq j \leq r $ and $ b_i = a_i $, for $ i = 1,2, \ldots, j $. If item 2 does not hold, then $ j \leq r-1 $. In that case, 
$$ (x - b_r) \cdots (x - b_{j+2}) (x - b_{j+1}) = (x - a_r) \cdots (x - a_{j+2}) (x - a_{j+1}). $$
However, since $ b_{j+1} \neq a_{j+1} $, then $ (a_{j+1}, a_{j+2}, \ldots, a_r) $ is not a $ (\sigma,\delta) $-multiplicity sequence. This is a contradiction, thus item 2 holds.

$ 1. \Longrightarrow 3. $: We proceed by induction on $ r \geq 2 $, being the case $ r = 2 $ trivial. Assume that item 1 holds for some $ r \geq 3 $. By induction hypothesis, $ (a_i, a_{i+1}) $ is a $ (\sigma,\delta) $-multiplicity sequence, for $ i = 1,2, \ldots, r-2 $. Assume however that $ (a_{r-1},a_r) $ is not a $ (\sigma,\delta) $-multiplicity sequence. By definition of minimal skew polynomial, we deduce that $ F_\Psi = (x-a_r) (x-a_{r-1}) $, where $ \Psi = \{ a_{r-1}, b \} \subseteq \mathbb{F} $ is P-independent and $ b \neq a_{r-1} $. Define now $ P = (x-a_{r-2}) \cdots (x-a_2) (x - a_1) $. Hence, we have that $ P_{\mathbf{a}} = F_\Psi P $.

By hypothesis and Proposition \ref{prop monotonicity multiplicity seqs}, $ (a_1, a_2, \ldots, a_{r-2}) $ is a $ (\sigma,\delta) $-multiplicity sequence, thus $ C^{\sigma,\delta}(a_1) \cap Z(P) = \{ a_1 \} $. Hence by Lemma \ref{lemma connecting evaluations}, $ {\rm Ker} \left( P^{\mathcal{D}_{a_1}} \right) = K^{\sigma,\delta}_{a_1} $. We also have that $ m = \dim^R_{K^{\sigma,\delta}_{a_1}} (\mathbb{F}) < \infty $ by Lemma \ref{lemma algebraic conj class 2}, since $ C^{\sigma,\delta}(a_1) $ is algebraic. Therefore
\begin{equation}
\dim^R_{K^{\sigma,\delta}_{a_1}} \left( {\rm Im} \left( P^{\mathcal{D}_{a_1}} \right) \right) = m - 1.
\label{eq proof equivalences multiplicity 2}
\end{equation}
Since $ C^{\sigma,\delta}(a_{r-1}) $ and $ C^{\sigma,\delta}(b) $ are algebraic and $ \mathbf{a} $ is a $ (\sigma,\delta) $-multiplicity sequence, we deduce from Proposition \ref{prop monotonicity multiplicity seqs} and Lemma \ref{lemma algebraic conj class 1} that there exist $ \alpha, \beta \in \mathbb{F}^* $ such that $ a_{r-1} = a_1^\alpha $ and $ b = a_1^\beta $. Next we prove that
\begin{equation}
 {\rm Im} \left( P^{\mathcal{D}_{a_1}} \right) \cap \langle \alpha, \beta \rangle^R_{K^{\sigma,\delta}_{a_1}} = \{ 0 \} ,
\label{eq proof equivalences multiplicity 1}
\end{equation}
where $ \langle \alpha, \beta \rangle^R_{K^{\sigma,\delta}_{a_1}} $ denotes the right $ K^{\sigma,\delta}_{a_1} $-linear span of $ \alpha $ and $ \beta $. Let $ \gamma \in {\rm Im} \left( P^{\mathcal{D}_{a_1}} \right) \cap \langle \alpha , \beta \rangle^R_{K^{\sigma,\delta}_{a_1}} $. Assume that $ \gamma \neq 0 $. Since $ \gamma \in {\rm Im} \left( P^{\mathcal{D}_{a_1}} \right) $, there exists $ \eta \in \mathbb{F}^* $ such that $ \gamma = P^{\mathcal{D}_{a_1}}(\eta) $. Observe that, by Lemma \ref{lemma connecting evaluations}, we have that $ P \left( a_1^\eta \right) = P^{\mathcal{D}_{a_1}}(\eta) \eta^{-1} = \gamma \eta^{-1} \neq 0 $. Now, since $ \gamma \in \langle \alpha, \beta \rangle^R_{K^{\sigma,\delta}_{a_1}} $, we deduce by Theorem \ref{th P-independence multiplicity seqs} that $ a_1^\gamma $ is P-dependent on $ a_{r-1} $ and $ b $. In other words, we have that
$$ 0 = F_\Psi \left( a_1^\gamma \right) = F_\Psi \left( a_1^{P \left( a_1^\eta \right) \eta } \right). $$
Hence, by the product rule (Lemma \ref{lemma product rule}), it holds that
$$ P_{\mathbf{a}} \left( a_1^\eta \right) = F_\Psi \left( a_1^{P \left( a_1^\eta \right) \eta } \right) P \left( a_1^\eta \right) = 0. $$
Since $ \mathbf{a} $ is a $ (\sigma,\delta) $-multiplicity sequence, we deduce that $ a_1^\eta = a_1 $, that is, $ \eta \in K^{\sigma,\delta}_{a_1} $. However, since $ {\rm Ker} \left( P^{\mathcal{D}_{a_1}} \right) = K^{\sigma,\delta}_{a_1} $, we would have that $ \gamma = P^{\mathcal{D}_{a_1}}(\eta) = 0 $, a contradiction. Thus (\ref{eq proof equivalences multiplicity 1}) must hold. Finally, since $ \Psi = \{ a_{r-1}, b \} $ is P-independent, we conclude by Theorem \ref{th P-independence multiplicity seqs} that $ \dim^R_{K^{\sigma,\delta}_{a_1}} \left( \langle \alpha, \beta \rangle^R_{K^{\sigma,\delta}_{a_1}} \right) = 2 $. Combined with (\ref{eq proof equivalences multiplicity 2}) and (\ref{eq proof equivalences multiplicity 1}), we have 
$$ \dim^R_{K^{\sigma,\delta}_{a_1}} \left( {\rm Im} \left( P^{\mathcal{D}_{a_1}} \right) + \langle \alpha, \beta \rangle^R_{K^{\sigma,\delta}_{a_1}} \right) = m+1. $$
This is absurd since $ \dim^R_{K^{\sigma,\delta}_{a_1}}(\mathbb{F}) = m < \infty $ by assumption. Therefore, $ (a_{r-1},a_r) $ must be a $ (\sigma,\delta) $-multiplicity sequence, and we are done.

$ 2. \Longleftrightarrow 5. $: It follows from Lemma \ref{lemma ore left right isomorph} and the equivalence between items 1 and 2, since item 2 can be read from left to right or from right to left.

$ 3. \Longleftrightarrow 4. $: First, if item 3 holds, then $ a_1, a_2, \ldots, a_r \in C^{\sigma,\delta}(a_1) $ by Lemma \ref{lemma algebraic conj class 1}, since all conjugacy classes are algebraic and $ (a_1, a_2, \ldots, a_i ) \in \mathbb{F}^i $ is a $ (\sigma,\delta) $-multiplicity sequence, for $ i = 1,2, \ldots, r $, by the fact that item 3 implies item 1. Thus, in both items 3 and 4 it holds that $ a_1, a_2, \ldots, a_r \in C^{\sigma,\delta}(a_1) $. 

Now fix $ i = 1,2, \ldots, r-1 $ and let $ \beta_i, \beta \in \mathbb{F}^* $ be such that $ a_{i+1} = a_i^{\beta_i} $ and $ a_i^\beta \neq a_i $, which is equivalent to $ \beta \in \mathbb{F} \setminus K^{\sigma,\delta}_{a_i} $. By the product rule (Lemma \ref{lemma product rule}),
$$ P ( a_i^\beta ) = \left( a_i^{ \left( a_i^\beta - a_i \right) \beta } - a_i^{\beta_i} \right) \left( a_i^\beta - a_i \right) $$
for $ P = (x-a_{i+1})(x-a_i) $. Thus it holds that $ P ( a_i^\beta ) = 0 $ for some $ \beta \in \mathbb{F}^* $ such that $ a_i^\beta \neq a_i $ if, and only if, $ a_i^{ \left( a_i^\beta - a_i \right) \beta } = a_i^{\beta_i} $, which is equivalent to
$$ \beta_i \in \left\langle \left\lbrace \left. \left( a_i^\beta - a_i \right) \beta \right| \beta \in \mathbb{F} \setminus K^{\sigma,\delta}_{a_i} \right\rbrace \right\rangle^R_{K^{\sigma,\delta}_{a_i}}. $$
We are done by the definition of $ \mathcal{D}_{a_i}^{\sigma,\delta} $ (Definition \ref{def lin op pols}), and by noticing that
$$ \left\langle \left\lbrace \left. \left( a_i^\beta - a_i \right) \beta \right| \beta \in \mathbb{F} \setminus K^{\sigma,\delta}_{a_i} \right\rbrace \right\rangle^R_{K^{\sigma,\delta}_{a_i}} = \left\lbrace \left. \left( a_i^\beta - a_i \right) \beta \right| \beta \in \mathbb{F} \right\rbrace . $$
\end{proof}

\begin{remark} \label{remark all conj classes are algebraic}
The assumption that all conjugacy classes are algebraic may look inconvenient. However, it holds in many classical particular cases (see Section \ref{sec particular cases}).
%
%
\end{remark}

A first consequence of Theorem \ref{th charact multiplicity seqs in alg conj} is that $ (\sigma,\delta) $-multiplicity sequences exist starting at any point of an algebraic conjugacy class. We will need the following lemma.

\begin{lemma} \label{lemma image of varphi is hyperplane}
If $ a \in \mathbb{F} $ and $ m = \dim^R_{K^{\sigma,\delta}_a}(\mathbb{F}) < \infty $, then $ \dim^R_{K^{\sigma,\delta}_a}(\mathcal{V}^{\sigma,\delta}_a) = m-1 $, where $ \mathcal{V}^{\sigma,\delta}_a = \{ \mathcal{D}^{\sigma,\delta}_a(\beta) - a \beta \mid \beta \in \mathbb{F} \} \subseteq \mathbb{F} $.
\end{lemma}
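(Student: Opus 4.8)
The plan is to identify $\mathcal{V}^{\sigma,\delta}_a$ as the image of a right $K^{\sigma,\delta}_a$-linear endomorphism of $\mathbb{F}$ whose kernel is precisely $K^{\sigma,\delta}_a$, and then to invoke the rank--nullity theorem over the division ring $K^{\sigma,\delta}_a$.

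First I would introduce the map $\varphi : \mathbb{F} \longrightarrow \mathbb{F}$ defined by $\varphi(\beta) = \mathcal{D}^{\sigma,\delta}_a(\beta) - a\beta$, for $\beta \in \mathbb{F}$, so that by definition $\mathcal{V}^{\sigma,\delta}_a = {\rm Im}(\varphi)$. The map $\mathcal{D}^{\sigma,\delta}_a$ is right $K^{\sigma,\delta}_a$-linear by Definition \ref{def lin op pols}, and left multiplication by $a$ is right $K^{\sigma,\delta}_a$-linear as well, since it is additive and $a(\beta \kappa) = (a\beta)\kappa$ for all $\kappa \in K^{\sigma,\delta}_a$ by associativity. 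Hence $\varphi$, being the difference of two right $K^{\sigma,\delta}_a$-linear maps, is right $K^{\sigma,\delta}_a$-linear; in particular $\mathcal{V}^{\sigma,\delta}_a = {\rm Im}(\varphi)$ is a right $K^{\sigma,\delta}_a$-subspace of $\mathbb{F}$.

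Next I would compute ${\rm Ker}(\varphi)$. Unravelling Definition \ref{def lin op pols}, we have $\varphi(\beta) = \sigma(\beta) a + \delta(\beta) - a\beta$, so $\varphi(\beta) = 0$ if, and only if, $\sigma(\beta) a + \delta(\beta) = a\beta$, which is exactly the defining condition of $K^{\sigma,\delta}_a$ in Definition \ref{def centralizer}. Thus ${\rm Ker}(\varphi) = K^{\sigma,\delta}_a$, which has right dimension $1$ over itself.

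Finally I would apply the rank--nullity theorem to $\varphi$, viewed as a right-linear map between right $K^{\sigma,\delta}_a$-vector spaces (legitimate since $K^{\sigma,\delta}_a$ is a division ring and $\dim^R_{K^{\sigma,\delta}_a}(\mathbb{F}) = m < \infty$): this yields $m = \dim^R_{K^{\sigma,\delta}_a}({\rm Ker}(\varphi)) + \dim^R_{K^{\sigma,\delta}_a}({\rm Im}(\varphi)) = 1 + \dim^R_{K^{\sigma,\delta}_a}(\mathcal{V}^{\sigma,\delta}_a)$, whence $\dim^R_{K^{\sigma,\delta}_a}(\mathcal{V}^{\sigma,\delta}_a) = m - 1$. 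There is no genuine obstacle in this argument; the only point deserving a moment's attention is checking that left multiplication by $a$ is compatible with the right $K^{\sigma,\delta}_a$-action, so that $\varphi$ really is right $K^{\sigma,\delta}_a$-linear and rank--nullity applies.
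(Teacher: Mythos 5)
Your proof is correct and takes essentially the same route as the paper: the paper likewise introduces the right $K^{\sigma,\delta}_a$-linear map $\varphi^{\sigma,\delta}_a(\beta) = \mathcal{D}^{\sigma,\delta}_a(\beta) - a\beta$, notes that $\mathrm{Ker}(\varphi^{\sigma,\delta}_a) = K^{\sigma,\delta}_a$ and $\mathrm{Im}(\varphi^{\sigma,\delta}_a) = \mathcal{V}^{\sigma,\delta}_a$, and concludes by rank--nullity. Your write-up simply makes explicit the routine verifications (right $K^{\sigma,\delta}_a$-linearity of left multiplication by $a$ and the kernel computation) that the paper leaves implicit.
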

\begin{proof}
The map $ \varphi^{\sigma,\delta}_a : \mathbb{F} \longrightarrow \mathbb{F} $ given by $ \varphi^{\sigma,\delta}_a(\beta) = \mathcal{D}^{\sigma,\delta}_a(\beta) - a \beta $, for all $ \beta \in \mathbb{F} $, is right $ K^{\sigma,\delta}_a $-linear and satisfies that $ {\rm Ker} (\varphi^{\sigma,\delta}_a) = K^{\sigma,\delta}_a $ and $ {\rm Im}(\varphi^{\sigma,\delta}_a) = \mathcal{V}^{\sigma,\delta}_a $.
\end{proof}

We now prove the existence of $ (\sigma,\delta) $-multiplicity sequences over algebraic conjugacy classes. This result follows from Lemma \ref{lemma algebraic conj class 2}, item 4 in Theorem \ref{th charact multiplicity seqs in alg conj} and Lemma \ref{lemma image of varphi is hyperplane}.

\begin{corollary} \label{cor existence multiplicity seqs}
For all $ a \in \mathbb{F} $ such that $ \dim^R_{K^{\sigma,\delta}_a}(\mathbb{F}) < \infty $, and for all $ r \in \mathbb{Z}_+ $, there exists a $ (\sigma,\delta) $-multiplicity sequence $ \mathbf{a} = (a_1, a_2, \ldots, a_r) \in \mathbb{F}^r $ such that $ a_1 = a $.
\end{corollary}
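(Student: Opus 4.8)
The plan is to build the sequence greedily, one term at a time, using item 4 of Theorem~\ref{th charact multiplicity seqs in alg conj} as the recursion rule and Lemma~\ref{lemma image of varphi is hyperplane} to certify that a legal next term always exists. The case $ r = 1 $ is immediate, since $ x - a $ has $ a $ as its only right zero, so I would assume $ r \geq 2 $ and argue by induction, maintaining the invariant that after $ i $ steps we have a tuple $ (a_1, a_2, \ldots, a_i) $ with $ a_1 = a $, with $ (a_j, a_{j+1}) $ a $ (\sigma,\delta) $-multiplicity sequence for $ 1 \leq j < i $, and with $ a_j \in C^{\sigma,\delta}(a) $ for all $ j \leq i $.

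For the inductive step, suppose $ a_1 = a, a_2, \ldots, a_i $ have been produced with $ 1 \leq i < r $. Since $ a_i \in C^{\sigma,\delta}(a) $ and $ \dim^R_{K^{\sigma,\delta}_a}(\mathbb{F}) < \infty $, Lemma~\ref{lemma algebraic conj class 2} tells us $ C^{\sigma,\delta}(a_i) = C^{\sigma,\delta}(a) $ is algebraic, and applying that lemma again at $ a_i $ gives $ m := \dim^R_{K^{\sigma,\delta}_{a_i}}(\mathbb{F}) < \infty $. By Lemma~\ref{lemma image of varphi is hyperplane}, the set $ \mathcal{V}^{\sigma,\delta}_{a_i} = \{ \mathcal{D}^{\sigma,\delta}_{a_i}(\beta) - a_i \beta \mid \beta \in \mathbb{F} \} $ is a right $ K^{\sigma,\delta}_{a_i} $-subspace of $ \mathbb{F} $ of dimension $ m - 1 $, hence a proper subset of $ \mathbb{F} $. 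I would then pick any $ \beta_i \in \mathbb{F} \setminus \mathcal{V}^{\sigma,\delta}_{a_i} $ and observe that $ \beta_i \neq 0 $, because $ 0 = \mathcal{D}^{\sigma,\delta}_{a_i}(0) - a_i \cdot 0 \in \mathcal{V}^{\sigma,\delta}_{a_i} $. Setting $ a_{i+1} := a_i^{\beta_i} $, the equivalence of items 3 and 4 in Theorem~\ref{th charact multiplicity seqs in alg conj}, applied to the pair $ (a_i, a_{i+1}) $ (the case $ r = 2 $), shows that $ (a_i, a_{i+1}) $ is a $ (\sigma,\delta) $-multiplicity sequence, while $ a_{i+1} \in C^{\sigma,\delta}(a_i) = C^{\sigma,\delta}(a) $ preserves the invariant.

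After $ r - 1 $ steps this produces $ \mathbf{a} = (a_1, a_2, \ldots, a_r) $ with $ a_1 = a $ and every consecutive pair a $ (\sigma,\delta) $-multiplicity sequence, i.e.\ item 3 of Theorem~\ref{th charact multiplicity seqs in alg conj} holds; by the equivalence with item 1 of that theorem, $ \mathbf{a} $ is itself a $ (\sigma,\delta) $-multiplicity sequence, which is exactly what is claimed. I do not expect a genuine obstacle here; the only points demanding care are that the finiteness hypothesis on the centralizer travels along the conjugacy class — so that Lemma~\ref{lemma image of varphi is hyperplane} may be reapplied at each $ a_i $ — and the (easy) remark that any element chosen outside the codimension-one subspace $ \mathcal{V}^{\sigma,\delta}_{a_i} $ is automatically invertible.
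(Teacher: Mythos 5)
Your argument is correct and is precisely the route the paper takes: the paper proves this corollary by citing exactly the three ingredients you use (Lemma~\ref{lemma algebraic conj class 2} to carry the finiteness hypothesis along the conjugacy class, Lemma~\ref{lemma image of varphi is hyperplane} to see that $\mathcal{V}^{\sigma,\delta}_{a_i}$ is a proper subspace, and item 4 of Theorem~\ref{th charact multiplicity seqs in alg conj} to extend the sequence one step at a time). Your write-up simply makes the greedy induction and the nonvanishing of $\beta_i$ explicit, which the paper leaves to the reader.
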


On the other hand, we do not have in general uniqueness of $ (\sigma,\delta) $-multiplicity sequences starting at a point in an algebraic conjugacy class. However, there is a unique way to consider multiplicities as in Definition \ref{def multiplicity II} over a full conjugacy class, as the following theorem shows. We will assume again that all conjugacy classes are algebraic. We also need an auxiliary lemma, obtained from \cite[Lemma 5.2]{algebraic-conjugacy} or \cite[Prop. 18]{multivariateskew}.

\begin{lemma} [\textbf{\cite{algebraic-conjugacy, multivariateskew}}] \label{lemma ideal of conjugacy twosided}
Given $ a \in \mathbb{F} $, the left ideal $ I(C^{\sigma,\delta}(a)) $ is two-sided. 
\end{lemma}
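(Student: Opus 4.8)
The plan is to reduce everything to the product rule (Lemma~\ref{lemma product rule}). By item~1 of Proposition~\ref{prop properties alg geom} we already know that $I(C^{\sigma,\delta}(a))$ is a left ideal of $\mathbb{F}[x;\sigma,\delta]$, and it is obviously an additive subgroup, so the only thing left to establish is that it is stable under right multiplication; this will show it is two-sided. Concretely, I would fix $F \in I(C^{\sigma,\delta}(a))$ and an arbitrary $G \in \mathbb{F}[x;\sigma,\delta]$ and prove that $FG$ vanishes at every $b \in C^{\sigma,\delta}(a)$, i.e.\ that $(FG)(b) = 0$.

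First I would dispose of the case $G(b) = 0$, where $(FG)(b) = 0$ is immediate from the product rule. In the remaining case, setting $\beta = G(b) \in \mathbb{F}^*$, the product rule yields $(FG)(b) = F(b^\beta)\, G(b)$, and the crux of the argument is to observe that $b^\beta$ stays inside $C^{\sigma,\delta}(a)$: by definition $b^\beta \in C^{\sigma,\delta}(b)$, and since $b$ is conjugate to $a$ and conjugacy is an equivalence relation, $C^{\sigma,\delta}(b) = C^{\sigma,\delta}(a)$. Hence $F(b^\beta) = 0$ because $F \in I(C^{\sigma,\delta}(a))$, and therefore $(FG)(b) = F(b^\beta)\, G(b) = 0$.

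Since $b$ was an arbitrary element of $C^{\sigma,\delta}(a)$, this shows $FG \in I(C^{\sigma,\delta}(a))$, so the left ideal is two-sided. (If $C^{\sigma,\delta}(a)$ happens not to be algebraic, then $I(C^{\sigma,\delta}(a)) = \{ 0 \}$ and there is nothing to prove.) I do not anticipate a genuine obstacle here: the product rule carries all the weight, and the only subtlety is recognizing that a conjugacy class is closed under taking further $(\sigma,\delta)$-conjugates, which is a direct consequence of the transitivity of the conjugacy relation.
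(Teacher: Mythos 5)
Your proof is correct. The paper does not actually prove this lemma itself---it only cites \cite{algebraic-conjugacy} and \cite{multivariateskew}---but your argument via the product rule (Lemma~\ref{lemma product rule}), combined with the observation that a conjugacy class is closed under taking further conjugates, is precisely the standard argument given in those references, so there is nothing to add.
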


\begin{theorem} \label{th min skew pol of conjugacy class with mult}
Let $ a \in \mathbb{F} $ and $ r \in \mathbb{Z}_+ $, and assume that all conjugacy classes are algebraic. Let $ a_1, a_2, \ldots, a_m \in C^{\sigma,\delta}(a) $ form a P-basis of $ C^{\sigma,\delta}(a) $. The following hold:
\begin{enumerate}
\item
If $ \mathbf{b} = (b_1, b_2, \ldots, b_r) \in \mathbb{F}^r $ is a $ (\sigma,\delta) $-multiplicity sequence such that $ b_1 \in C^{\sigma,\delta}(a) $, then $ P_\mathbf{b} $ divides $ F_{C^{\sigma,\delta}(a)}^r $ on the right.
\item
It holds that $ F_\Omega = F_{C^{\sigma,\delta}(a)}^r $, where $ \Omega = \{ P_{\mathbf{a}_1}, P_{\mathbf{a}_2}, \ldots, P_{\mathbf{a}_m} \} $ and $ \mathbf{a}_i = (a_{i,1}, a_{i,2}, \ldots, $ $ a_{i,r}) \in \mathbb{F}^r $ are $ (\sigma,\delta) $-multiplicity sequences with $ a_{i,1} = a_i $, for $ i = 1,2, \ldots, m $.
\item
The subset of $ \mathcal{U}_2 $ from Equation (\ref{eq universal set II}) given by
$$ C^{\sigma,\delta}_r(a) = \{ P_{\mathbf{b}} \mid \mathbf{b} \in \mathbb{F}^i, (\sigma,\delta) \textrm{-multiplicity seq.}, b_1 \in C^{\sigma,\delta}(a), 1 \leq i \leq r \} $$
is P-closed with minimal skew polynomial $ F_{C^{\sigma,\delta}(a)}^r $, P-basis $ \Omega $ and $ {\rm Rk}(C^{\sigma,\delta}_r(a)) = r \cdot {\rm Rk}(C^{\sigma,\delta}(a)) $. It may be seen as the conjugacy class of $ a $ with multiplicity $ r $.
\end{enumerate}
\end{theorem}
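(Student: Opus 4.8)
The plan is to prove items 1--3 in that order, writing $ C = C^{\sigma,\delta}(a) $, $ K_a = K^{\sigma,\delta}_a $ and $ F_C = F_{C^{\sigma,\delta}(a)} $ for brevity, and recalling from Lemmas \ref{lemma P-generators iff min skew pols equal} and \ref{lemma algebraic conj class 2} that $ m := \deg(F_C) = {\rm Rk}(C) = \dim^R_{K_a}(\mathbb{F}) $. Two standing facts will be used throughout. First, $ (F_C) = I(C) $ is two-sided (Lemma \ref{lemma ideal of conjugacy twosided}), hence so is $ (F_C^r) $, and powers of $ F_C $ may be moved freely across products. Second, $ Z_{\mathcal{U}_0}(F_C^r) = C $: the inclusion $ \supseteq $ is clear, and for $ \subseteq $ I would induct on $ r $, writing $ F_C^r = F_C\cdot F_C^{r-1} $ and, given $ F_C^r(d)=0 $, applying the product rule (Lemma \ref{lemma product rule}): if $ \gamma = F_C^{r-1}(d)\neq 0 $ then $ F_C(d^\gamma)=0 $, so $ d^\gamma\in Z_{\mathcal{U}_0}(F_C)=\overline{C}=C $ by Lemma \ref{lemma algebraic conj class 2}, whence $ d\in C $; if $ \gamma=0 $ then $ d\in Z_{\mathcal{U}_0}(F_C^{r-1})=C $ by induction.

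For item 1, I would induct on the length $ r $ of $ \mathbf{b} $. The case $ r=1 $ is $ b_1\in C=Z_{\mathcal{U}_0}(F_C) $. For the step, Proposition \ref{prop monotonicity multiplicity seqs} gives that $ \mathbf{b}'=(b_1,\ldots,b_{r-1}) $ is a $ (\sigma,\delta) $-multiplicity sequence, so by induction $ F_C^{r-1}=G P_{\mathbf{b}'} $ for some $ G $, hence $ F_C^r = F_C G P_{\mathbf{b}'} $ and it suffices to show $ (F_C G)(b_r)=0 $. By Theorem \ref{th charact multiplicity seqs in alg conj}, all entries of $ \mathbf{b} $ lie in $ C^{\sigma,\delta}(b_1)=C $, so $ b_r\in C $; by the product rule, either $ G(b_r)=0 $, in which case we are done, or $ \beta=G(b_r)\neq 0 $ and $ (F_C G)(b_r)=F_C(b_r^\beta)\beta=0 $ since $ b_r^\beta\in C=Z_{\mathcal{U}_0}(F_C) $. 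For item 2, item 1 gives that each $ P_{\mathbf{a}_i} $ divides $ F_C^r $ on the right, so $ F_C^r\in I(\Omega) $ and $ F_\Omega\mid F_C^r $ on the right, whence $ \deg(F_\Omega)\leq rm $. Since $ \{a_{1,1},\ldots,a_{m,1}\}=\{a_1,\ldots,a_m\} $ is a P-basis of $ C $ and hence P-independent, Theorem \ref{th P-independence multiplicity seqs} shows $ \Omega $ is P-independent, so $ \deg(F_\Omega)=\sum_{i=1}^m \deg(P_{\mathbf{a}_i})=rm $. As $ F_\Omega $ and $ F_C^r $ are monic of the same degree with $ F_\Omega\mid F_C^r $ on the right, $ F_\Omega=F_C^r $.

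For item 3, note $ \Omega\subseteq C^{\sigma,\delta}_r(a)\subseteq\mathcal{U}_2 $ (every defining sequence is a multiplicity sequence), so item 2 gives $ I(C^{\sigma,\delta}_r(a))\subseteq I(\Omega)=(F_\Omega)=(F_C^r) $; conversely, for a multiplicity sequence $ \mathbf{b} $ of length $ i\leq r $ with $ b_1\in C $, item 1 (with $ i $ in place of $ r $) gives $ P_\mathbf{b}\mid F_C^i\mid F_C^r $, so $ F_C^r\in I(C^{\sigma,\delta}_r(a)) $. Thus $ I(C^{\sigma,\delta}_r(a))=(F_C^r) $ and $ F_{C^{\sigma,\delta}_r(a)}=F_C^r $. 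It then remains to prove that $ C^{\sigma,\delta}_r(a) $ is P-closed; granting this, $ {\rm Rk}(C^{\sigma,\delta}_r(a))=\deg(F_C^r)=r\deg(F_C)=r\cdot{\rm Rk}(C) $, and since $ \Omega $ is P-independent with $ F_\Omega=F_{C^{\sigma,\delta}_r(a)} $ and $ \Omega\leq C^{\sigma,\delta}_r(a) $, Lemma \ref{lemma P-generators iff min skew pols equal} shows $ \Omega $ is a P-basis. By Proposition \ref{prop properties alg geom}, $ \overline{C^{\sigma,\delta}_r(a)}=Z_{\mathcal{U}_2}(I(C^{\sigma,\delta}_r(a)))=Z_{\mathcal{U}_2}(F_C^r) $, and $ C^{\sigma,\delta}_r(a)\subseteq Z_{\mathcal{U}_2}(F_C^r) $ is item 1 again; so I must show that any $ P_\mathbf{c}\in\mathcal{U}_2 $, with $ \mathbf{c}=(c_1,\ldots,c_j) $ a multiplicity sequence and $ P_\mathbf{c}\mid F_C^r $ on the right, lies in $ C^{\sigma,\delta}_r(a) $: here $ c_1\in Z_{\mathcal{U}_0}(F_C^r)=C $, and the point is the length bound $ j\leq r $.

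This length bound is the main obstacle. I would argue by contradiction: if $ j\geq r+1 $, replace $ \mathbf{c} $ by its truncation to $ r+1 $ entries (still a multiplicity sequence dividing $ F_C^r $ on the right, by Proposition \ref{prop monotonicity multiplicity seqs}), so assume $ j=r+1 $. Writing $ a_i=a^{\beta_i} $, Theorem \ref{th P-independence multiplicity seqs} together with $ m=\dim^R_{K_a}(\mathbb{F}) $ shows that $ \beta_1,\ldots,\beta_m $ is a right $ K_a $-basis of $ \mathbb{F} $ (being right $ K_a $-independent and of the right cardinality). Write $ c_1=a^{\beta_0} $ and expand $ \beta_0 $ in this basis; picking an index $ i_0 $ with nonzero coefficient, $ \{\beta_0\}\cup\{\beta_i: i\neq i_0\} $ is again a right $ K_a $-basis, so Theorem \ref{th P-independence multiplicity seqs} shows $ \Xi=\{P_\mathbf{c}\}\cup\{P_{\mathbf{a}_i}: i\neq i_0\}\subseteq\mathcal{U}_2 $ is P-independent of size $ m $. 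But every element of $ \Xi $ divides $ F_C^r $ on the right ($ P_\mathbf{c} $ by hypothesis, $ P_{\mathbf{a}_i} $ by item 1), so $ F_\Xi\mid F_C^r $ and $ \deg(F_\Xi)\leq rm $, while P-independence forces $ \deg(F_\Xi)=(r+1)+(m-1)r=rm+1 $, a contradiction. (An equivalent, slightly more computational route is to apply Theorem \ref{th upper bound multiplicities by degree II} to the P-independent set $ \{c_1\}\cup\{a_i: i\neq i_0\} $ with multiplicities $ r+1,r,\ldots,r $.) Hence $ j\leq r $, $ P_\mathbf{c}\in C^{\sigma,\delta}_r(a) $, and $ C^{\sigma,\delta}_r(a) $ is P-closed, completing the proof.
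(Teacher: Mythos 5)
Your proof is correct. Items 1 and 2 follow essentially the paper's route, with one small variation in item 1: the paper peels off the \emph{first} entry $b_1$, writing $F_{C^{\sigma,\delta}(a)} = G(x-b_1)$ and invoking the two-sidedness of $I(C^{\sigma,\delta}(a))$ (Lemma \ref{lemma ideal of conjugacy twosided}) to commute $G$ past $F_{C^{\sigma,\delta}(a)}^{r-1}$, whereas you peel off the \emph{last} entry $b_r$ and close the induction with the product rule and the fact that $F_{C^{\sigma,\delta}(a)}$ vanishes on all of $C^{\sigma,\delta}(a)$; both are valid, and yours trades the two-sided-ideal lemma for the product rule. (Both proofs, yours and the paper's, quietly lean on Theorem \ref{th charact multiplicity seqs in alg conj} to place all entries of $\mathbf{b}$ in $C^{\sigma,\delta}(b_1)$, a fact that can also be extracted directly from Proposition \ref{prop monotonicity multiplicity seqs} and Lemma \ref{lemma algebraic conj class 1}.) Where you genuinely diverge is item 3: the paper dismisses $I(C^{\sigma,\delta}_r(a)) = (F_{C^{\sigma,\delta}(a)}^r)$ and $Z_{\mathcal{U}_2}(\{F_{C^{\sigma,\delta}(a)}^r\}) = C^{\sigma,\delta}_r(a)$ as ``easy to see,'' while you supply the two nontrivial ingredients explicitly: the computation $Z_{\mathcal{U}_0}(F_{C^{\sigma,\delta}(a)}^r) = C^{\sigma,\delta}(a)$ by induction with the product rule, and -- the real content -- the length bound $j \leq r$ for any multiplicity sequence $\mathbf{c}$ with $P_{\mathbf{c}}$ dividing $F_{C^{\sigma,\delta}(a)}^r$, obtained by a basis-exchange argument in $\mathbb{F}$ over $K^{\sigma,\delta}_a$ combined with Theorem \ref{th P-independence multiplicity seqs} and a degree count. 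That exchange argument is clean and correct (note only that you should record why the first entries of $\Xi$ remain distinct: if $c_1 = a_i$ then $\beta_0 \in \beta_i K^{\sigma,\delta}_a$, forcing $i = i_0$), and it makes explicit a step the paper leaves to the reader.
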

\begin{proof}
Denote $ P = F_{C^{\sigma,\delta}(a)} $ for simplicity. We start by proving item 1. We will show by induction in $ r \in \mathbb{Z}_+ $ that $ P_\mathbf{b} $ divides $ P^r $ on the right. The case $ r = 1 $ is trivial by definition. Consider now that $ r \geq 2 $. Since $ b_1 \in C^{\sigma,\delta}(a) $, there exists $ G \in \mathbb{F}[x;\sigma,\delta] $ such that $ P = G(x-b_1) $. By Lemma \ref{lemma ideal of conjugacy twosided}, there exists $ H \in \mathbb{F}[x;\sigma,\delta] $ such that 
\begin{equation}
P^r = P^{r-1} G (x-b_1) = H P^{r-1} (x-b_1).
\label{eq proof multiplicities on full conjugacy}
\end{equation}
By Theorem \ref{th charact multiplicity seqs in alg conj}, $ \mathbf{b}^\prime = (b_2, b_3, \ldots, b_r) \in \mathbb{F}^{r-1} $ is also a $ (\sigma,\delta) $-multiplicity sequence such that $ b_2 \in C^{\sigma,\delta}(a) $. Thus, by induction hypothesis, $ P_{\mathbf{b}^\prime} $ divides $ P^{r-1} $ on the right. Hence $ P_\mathbf{b} = P_{\mathbf{b}^\prime} (x-b_1) $ divides $ P^r $ on the right by (\ref{eq proof multiplicities on full conjugacy}).

Now, we prove item 2. Since $ \Omega $ is P-independent by Theorem \ref{th P-independence multiplicity seqs}, we deduce that $ \deg(F_\Omega) = \deg(P^r) = mr $. Moreover, as $ P_{\mathbf{a}_i} $ divides $ P^r $ on the right, for $ i = 1,2, \ldots, m $, we conclude that $ F_\Omega $ divides $ P^r $ on the right. Since they have the same degree, then $ F_\Omega = P^r $ and we are done.

Finally, we prove item 3. With the results obtained above, combined with the product rule (Lemma \ref{lemma product rule}) and Proposition \ref{prop properties alg geom}, it is easy to see that $ I(C^{\sigma,\delta}_r(a)) = (F_{C^{\sigma,\delta}(a)}^r) $ and $ Z_{\mathcal{U}_2}(I(C^{\sigma,\delta}_r(a))) = Z_{\mathcal{U}_2}(\{ F_{C^{\sigma,\delta}(a)}^r \}) = C^{\sigma,\delta}_r(a) $, for $ \mathcal{U}_2 $ as in Equation (\ref{eq universal set II}). Therefore, $ C^{\sigma,\delta}_r(a) $ is P-closed with minimal skew polynomial $ F_{C^{\sigma,\delta}(a)}^r $. Furthermore, $ \Omega $ is P-independent by Theorem \ref{th P-independence multiplicity seqs}, and it is a set of P-generators of $ C^{\sigma,\delta}_r(a) $ by Lemma \ref{lemma P-generators iff min skew pols equal}, since $ F_\Omega = F_{C^{\sigma,\delta}(a)}^r $. Thus, $ \Omega $ is a P-basis of $ C^{\sigma,\delta}_r(a) $.
\end{proof}

It follows that, when considering a full conjugacy class, multiplicities as in Definition \ref{def multiplicity II} work as expected and do not depend on $ (\sigma,\delta) $-multiplicity sequences.

\begin{corollary} \label{cor multiplicities on full conjugacy}
Let the notation and assumptions be as in Theorem \ref{th min skew pol of conjugacy class with mult}. Given $ F \in \mathbb{F}[x;\sigma,\delta] $, and with multiplicities as in Definition \ref{def multiplicity II}, the following are equivalent:
\begin{enumerate}
\item
Every element in $ C^{\sigma,\delta}(a) $ is a zero of $ F $ of multiplicity $ r $.
\item
Every element in a P-basis of $ C^{\sigma,\delta}(a) $ is a zero of $ F $ of multiplicity $ r $.
\item
$ F_{C^{\sigma,\delta}(a)}^r $ divides $ F $ (on the right, on the left or in between, by Lemma \ref{lemma ideal of conjugacy twosided}).
\end{enumerate}
\end{corollary}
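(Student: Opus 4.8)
The plan is to prove the cycle of implications $3 \Rightarrow 1 \Rightarrow 2 \Rightarrow 3$, drawing almost everything from Theorem \ref{th min skew pol of conjugacy class with mult} together with Corollary \ref{cor existence multiplicity seqs} and the algebraic-geometry dictionary of Proposition \ref{prop properties alg geom}; no fresh computation with Hasse derivatives or conjugacy classes should be required, and the standing assumption of this section that $\sigma$ is surjective will be used only to pass between left and right.

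For $3 \Rightarrow 1$, suppose $F_{C^{\sigma,\delta}(a)}^r$ divides $F$ on the right and fix an arbitrary $b \in C^{\sigma,\delta}(a)$. Since $C^{\sigma,\delta}(b) = C^{\sigma,\delta}(a)$ is algebraic, Lemma \ref{lemma algebraic conj class 2} gives $\dim^R_{K^{\sigma,\delta}_b}(\mathbb{F}) < \infty$, so by Corollary \ref{cor existence multiplicity seqs} there is a $(\sigma,\delta)$-multiplicity sequence $\mathbf{b} = (b_1, b_2, \ldots, b_r)$ with $b_1 = b$. By item 1 of Theorem \ref{th min skew pol of conjugacy class with mult}, $P_{\mathbf{b}}$ divides $F_{C^{\sigma,\delta}(a)}^r$ on the right, hence divides $F$ on the right, so $b$ is a zero of $F$ of multiplicity $r$ via $\mathbf{b}$. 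The implication $1 \Rightarrow 2$ is immediate, since the elements $a_1, a_2, \ldots, a_m$ of a P-basis of $C^{\sigma,\delta}(a)$ all lie in $C^{\sigma,\delta}(a)$.

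For $2 \Rightarrow 3$, choose for each $i = 1, 2, \ldots, m$ a $(\sigma,\delta)$-multiplicity sequence $\mathbf{a}_i = (a_{i,1}, a_{i,2}, \ldots, a_{i,r})$ with $a_{i,1} = a_i$ witnessing that $a_i$ is a zero of $F$ of multiplicity $r$, i.e. $P_{\mathbf{a}_i}$ divides $F$ on the right. Then $F \in \bigcap_{i=1}^m I(\{ P_{\mathbf{a}_i} \}) = I(\Omega)$ with $\Omega = \{ P_{\mathbf{a}_1}, P_{\mathbf{a}_2}, \ldots, P_{\mathbf{a}_m} \}$, by items 2 and 7 of Proposition \ref{prop properties alg geom}. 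By item 2 of Theorem \ref{th min skew pol of conjugacy class with mult}, $F_\Omega = F_{C^{\sigma,\delta}(a)}^r$, and since $I(\Omega) = (F_\Omega)$ we conclude that $F_{C^{\sigma,\delta}(a)}^r$ divides $F$ on the right. Finally, $I(C^{\sigma,\delta}(a))$, hence $(F_{C^{\sigma,\delta}(a)}^r)$, is two-sided by Lemma \ref{lemma ideal of conjugacy twosided}; combined with the left--right symmetry afforded by Lemma \ref{lemma ore left right isomorph} (valid since $\sigma$ is surjective), right, left and two-sided divisibility by $F_{C^{\sigma,\delta}(a)}^r$ coincide, which justifies the parenthetical remark in item 3.

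The only genuinely delicate point is $2 \Rightarrow 3$: a priori the minimal skew polynomial of $\Omega$ could depend on which $(\sigma,\delta)$-multiplicity sequences are picked to witness condition 2, and such sequences starting at a fixed point are not unique in general. What makes the argument work is precisely the strength of item 2 of Theorem \ref{th min skew pol of conjugacy class with mult}, namely that $F_\Omega = F_{C^{\sigma,\delta}(a)}^r$ for \emph{every} such choice; consequently I anticipate no further obstacle here, all the substantive work having already been absorbed into that theorem and, upstream, into Theorem \ref{th P-independence multiplicity seqs}.
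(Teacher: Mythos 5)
Your proof is correct and follows exactly the route the paper intends: the paper states this corollary without a written proof, as an immediate consequence of Theorem \ref{th min skew pol of conjugacy class with mult} (items 1 and 2), Corollary \ref{cor existence multiplicity seqs} and Lemma \ref{lemma ideal of conjugacy twosided}, which are precisely the ingredients you assemble. Your closing observation that item 2 of that theorem holds for \emph{every} choice of witnessing $(\sigma,\delta)$-multiplicity sequences is indeed the point that makes $2 \Rightarrow 3$ go through.
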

%
%

It is natural to ask if we could have arrived at Corollary \ref{cor multiplicities on full conjugacy} from the notion of multiplicity in Definition \ref{def multiplicity I}. We now show that this is not the case. 

\begin{example} \label{ex full conjugacy class pathology for naive mult}
Consider the setting of Example \ref{ex power of linear term is no multiplicity}, and further assume that $ m = 2 $. Using that $ a^{q-1} = -1 $, the reader may verify that $ a = \gamma^{ \frac{q+1}{2} } $ and $ b = - \gamma^{ \frac{q+1}{2} } $ are conjugate. Observe also that $ \{ a,b \} $ is P-independent, $ K^{\sigma,\delta}_a = \mathbb{F}_q $ and, by Lemma \ref{lemma algebraic conj class 2}, $ {\rm Rk}(C^{\sigma,\delta}(a)) = \dim_{\mathbb{F}_q}(\mathbb{F}_{q^m}) = m = 2 $. Therefore, $ \{ a,b \} $ forms a P-basis of $ C^{\sigma,\delta}(a) $. However, 
$$ F_{C^{\sigma,\delta}(a)} = (x-a)^2 = (x-b)^2 = x^2 + \gamma^{q+1} \in \mathbb{F}_q[x^2] . $$
Hence $ F_{C^{\sigma,\delta}(a)} $ vanishes at a P-basis of $ C^{\sigma,\delta}(a) $ with multiplicity $ 2 $ according to Definition \ref{def multiplicity I}. Clearly $ F_{C^{\sigma,\delta}(a)}^2 $ does not divide $ F_{C^{\sigma,\delta}(a)} $, and $ \deg(F_{C^{\sigma,\delta}(a)}) = 2 $, hence Corollary \ref{cor multiplicities on full conjugacy} and Theorem \ref{th upper bound multiplicities by degree II} do not hold for multiplicities as in Definition \ref{def multiplicity I}. 
\end{example}

\section{Particular cases and worked examples} \label{sec particular cases}

In this section, we study particular cases for which all conjugacy classes are algebraic, hence the results in the previous section hold. For worked examples, see \cite{skew-multi}.

We start with Bolotnikov's characterization of multiplicity sequences of quaternions \cite{bolotnikov-conf, bolotnikov-zeros} (called \textit{spherical chains} there). Quaternionic Lagrange interpolation has recently been motivated by the study of tight frames \cite{waldron-interpol, waldron-frames}. Denote by $ \mathbb{R} $ the real field. The division ring of quaternions is given by 
$$ \mathbb{H} = \{ a + b \mathbf{i} + c \mathbf{j} + d \mathbf{k} \mid a,b,c,d \in \mathbb{R} \}, $$ 
where $ \mathbf{i} $, $ \mathbf{j} $ and $ \mathbf{k} $ commute with real numbers and $ \mathbf{i}^2 = \mathbf{j}^2 = \mathbf{k}^2 = \mathbf{i} \mathbf{j} \mathbf{k} = - 1 $. Given a quaternion $ q = a + b \mathbf{i} + c \mathbf{j} + d \mathbf{k} \in \mathbb{H} $, where $ a,b,c,d \in \mathbb{R} $, the real and imaginary parts of $ q $ are given, respectively, by $ {\rm Re}(q) = a $ and $ {\rm Im}(q) = b \mathbf{i} + c \mathbf{j} + d \mathbf{k} $. The quaternionic conjugate of $ q \in \mathbb{H} $ is given by $ \overline{q} = {\rm Re}(q) - {\rm Im}(q) \in \mathbb{H} $, whereas its quaternionic modulus is given by $ |q| = \sqrt{q \overline{q}} $. Two quaternions $ q, t \in \mathbb{H} $ are conjugate, in the sense of division rings, if there exists $ u \in \mathbb{H}^* $ such that $ t = u q u^{-1} $. This is actually equivalent to $ {\rm Re} (q) = {\rm Re} (t) $ and $ |q| = |t| $ (see \cite[p. 333]{brenner}). The following result is a consequence of Theorem \ref{th charact multiplicity seqs in alg conj} and was obtained in \cite[Lemma 4.1]{bolotnikov-zeros} (see also \cite[Th. 3.1]{bolotnikov-conf}).

\begin{corollary} [\textbf{Quaternions \cite{bolotnikov-zeros}}] \label{cor multiplicity over quaternions}
Let $ \mathbb{F} = \mathbb{H} $ and $ (\sigma, \delta) = ({\rm Id}, 0) $. The vector $ (a_1, a_2, $ $ \ldots, $ $ a_r) \in \mathbb{H}^r $ is a $ (\sigma,\delta) $-multiplicity sequence if, and only if, either $ a_1 = a_2 = \ldots = a_r \in \mathbb{R} $, or $ a_1, a_2, \ldots, a_r \in \mathbb{H} \setminus \mathbb{R} $ and for $ i = 1,2, \ldots, r-1 $, it holds that
$$ {\rm Re}(a_{i+1}) = {\rm Re}(a_i), \quad |a_{i+1}| = |a_i|, \quad \textrm{and} \quad \overline{a}_{i+1} \neq a_i . $$
In particular, $ (a,\ldots,a) \in \mathbb{H}^r $ is a $ (\sigma,\delta) $-multiplicity sequence, for $ a \in \mathbb{H} $ and $ r \in \mathbb{Z}_+ $.
\end{corollary}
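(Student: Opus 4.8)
The plan is to specialize Theorem \ref{th charact multiplicity seqs in alg conj}, which requires checking that its hypotheses hold for $ \mathbb{F} = \mathbb{H} $, $ (\sigma,\delta) = ({\rm Id}, 0) $: the morphism $ \sigma = {\rm Id} $ is surjective, and every conjugacy class of $ \mathbb{H} $ is algebraic since $ \mathbb{H} $ is finite-dimensional (dimension $ 2 $) over the centralizer $ K^{{\rm Id},0}_a $, which is $ \mathbb{R}(a) \cong \mathbb{R} $ if $ a \notin \mathbb{R} $ and all of $ \mathbb{H} $ if $ a \in \mathbb{R} $. The case $ a_1 \in \mathbb{R} $ is immediate: then $ C^{{\rm Id},0}(a_1) = \{ a_1 \} $ is a single point, so the only sequence starting at $ a_1 $ with $ a_1 $ as its unique zero is $ (a_1, a_1, \ldots, a_1) $, and conversely this is trivially a multiplicity sequence because $ (x - a_1)^r $ has $ a_1 $ as its only zero (indeed $ x - a_1 $ is central and $ (x-a_1)^r = (x-a_1)^r $ splits uniquely). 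For the main case $ a_1 \notin \mathbb{R} $, one uses the equivalence $ 1 \Longleftrightarrow 4 $ of Theorem \ref{th charact multiplicity seqs in alg conj}: $ \mathbf{a} $ is a multiplicity sequence if and only if, for $ i = 1, \ldots, r-1 $, there is $ \beta_i \in \mathbb{H}^* $ with $ a_{i+1} = u_i a_i u_i^{-1} $ (writing the conjugation additively as in the paper, $ a^\beta = \beta a \beta^{-1} $ since $ \delta = 0 $) and $ \beta_i \notin \mathcal{V}^{{\rm Id},0}_{a_i} = \{ \beta a_i - a_i \beta \mid \beta \in \mathbb{H} \} $.

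The key computation is to identify the set $ \mathcal{V}_{a_i} = \{ \beta a_i - a_i \beta : \beta \in \mathbb{H} \} $, i.e. the image of the map $ \beta \mapsto [\beta, a_i] $. By Lemma \ref{lemma image of varphi is hyperplane} this is a right $ K_{a_i} = \mathbb{R}(a_i) $-subspace of $ \mathbb{H} $ of real dimension $ \dim^R_{\mathbb{R}(a_i)}(\mathbb{H}) - 1 $; since $ \mathbb{H} $ has $ \mathbb{R}(a_i) $-dimension $ 2 $, $ \mathcal{V}_{a_i} $ has $ \mathbb{R}(a_i) $-dimension $ 1 $, hence real dimension $ 2 $. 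Concretely, writing $ a_i = \alpha + v $ with $ \alpha = {\rm Re}(a_i) \in \mathbb{R} $ and $ v = {\rm Im}(a_i) $ a nonzero pure quaternion, $ [\beta, a_i] = [\beta, v] $, whose image is exactly the real-orthogonal complement of $ v $ inside the pure quaternions — equivalently, $ \mathcal{V}_{a_i} = \{ w \in \mathbb{H} : {\rm Re}(w) = 0, \ \langle w, v\rangle = 0 \} $, a real $ 2 $-plane. Now $ a_{i+1} = u_i a_i u_i^{-1} $ with $ a_i \notin \mathbb{R} $ forces $ {\rm Re}(a_{i+1}) = {\rm Re}(a_i) $ and $ |a_{i+1}| = |a_i| $ (conjugate quaternions share real part and modulus, by the cited fact from \cite{brenner}), and in particular $ a_{i+1} \notin \mathbb{R} $. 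So the content is to translate the condition $ \beta_i \notin \mathcal{V}_{a_i} $ into $ \overline{a}_{i+1} \neq a_i $. For this I would exhibit an explicit $ \beta_i $ realizing the conjugation: given $ a_i, a_{i+1} $ with equal real part and modulus, write $ a_i = \alpha + v $, $ a_{i+1} = \alpha + v' $ with $ |v| = |v'| $; if $ v' \neq -v $ one may take $ \beta_i = v' + v $ (up to a positive real scalar and/or a summand in $ \mathbb{R}(a_i) $, which do not affect membership in $ \mathcal{V}_{a_i} $), and if $ v' = -v $, i.e. $ a_{i+1} = \overline{a}_i $, the only conjugating elements $ \beta $ lie in $ \mathcal{V}_{a_i} $. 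One then checks $ \beta_i = v' + v \notin \mathcal{V}_{a_i} $: its real part is $ 0 $ but $ \langle v' + v, v \rangle = |v|^2 + \langle v', v\rangle $, and since $ a_{i+1} \neq \overline{a}_i $ means $ v' \neq -v $, hence $ \langle v', v\rangle > -|v|^2 $, the inner product is strictly positive, so $ \beta_i \notin \mathcal{V}_{a_i} $. Conversely, if $ a_{i+1} = \overline{a}_i $ then every $ \beta $ with $ \beta a_i \beta^{-1} = \overline{a}_i $ satisfies $ \beta v = -v\beta $, i.e. $ \beta v + v \beta = 0 $, so $ {\rm Re}(\beta) = 0 $ and $ \langle \beta, v \rangle = 0 $ (anticommuting pure quaternions are orthogonal), giving $ \beta \in \mathcal{V}_{a_i} $; thus no valid $ \beta_i $ exists outside $ \mathcal{V}_{a_i} $ and $ (a_i, a_{i+1}) $ fails to be a multiplicity sequence.

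Assembling: by the equivalence $ 1 \Longleftrightarrow 3 $ of Theorem \ref{th charact multiplicity seqs in alg conj}, $ \mathbf{a} $ is a $ (\sigma,\delta) $-multiplicity sequence iff each pair $ (a_i, a_{i+1}) $ is, and by the pair analysis above this holds iff $ {\rm Re}(a_{i+1}) = {\rm Re}(a_i) $, $ |a_{i+1}| = |a_i| $ and $ \overline{a}_{i+1} \neq a_i $ for all $ i $; combined with the real case this is exactly the stated dichotomy. The final sentence (that $ (a, \ldots, a) $ is always a multiplicity sequence) follows by taking $ a_{i+1} = a_i = a $: if $ a \in \mathbb{R} $ it is the real case, and if $ a \notin \mathbb{R} $ then trivially $ {\rm Re}(a) = {\rm Re}(a) $, $ |a| = |a| $, and $ \overline{a} \neq a $ since $ {\rm Im}(a) \neq 0 $. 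I expect the main obstacle to be the explicit description of $ \mathcal{V}^{{\rm Id},0}_{a} $ as a real hyperplane inside the pure quaternions and the careful bookkeeping that membership of $ \beta_i $ in $ \mathcal{V}_a $ is unaffected by the freedom of scaling $ \beta_i $ by $ K_a $ and adding a central term — once that is pinned down, the equivalence with $ \overline{a}_{i+1} \neq a_i $ is a short quaternion computation.
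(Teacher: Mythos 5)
Your proof is correct and follows the same route as the paper: both reduce to Theorem \ref{th charact multiplicity seqs in alg conj} (items 3 and 4) after noting via Lemma \ref{lemma algebraic conj class 2} that all conjugacy classes of $ \mathbb{H} $ are algebraic, and your explicit identification of $ \mathcal{V}^{\sigma,\delta}_{a} $ as the plane of pure quaternions orthogonal to $ {\rm Im}(a) $ merely supplies the computation the paper leaves implicit. One slip worth fixing: for $ a \in \mathbb{H} \setminus \mathbb{R} $ the centralizer $ K^{\sigma,\delta}_a = \mathbb{R}(a) $ is isomorphic to $ \mathbb{C} $, not $ \mathbb{R} $ (which is consistent with your later, correct, use of $ \dim^R_{\mathbb{R}(a)}(\mathbb{H}) = 2 $).
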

\begin{proof}
First, since $ \dim_{\mathbb{R}}(\mathbb{H}) = 4 $, all conjugacy classes are algebraic by Lemma \ref{lemma algebraic conj class 2}. Second, if $ a \in \mathbb{R} $, then $ C^{\sigma,\delta}(a) = \{ a \} $, and if $ a \in \mathbb{H} \setminus \mathbb{R} $, then $ C^{\sigma,\delta}(a) \cap \mathbb{R} = \varnothing $. From these facts and Theorem \ref{th charact multiplicity seqs in alg conj}, if $ a,b \in \mathbb{H} $ are conjugate in the sense of division rings, then $ (a,b) \in \mathbb{H}^2 $ is not a $ (\sigma,\delta) $-multiplicity sequence if, and only if, $ a,b \in \mathbb{H} \setminus \mathbb{R} $ and $ b = \overline{a} $, and the characterization follows. 
\end{proof}

We now illustrate Corollary \ref{cor multiplicities on full conjugacy} for $ \mathbb{F} = \mathbb{H} $ and $ (\sigma,\delta) = ({\rm Id},0) $. Recall that, given $ a \in \mathbb{H} $, its conjugacy class is $ C^{\sigma,\delta}(a) = \{ a \} $ if $ a \in \mathbb{R} $, and $ C^{\sigma,\delta}(a) = \{ b \in \mathbb{H} \mid {\rm Re}(b) = {\rm Re}(a), |b| = |a| \} $ if $ a \in \mathbb{H} \setminus \mathbb{R} $. In the case $ a \in \mathbb{H} \setminus \mathbb{R} $, this is due to the fact that
$$ F_{C^{\sigma,\delta}(a)} = (x-a) (x - \overline{a}) = x^2 -2 {\rm Re}(a) x + |a|^2  \in \mathbb{R}[x]. $$
Thus the following result is immediate from Corollary \ref{cor multiplicities on full conjugacy}.

\begin{corollary}
Consider $ \mathbb{F} = \mathbb{H} $, $ (\sigma,\delta) = ({\rm Id},0) $, $ a \in \mathbb{H} $, $ F \in \mathbb{H}[x] $ and $ r \in \mathbb{Z}_+ $. 
\begin{enumerate}
\item
If $ a \in \mathbb{R} $, then $ a $ is a zero of $ F $ of multiplicity $ r $ (according to Definition \ref{def multiplicity II}) if, and only if, the central polynomial $ (x-a)^r \in \mathbb{R}[x] $ divides $ F $.
\item
If $ a \in \mathbb{H} \setminus \mathbb{R} $, then $ a $ and $ \overline{a} $ are zeros of $ F $, each of multiplicity $ r $ (according to Definition \ref{def multiplicity II}) if, and only if, $ F $ is divisible by the central polynomial
$$ (x-a)^r (x - \overline{a})^r = \left( x^2 -2 {\rm Re}(a) x + |a|^2 \right)^r \in \mathbb{R}[x]. $$
\end{enumerate}
\end{corollary}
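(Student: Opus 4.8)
The plan is to deduce both statements directly from Corollary \ref{cor multiplicities on full conjugacy}, which is available here because $ \dim_\mathbb{R}(\mathbb{H}) = 4 < \infty $, so all conjugacy classes of $ \mathbb{H} $ are algebraic by Lemma \ref{lemma algebraic conj class 2}. The only real content is, in each of the two cases, to identify a convenient P-basis of $ C^{\sigma,\delta}(a) $ and to rewrite the power $ F_{C^{\sigma,\delta}(a)}^r $ of its minimal skew polynomial as the central polynomial displayed in the statement.

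For item 1, with $ a \in \mathbb{R} $ one has $ C^{\sigma,\delta}(a) = \{ a \} $, so $ \{ a \} $ is its own P-basis and $ F_{C^{\sigma,\delta}(a)} = x - a \in \mathbb{R}[x] $. Then ``$ a $ is a zero of $ F $ of multiplicity $ r $'' (Definition \ref{def multiplicity II}) is literally the statement that every element of the P-basis $ \{ a \} $ of $ C^{\sigma,\delta}(a) $ is such a zero, which by Corollary \ref{cor multiplicities on full conjugacy} is equivalent to $ F_{C^{\sigma,\delta}(a)}^r = (x-a)^r $ dividing $ F $; since $ (x-a)^r $ has real coefficients it is central in $ \mathbb{H}[x] $, so the three one-sided notions of divisibility coincide, and we are done.

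For item 2, with $ a \in \mathbb{H}\setminus\mathbb{R} $ one has $ a \neq \overline{a} $. The key step I would carry out is to check that $ \{ a, \overline{a} \} $ is a P-basis of $ C^{\sigma,\delta}(a) $. Using $ F_{C^{\sigma,\delta}(a)} = (x-a)(x-\overline{a}) = x^2 - 2{\rm Re}(a) x + |a|^2 \in \mathbb{R}[x] $ (recalled just before the statement), which is monic of degree $ 2 $ and vanishes at both $ a $ and $ \overline{a} $, together with the observation that no linear skew polynomial vanishes at both $ a $ and $ \overline{a} $ (evaluating $ x-c $ at $ a $ forces $ c = a $, and $ x-a $ does not vanish at $ \overline{a} \neq a $), I get that $ \{ a, \overline{a} \} $ is P-independent with minimal skew polynomial equal to $ F_{C^{\sigma,\delta}(a)} $; since $ C^{\sigma,\delta}(a) $ is P-closed of rank $ 2 $, Lemma \ref{lemma P-generators iff min skew pols equal} then gives that $ \{ a, \overline{a} \} $ is a set of P-generators, hence a P-basis. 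Applying Corollary \ref{cor multiplicities on full conjugacy} with this P-basis, ``$ a $ and $ \overline{a} $ are each zeros of $ F $ of multiplicity $ r $'' is equivalent to $ F_{C^{\sigma,\delta}(a)}^r $ dividing $ F $. Finally, writing $ q = x^2 - 2{\rm Re}(a) x + |a|^2 $, which is central, a one-line induction gives $ (x-a)^r(x-\overline{a})^r = (x-a)^{r-1} q (x-\overline{a})^{r-1} = q (x-a)^{r-1}(x-\overline{a})^{r-1} = q^r = F_{C^{\sigma,\delta}(a)}^r $, and centrality again makes left and right divisibility coincide.

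All computations involved are routine; the only points I would treat with a little care are the verification that $ \{ a, \overline{a} \} $ is genuinely a P-basis of $ C^{\sigma,\delta}(a) $ (equivalently, that its minimal skew polynomial has degree exactly $ 2 $) and the non-commutative rearrangement $ (x-a)^r(x-\overline{a})^r = q^r $, which works precisely because the real quadratic $ q $ lies in the center of $ \mathbb{H}[x] $.
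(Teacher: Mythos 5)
Your proposal is correct and follows essentially the same route as the paper, which derives the statement as an immediate consequence of Corollary \ref{cor multiplicities on full conjugacy} after recalling that $ C^{\sigma,\delta}(a) = \{a\} $ for $ a \in \mathbb{R} $ and that $ F_{C^{\sigma,\delta}(a)} = (x-a)(x-\overline{a}) = x^2 - 2{\rm Re}(a)x + |a|^2 \in \mathbb{R}[x] $ for $ a \in \mathbb{H}\setminus\mathbb{R} $. You merely make explicit the details the paper leaves implicit (algebraicity of the conjugacy classes via Lemma \ref{lemma algebraic conj class 2}, the verification that $ \{a,\overline{a}\} $ is a P-basis, and the rearrangement $ (x-a)^r(x-\overline{a})^r = q^r $ using centrality of $ q $), all of which are sound.
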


We now turn to cyclic Galois extensions of fields. This includes the case of skew polynomials over finite fields, whose evaluations give projective polynomials \cite{projective-pols} (Definition \ref{def evaluation high degree}) and linearized polynomials \cite{orespecial} (Definition \ref{def lin op pols}). This case is of interest in Coding Theory. For instance, linearized Reed-Solomon codes \cite{linearizedRS} provide PMDS codes with the smallest field sizes known so far \cite{cai-field, gopi-field, pmds-conjugacy, universal-lrc}. \textit{Multiplicity enhancements} are of interest in similar areas of Computer Science \cite{dvir}.

If $ \mathbb{F} $ is a field and $ \sigma $ has finite order $ m \in \mathbb{Z}_+ $, then for each $ a \in \mathbb{F}^* $, it holds that $ K^{\sigma,\delta}_a = K = \{ \beta \in \mathbb{F} \mid \sigma(\beta) = \beta \} $, and $ K \subseteq \mathbb{F} $ is a Galois extension with cyclic Galois group $ \mathcal{G} = \{ {\rm Id}, \sigma, \sigma^2, \ldots, \sigma^{m-1} \} $ of order $ m \in \mathbb{Z}_+ $. In particular, all conjugacy classes are algebraic by Lemma \ref{lemma algebraic conj class 2}. In this setting, we may assume without loss of generality that $ \delta = 0 $ (any other derivation would be inner for some $ c \in \mathbb{F} $, and the results are simply translated by $ c $, see \cite[Sec. 8.3]{cohn} or \cite[Sec. 4.1]{linearizedRS}). We will denote by $ {\rm T}_{\mathbb{F}/K}, {\rm N}_{\mathbb{F}/K} : \mathbb{F} \longrightarrow K $ the classical trace and norm maps, given for $ a \in \mathbb{F} $ by
$$ {\rm T}_{\mathbb{F}/K}(a) = \sum_{i=0}^{m-1} \sigma^i(a) \quad \textrm{and} \quad {\rm N}_{\mathbb{F}/K}(a) = \prod_{i=0}^{m-1} \sigma^i(a). $$

\begin{corollary} [\textbf{Cyclic Galois extensions}] \label{cor multiplicity over cyclic galois}
Assume that $ \mathbb{F} $ is a field, $ \sigma $ has finite order $ m \in \mathbb{Z}_+ $, let $ \delta = 0 $ and define $ K = \{ \beta \in \mathbb{F} \mid \sigma(\beta) = \beta \} \subseteq \mathbb{F} $. The vector $ \mathbf{a} = (a_1, a_2, \ldots, a_r) \in \mathbb{F}^r $ is a $ (\sigma,\delta) $-multiplicity sequence if, and only if, either $ a_1 = a_2 = \ldots = a_r = 0 $ or $ a_1 \neq 0 $ and, for $ i = 1,2, \ldots, r-1 $, there exists $ \beta_i \in \mathbb{F}^* $ such that
$$ a_{i+1} = \sigma(\beta_i)\beta_i^{-1} a_i \quad \textrm{and} \quad {\rm T}_{\mathbb{F}/K} \left( \beta_i a_i^{-1} \right) \neq 0. $$
In particular, $ (a,\ldots, a) $ is a $ (\sigma,\delta) $-multiplicity sequence if, and only if, $ {\rm T}_{\mathbb{F}/K} \left( a^{-1} \right) \neq 0 $.
\end{corollary}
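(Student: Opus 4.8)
The plan is to specialize Theorem~\ref{th charact multiplicity seqs in alg conj} to this situation, so first I would check that its hypotheses hold. A field automorphism of finite order is surjective, and for every $ a \in \mathbb{F}^* $ one has $ K^{\sigma,\delta}_a = K $ with $ \dim_K(\mathbb{F}) = m < \infty $, while $ C^{\sigma,\delta}(0) = \{ 0 \} $ is trivially algebraic; hence by Lemma~\ref{lemma algebraic conj class 2} all conjugacy classes are algebraic. The reduction to $ \delta = 0 $ is the one already indicated in the text. The case $ r = 1 $ is immediate (a singleton sequence is always a multiplicity sequence and the stated conditions are vacuous), so assume $ r \geq 2 $ and apply item~4 of Theorem~\ref{th charact multiplicity seqs in alg conj}: $ \mathbf{a} $ is a $ (\sigma,\delta) $-multiplicity sequence if, and only if, for each $ i = 1, 2, \ldots, r-1 $ there exists $ \beta_i \in \mathbb{F}^* $ with $ a_{i+1} = a_i^{\beta_i} $ and $ \beta_i \notin \mathcal{V}_{a_i} $, where $ \mathcal{V}_{a_i} $ denotes the set $ \{ \mathcal{D}^{\sigma,\delta}_{a_i}(\beta) - a_i \beta \mid \beta \in \mathbb{F} \} $.

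Next I would rewrite the two conditions using $ \delta = 0 $ and the commutativity of $ \mathbb{F} $. Since $ \mathcal{D}_{a_i}(\beta) = \sigma(\beta) a_i $, the conjugate is $ a_i^{\beta_i} = \sigma(\beta_i) a_i \beta_i^{-1} = \sigma(\beta_i)\beta_i^{-1} a_i $, which is the displayed relation, and $ \mathcal{V}_{a_i} = \{ (\sigma(\beta) - \beta) a_i \mid \beta \in \mathbb{F} \} $. For the degenerate alternative, if $ a_1 = 0 $ then $ a_i^{\beta_i} = 0 $, so $ a_{i+1} = 0 $ and inductively $ a_1 = a_2 = \cdots = a_r = 0 $; conversely $ \mathcal{V}_0 = \{ 0 \} $, so any nonzero $ \beta_i $ satisfies the constraint and $ (0,\ldots,0) $ is a multiplicity sequence (this also follows directly from $ Z(x^r) = \{ 0 \} $). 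When $ a_1 \neq 0 $, the relation $ a_{i+1} = \sigma(\beta_i)\beta_i^{-1} a_i $ forces all $ a_j \neq 0 $ by induction, and $ \beta_i \notin \mathcal{V}_{a_i} $ becomes $ \beta_i a_i^{-1} \notin \{ \sigma(\beta) - \beta \mid \beta \in \mathbb{F} \} $.

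The key input is then the additive form of Hilbert's Theorem~90 for the cyclic extension $ K \subseteq \mathbb{F} $: the image of the $ K $-linear map $ \beta \mapsto \sigma(\beta) - \beta $ equals $ \ker({\rm T}_{\mathbb{F}/K}) $. One inclusion is the $ \sigma $-invariance of the trace; equality follows by a dimension count, since $ \ker(\sigma - {\rm Id}) = K $ gives the image $ K $-dimension $ m-1 $, and $ {\rm T}_{\mathbb{F}/K} $ being onto $ K $ gives its kernel $ K $-dimension $ m-1 $ as well. Hence $ \beta_i a_i^{-1} \notin \{ \sigma(\beta) - \beta \mid \beta \in \mathbb{F} \} $ is equivalent to $ {\rm T}_{\mathbb{F}/K}(\beta_i a_i^{-1}) \neq 0 $, which yields the stated characterization in the case $ a_1 \neq 0 $.

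Finally, for the ``in particular'' clause I would set $ a_1 = a_2 = \cdots = a_r = a $. If $ a = 0 $ this falls under the first alternative; if $ a \neq 0 $, then $ a = \sigma(\beta_i)\beta_i^{-1} a $ forces $ \sigma(\beta_i) = \beta_i $, i.e.\ $ \beta_i \in K^* $, and then $ {\rm T}_{\mathbb{F}/K}(\beta_i a^{-1}) = \beta_i\, {\rm T}_{\mathbb{F}/K}(a^{-1}) $ by $ K $-linearity, which is nonzero exactly when $ {\rm T}_{\mathbb{F}/K}(a^{-1}) \neq 0 $; since this single condition must hold for every $ i $, the claim follows. I do not expect a genuine obstacle here: the only points requiring care are correctly matching the set $ \mathcal{V}_{a_i} $ produced by Theorem~\ref{th charact multiplicity seqs in alg conj} with $ \ker({\rm T}_{\mathbb{F}/K}) $ via additive Hilbert~90, and the bookkeeping for the degenerate case $ a_1 = 0 $.
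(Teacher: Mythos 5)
Your proposal is correct and follows essentially the same route as the paper: reduce to item~4 of Theorem~\ref{th charact multiplicity seqs in alg conj} (after noting via Lemma~\ref{lemma algebraic conj class 2} that all conjugacy classes are algebraic), rewrite the conjugation and the set $ \mathcal{V}_{a_i} $ using $ \delta = 0 $ and commutativity, and identify $ \{ \sigma(\beta) - \beta \mid \beta \in \mathbb{F} \} $ with $ \ker({\rm T}_{\mathbb{F}/K}) $ by additive Hilbert~90, with the constant-sequence case handled by observing $ \beta_i \in K^* $. Your extra care with the $ r = 1 $ and $ a_1 = 0 $ cases is a small completeness bonus over the paper's proof but not a different argument.
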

\begin{proof}
First, all conjugacy classes are algebraic by Lemma \ref{lemma algebraic conj class 2}. By Theorem \ref{th charact multiplicity seqs in alg conj}, $ \mathbf{a} $ is a $ (\sigma,\delta) $-multiplicity sequence if, and only if, there exists $ \beta_i \in \mathbb{F}^* $ such that $ a_{i+1} = \sigma(\beta_i)\beta_i^{-1} a_i $ and $ \beta_i \notin \left\lbrace \left( \sigma(\beta) - \beta \right)a_i \mid \beta \in \mathbb{F} \right\rbrace $, for $ i = 1,2, \ldots, r-1 $. For a given $ i = 1,2,\ldots, r-1 $, the latter condition on $ \beta_i \in \mathbb{F}^* $ does not hold if, and only if, $ {\rm Tr}_{\mathbb{F}/K} \left( \beta_i a_i^{-1} \right) = 0 $, by Hilbert's additive Theorem 90 \cite[p. 290, Th. 6.3]{lang}, and the first statement follows. Second, if $ (a,\ldots,a) \in \mathbb{F}^r $, then $ \beta \in \mathbb{F}^* $ satisfies that $ a = \sigma(\beta)\beta^{-1} a $ if, and only if, $ \beta \in K $. In that case, $ {\rm T}_{\mathbb{F}/K}(\beta a^{-1}) = \beta {\rm T}_{\mathbb{F}/K}(a^{-1}) $, and the last statement follows. 
\end{proof}
%

We may now rephrase Corollary \ref{cor multiplicities on full conjugacy} for cyclic Galois extensions of fields. In the setting of Corollary \ref{cor multiplicity over cyclic galois} above, Hilbert's Theorem 90 \cite[p. 288, Th. 6.1]{lang} is equivalent to saying that the conjugacy class of $ a \in \mathbb{F}^* $ is an algebraic P-closed set with minimal skew polynomial $ F_{C^{\sigma,\delta}(a)} = x^m - N_{\mathbb{F}/K}(a) \in K[x^m] $ (see \cite[Th. 19]{lam}). Thus the following consequence of Corollary \ref{cor multiplicities on full conjugacy} can be seen as a Hilbert Theorem 90 with multiplicities.

\begin{corollary} [\textbf{Multiplicity Hilbert 90}] \label{cor hilbert 90}
Assume that $ \mathbb{F} $ is a field, $ \sigma $ has finite order $ m \in \mathbb{Z}_+ $, $ \delta = 0 $ and define $ K = \{ \beta \in \mathbb{F} \mid \sigma(\beta) = \beta \} \subseteq \mathbb{F} $. Given $ F \in \mathbb{F}[x;\sigma,\delta] $ and $ a \in \mathbb{F}^* $, and with multiplicities as in Definition \ref{def multiplicity II}, the following are equivalent:
\begin{enumerate}
\item
Every element in $ C^{\sigma,\delta}(a) $ is a zero of $ F $ of multiplicity $ r $.
\item
Every element in a P-basis of $ C^{\sigma,\delta}(a) $ is a zero of $ F $ of multiplicity $ r $.
\item
The central skew polynomial $ \left( x^m - N_{\mathbb{F}/K}(a) \right)^r \in K[x^m] $ divides $ F $.
\end{enumerate}
In particular, if $ a_1, a_2, \ldots, a_\ell \in \mathbb{F}^* $ are pair-wise non-conjugate, $ F \neq 0 $, and every element in $ C^{\sigma,\delta}(a_i) $ is a zero of $ F $ of multiplicity $ r_i \in \mathbb{Z}_+ $, for $ i = 1,2, \ldots, \ell $, then
$$ m \sum_{i=1}^\ell r_i \leq \deg(F) . $$
Furthermore, equality holds if, and only if, $ F = b \prod_{i=1}^\ell \left( x^m - N_{\mathbb{F}/K}(a_i) \right)^{r_i} $, where $ b \in \mathbb{F}^* $.
\end{corollary}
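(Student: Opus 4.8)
The plan is to read off the equivalence of items 1--3 as a direct specialization of Corollary~\ref{cor multiplicities on full conjugacy}, and then to obtain the degree bound by a coprimality argument carried out in the center of $\mathbb{F}[x;\sigma,\delta]$.

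First I would check that the standing hypotheses of Theorem~\ref{th min skew pol of conjugacy class with mult} and Corollary~\ref{cor multiplicities on full conjugacy} hold here. Since $\mathbb{F}$ is a field, for every $a\in\mathbb{F}^*$ the defining condition $\sigma(\beta)a+\delta(\beta)=a\beta$ of the centralizer collapses to $\sigma(\beta)=\beta$, so $K^{\sigma,\delta}_a=K$; as $\sigma$ has finite order $m$, the extension $K\subseteq\mathbb{F}$ is Galois of degree $m$, hence $\dim^R_{K^{\sigma,\delta}_a}(\mathbb{F})=m<\infty$ and, by Lemma~\ref{lemma algebraic conj class 2}, every conjugacy class is algebraic. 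I would also record, as in the paragraph preceding the statement, that $F_{C^{\sigma,\delta}(a)}=x^m-N_{\mathbb{F}/K}(a)$ for $a\in\mathbb{F}^*$ (Hilbert's Theorem~90, cf.\ \cite[Th.~19]{lam}), and note that this skew polynomial, and hence each of its powers, is central: its coefficients lie in $K$, which commutes with $x$ since $\sigma$ fixes $K$, and $x^m$ commutes with every element of $\mathbb{F}$ because $\sigma^m={\rm Id}$. In particular ``divides $F$'' in item~3 is unambiguous as to side.

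Granting these, items 1, 2 and 3 are exactly the three conditions of Corollary~\ref{cor multiplicities on full conjugacy} once $F_{C^{\sigma,\delta}(a)}^r$ is rewritten as $(x^m-N_{\mathbb{F}/K}(a))^r$, so this part is immediate. For the ``in particular'' statement, fix pairwise non-conjugate $a_1,\dots,a_\ell\in\mathbb{F}^*$ and a non-zero $F$ vanishing on each $C^{\sigma,\delta}(a_i)$ with multiplicity $r_i$. By the equivalence $1\Leftrightarrow 3$ just established, $g_i:=(x^m-N_{\mathbb{F}/K}(a_i))^{r_i}$ divides $F$ for every $i$. Next I would argue that $N_{\mathbb{F}/K}(a_i)\neq N_{\mathbb{F}/K}(a_j)$ for $i\neq j$: since $\delta=0$ and $\mathbb{F}$ is commutative, $C^{\sigma,\delta}(a)=\{\sigma(\beta)\beta^{-1}a\mid\beta\in\mathbb{F}^*\}$, and by the multiplicative Hilbert Theorem~90 \cite[p.~288, Th.~6.1]{lang} the set $\{\sigma(\beta)\beta^{-1}\mid\beta\in\mathbb{F}^*\}$ is precisely the kernel of $N_{\mathbb{F}/K}$, so $a\sim b$ if and only if $N_{\mathbb{F}/K}(a)=N_{\mathbb{F}/K}(b)$. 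Hence, writing $y=x^m$, the polynomials $y-N_{\mathbb{F}/K}(a_i)\in K[y]$ are pairwise coprime, and so are the $g_i$ inside the commutative ring $K[x^m]\subseteq Z(\mathbb{F}[x;\sigma,\delta])$. A routine induction on $\ell$ using B\'ezout identities in $K[x^m]$, valid because all the factors and B\'ezout coefficients involved are central and therefore commute with the cofactors of the divisibilities $g_i\mid F$, then shows that the central polynomial $G:=\prod_{i=1}^\ell g_i=\prod_{i=1}^\ell(x^m-N_{\mathbb{F}/K}(a_i))^{r_i}$ divides $F$. Since $G$ is monic of degree $m\sum_{i=1}^\ell r_i$, writing $F=QG$ gives $\deg(F)=\deg(Q)+m\sum_i r_i\geq m\sum_i r_i$, with equality if and only if $\deg(Q)=0$, i.e.\ $Q=b\in\mathbb{F}^*$ (non-zero since $F\neq 0$), i.e.\ $F=b\,G$; the converse being clear.

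The step I expect to require the most care is this ``coprime central divisors multiply'' argument: one must be sure to run the B\'ezout/CRT computation inside the center $K[x^m]$ so that all auxiliary factors genuinely commute past the non-central cofactors arising from $g_i\mid F$. An alternative, if one prefers to avoid it, is to take for each $i$ a P-basis of the multiplicity-$r_i$ conjugacy class $C^{\sigma,\delta}_{r_i}(a_i)$ of Theorem~\ref{th min skew pol of conjugacy class with mult}, verify via Theorem~\ref{th P-independence multiplicity seqs} that the union of these P-bases is P-independent, identify its minimal skew polynomial with $G$ by a degree count, and then invoke Theorem~\ref{th upper bound multiplicities by degree general}. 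Everything else is bookkeeping.
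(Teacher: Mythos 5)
Your proposal is correct. The equivalence of items 1--3 is handled exactly as the paper intends: the paper offers no explicit proof and simply presents the corollary as the specialization of Corollary~\ref{cor multiplicities on full conjugacy} to cyclic Galois extensions, using $K^{\sigma,\delta}_a=K$, $\dim_K(\mathbb{F})=m<\infty$ (so all conjugacy classes are algebraic by Lemma~\ref{lemma algebraic conj class 2}, and $\sigma$, having finite order, is surjective as required) and the identification $F_{C^{\sigma,\delta}(a)}=x^m-N_{\mathbb{F}/K}(a)$; your verification of these hypotheses matches that. Where you genuinely diverge is in the ``in particular'' degree bound: the paper's machinery points to taking P-bases of the P-closed sets $C^{\sigma,\delta}_{r_i}(a_i)$ from Theorem~\ref{th min skew pol of conjugacy class with mult}, checking via Theorem~\ref{th P-independence multiplicity seqs} that their union is P-independent, identifying the minimal skew polynomial with $\prod_i\left(x^m-N_{\mathbb{F}/K}(a_i)\right)^{r_i}$ by a degree count, and invoking Theorem~\ref{th upper bound multiplicities by degree general} --- the route you sketch only as an alternative. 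Your primary argument instead works entirely inside the center: non-conjugacy forces $N_{\mathbb{F}/K}(a_i)\neq N_{\mathbb{F}/K}(a_j)$ by multiplicative Hilbert~90, so the $g_i=\left(x^m-N_{\mathbb{F}/K}(a_i)\right)^{r_i}$ are pairwise coprime central polynomials, and the B\'ezout computation ($F=Fug_1+Fvg_2=(Q_2u+Q_1v)g_1g_2$, legitimate because $u,v,g_1,g_2$ all lie in the commutative central subring $K[x^m]$) shows their product divides $F$. This is more elementary and self-contained --- it avoids the P-independence framework entirely for this step --- at the cost of being special to the situation where the minimal skew polynomials of the classes are central and pairwise coprime; the framework route is what generalizes. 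Both arguments are sound, and your degree count and equality characterization ($\deg(F)=\deg(Q)+\deg(G)$, equality iff $Q\in\mathbb{F}^*$) are correct.
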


A particular case of interest is that of skew polynomials over finite fields (note that Examples \ref{ex power of linear term is no multiplicity} and \ref{ex full conjugacy class pathology for naive mult} above are of this form).

\begin{example} [\textbf{Finite fields}] \label{ex multiplicity over finite fields}
Let $ q $ be a power of a prime number and let $ m \in \mathbb{Z}_+ $. Let $ \mathbb{F}_q \subseteq \mathbb{F}_{q^m} $ be an extension of finite fields of sizes $ q $ and $ q^m $, respectively. The Galois group of this extension is cyclic and generated by the $ q $-Frobenius automorphism $ \sigma : \mathbb{F}_{q^m} \longrightarrow \mathbb{F}_{q^m} $, given by $ \sigma (a) = a^q $, for all $ a \in \mathbb{F}_{q^m} $. As above, we may assume that $ \delta = 0 $. 

In this setting, Corollary \ref{cor multiplicity over cyclic galois} says that $ \mathbf{a} = (a_1, a_2, \ldots, a_r) \in \mathbb{F}_{q^m}^r $ is a $ (\sigma,\delta) $-multiplicity sequence if, and only if, $ a_1 = a_2 = \ldots = a_r = 0 $ or $ a_1 \neq 0 $ and there exist $ \beta_i \in \mathbb{F}_{q^m}^* $ such that
$$ a_{i+1} = \beta_i^{q-1} a_i \quad \textrm{and} \quad {\rm T}_{\mathbb{F}_{q^m} / \mathbb{F}_q} \left( \beta_i a_i^{-1} \right) = \sum_{j=0}^{m-1} \beta_i^{q^j} a_i^{-q^j} \neq 0, $$
for $ i = 1,2, \ldots, r-1 $. In particular, if $ a_1 \neq 0 $, then we have that 
$$ {\rm N}_{\mathbb{F}_{q^m} / \mathbb{F}_q} \left( a_{i+1} a_i^{-1} \right) = \prod_{j=0}^{m-1} a_{i+1}^{q^j} a_i^{-q^j} = 1. $$
Observe that we may replace $ \sigma $ by any other generator of the Galois group, that is, we may consider $ \sigma(a) = a^{q^s} $, for all $ a \in \mathbb{F}_{q^m} $, where $ s \in \mathbb{Z}_+ $ is coprime with $ m $.

Now, consider the particular setting of Examples \ref{ex power of linear term is no multiplicity} and \ref{ex full conjugacy class pathology for naive mult}. That is, consider $ q $ odd, $ m $ even with $ m \geq 2 $, and define
$$ a = \gamma^{ \frac{1}{2} \cdot \frac{q^m - 1}{q-1}} \quad \textrm{and} \quad b = - \gamma^{ \frac{1}{2} \cdot \frac{q^m - 1}{q-1}} . $$
where $ \gamma \in \mathbb{F}_{q^m}^* $ is a primitive element. According to Example \ref{ex power of linear term is no multiplicity}, $ (a,a) \in \mathbb{F}_{q^m}^* $ is not a $ (\sigma,\delta) $-multiplicity sequence since a simple computation shows that $ (x-a)^2 = (x-b)^2 $ and $ a \neq b $. We may verify this again by checking the conditions in Corollary \ref{cor multiplicity over cyclic galois}. We simply note that, since $ a^q = -a $, then
$$ {\rm T}_{\mathbb{F}_{q^m}/\mathbb{F}_q} \left( a^{-1} \right) = \frac{m}{2} \cdot a^{-1} - \frac{m}{2} \cdot a^{-1} = 0, $$
since $ m $ is even. Thus Corollary \ref{cor multiplicity over cyclic galois} also states that $ (a,a) $ is not a $ (\sigma,\delta) $-multiplicity sequence.
\end{example}

Another interesting example is that of complex conjugation.

\begin{example} [\textbf{Complex conjugation}]  \label{ex multiplicity over complex conj}
Consider the cyclic Galois extension $ \mathbb{R} \subseteq \mathbb{C} $ of order $ 2 $, where $ \mathbb{C} $ denotes the complex field. As is well known, the generator of the Galois group of this extension is $ \sigma : \mathbb{C} \longrightarrow \mathbb{C} $, given by complex conjugation, that is,
$$ \sigma(a) = \overline{a} = {\rm Re}(a) - {\rm Im}(a), $$
for all $ a \in \mathbb{C} $. As above, we may assume that $ \delta = 0 $.

In this setting, Corollary \ref{cor multiplicity over cyclic galois} says that $ \mathbf{a} = (a_1, a_2, \ldots, a_r) \in \mathbb{C}^r $ is a $ (\sigma,\delta) $-multiplicity sequence if, and only if, $ a_1 = a_2 = \ldots = a_r = 0 $ or $ a_1 \neq 0 $ and there exists $ \beta_i \in \mathbb{C}^* $ such that
$$ a_{i+1} = \overline{\beta}_i \beta_i^{-1} a_i \quad \textrm{and} \quad {\rm T}_{\mathbb{C} / \mathbb{R}} \left( \beta_i a_i^{-1} \right) = \beta_i a_i^{-1} + \overline{\beta_i a_i^{-1}} \neq 0, $$
for $ i = 1,2, \ldots, r-1 $. By a straightforward calculation we deduce that, if $ a_1 \neq 0 $, then $ \mathbf{a} $ is a $ (\sigma,\delta) $-multiplicity sequence if, and only if, 
$$ |a_{i+1}| = | a_i | \quad \textrm{and} \quad \overline{a}_{i+1} \neq - a_i, $$
for $ i = 1,2, \ldots, r-1 $. It is interesting to note that $ \mathbf{a} = (a,a, \ldots, a) \in \mathbb{C}^r $ is a $ (\sigma,\delta) $-multiplicity sequence if, and only if, $ a $ is not purely imaginary, that is, $ \overline{a} \neq -a $. We may verify the reversed implication directly by noting that, if $ a = ib \in \mathbb{C} $, where $ b \in \mathbb{R}^* $ and $ i $ denotes the imaginary unit, then
$$ (x-a)^2 = x^2 - b^2 \in \mathbb{R}[x^2], $$
operating inside the ring $ \mathbb{C}[x;\sigma,\delta] $. Therefore, $ (x-a)^2 $ has two distinct zeros, namely, $ b $ and $ -b $. We leave to the reader the verification that $ (x-a)^2 \in \mathbb{C}[x;\sigma,\delta] $ only has one zero (without counting multiplicities) if $ a $ is not purely imaginary.
\end{example}

Another common scenario is that of standard derivations over fields, which appears in \textit{differential Galois Theory} \cite{jacobson-derivation, singer}. In this case, the corresponding skew polynomial ring is commonly called a \textit{differential polynomial ring}. The following result is straightforward by item 4 in Theorem \ref{th charact multiplicity seqs in alg conj}. Again in this case, all conjugacy classes are algebraic.

\begin{corollary} [\textbf{Standard derivations}]  \label{cor multiplicity over derivations}
Assume that $ \mathbb{F} $ is a field, $ \sigma = {\rm Id} $ is the identity morphism, and $ \delta $ is an arbitrary $ {\rm Id} $-derivation (that is, a standard derivation of the field $ \mathbb{F} $). Let $ K = \{ \beta \in \mathbb{F} \mid \delta(\beta) = 0 \} \subseteq \mathbb{F} $ denote the subfield of constants of $ \delta $, and assume that $ \dim_K(\mathbb{F}) < \infty $. The vector $ (a_1, a_2, \ldots, a_r) \in \mathbb{F}^r $ is a $ (\sigma,\delta) $-multiplicity sequence if, and only if, there exist $ \beta_i \in \mathbb{F}^* $ such that, for $ i = 1,2, \ldots, r-1 $,
$$ a_{i+1} - a_i = \delta(\beta_i) \beta_i^{-1} \quad \textrm{and} \quad \beta_i \notin \delta(\mathbb{F}). $$
\end{corollary}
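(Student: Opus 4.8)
The plan is to deduce this corollary directly from the equivalence $1 \Longleftrightarrow 4$ in Theorem~\ref{th charact multiplicity seqs in alg conj}, after verifying that the hypotheses of that theorem hold in the present setting. The morphism $\sigma = {\rm Id}$ is trivially surjective. Moreover, since $\mathbb{F}$ is commutative and $\sigma = {\rm Id}$, the centralizer $K^{\sigma,\delta}_a = \{ \beta \in \mathbb{F} \mid \sigma(\beta) a + \delta(\beta) = a\beta \}$ reduces, for every $a \in \mathbb{F}$, to $\{ \beta \in \mathbb{F} \mid \delta(\beta) = 0 \} = K$; hence the assumption $\dim_K(\mathbb{F}) < \infty$ together with Lemma~\ref{lemma algebraic conj class 2} guarantees that all conjugacy classes are algebraic, so Theorem~\ref{th charact multiplicity seqs in alg conj} applies.

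The case $r = 1$ is trivial, as both sides of the claimed equivalence hold (the only zero of $x - a_1$ is $a_1$, and the range of indices on the right is empty), so I would assume $r \geq 2$ and invoke Theorem~\ref{th charact multiplicity seqs in alg conj}. By item~4 of that theorem, $\mathbf{a}$ is a $(\sigma,\delta)$-multiplicity sequence if and only if, for each $i = 1, 2, \ldots, r-1$, there exists $\beta_i \in \mathbb{F}^*$ with $a_{i+1} = a_i^{\beta_i}$ and $\beta_i \notin \{ \mathcal{D}^{\sigma,\delta}_{a_i}(\beta) - a_i \beta \mid \beta \in \mathbb{F} \}$. It then remains to rewrite these two conditions. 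Using $\sigma = {\rm Id}$ and commutativity, the conjugation formula gives $a_i^{\beta_i} = \sigma(\beta_i) a_i \beta_i^{-1} + \delta(\beta_i)\beta_i^{-1} = a_i + \delta(\beta_i)\beta_i^{-1}$, so $a_{i+1} = a_i^{\beta_i}$ is equivalent to $a_{i+1} - a_i = \delta(\beta_i)\beta_i^{-1}$. Similarly, $\mathcal{D}^{\sigma,\delta}_{a_i}(\beta) - a_i \beta = \sigma(\beta) a_i + \delta(\beta) - a_i \beta = \delta(\beta)$, so the forbidden set is exactly $\delta(\mathbb{F})$ and the second condition becomes $\beta_i \notin \delta(\mathbb{F})$, as claimed.

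There is no serious obstacle here: the statement is a specialization, and the only care required is in the bookkeeping — confirming that the centralizer equals $K$ uniformly in $a$ (so that "all conjugacy classes are algebraic" is captured by the single finite-dimensionality hypothesis) and that the set $\{ \mathcal{D}^{\sigma,\delta}_{a_i}(\beta) - a_i \beta \mid \beta \in \mathbb{F} \}$ collapses to $\delta(\mathbb{F})$ once $\sigma = {\rm Id}$ and $\mathbb{F}$ is a field. Both collapses are immediate consequences of commutativity. (One could alternatively identify the forbidden set with the hyperplane $\mathcal{V}^{\sigma,\delta}_{a_i}$ of Lemma~\ref{lemma image of varphi is hyperplane}, which here is literally $\delta(\mathbb{F})$, but this is not needed for the equivalence.)
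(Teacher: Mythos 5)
Your proposal is correct and follows exactly the route the paper intends: the paper states the corollary is ``straightforward by item 4 in Theorem \ref{th charact multiplicity seqs in alg conj}'' after noting all conjugacy classes are algebraic, and your explicit verifications (that $K^{\sigma,\delta}_a = K$ for every $a$, that $a_i^{\beta_i} = a_i + \delta(\beta_i)\beta_i^{-1}$, and that the forbidden set collapses to $\delta(\mathbb{F})$) are precisely the omitted bookkeeping. No gaps.
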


In the setting of Corollary \ref{cor multiplicity over derivations} above, the dimension of $ \mathbb{F} $ over $ K $ can be determined from the minimal polynomial of the derivation $ \delta $. The following result is a particular case of \cite[Th. 5.10]{algebraic-conjugacy}.

\begin{lemma}[\textbf{\cite{algebraic-conjugacy}}] \label{lemma algebraic derivations}
Assume that $ \mathbb{F} $ is a field and let $ \delta $ be an arbitrary derivation. If $ K = \{ \beta \in \mathbb{F} \mid \delta(\beta) = 0 \} \subseteq \mathbb{F} $, then $ \mathbb{F} $ has finite dimension over $ K $ if, and only if, there exists a non-zero polynomial $ P \in \mathbb{F}[y] $ such that $ P(\delta) = 0 $, that is, $ \delta $ is algebraic. In such a case, $ \dim_K(\mathbb{F}) $ is the degree of the minimum polynomial of $ \delta $ over $ \mathbb{F} $.
\end{lemma}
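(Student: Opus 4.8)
The plan is to reduce to the conjugacy class of $0$ and then invoke Lemma \ref{lemma algebraic conj class 2}. Since $\sigma = \mathrm{Id}$, the centralizer of $0$ is $K^{\sigma,\delta}_0 = \{\beta \in \mathbb{F} \mid \delta(\beta) = 0\} = K$, and the conjugacy class of $0$ is $C^{\sigma,\delta}(0) = \{0^\beta \mid \beta \in \mathbb{F}^*\} = \{\delta(\beta)\beta^{-1} \mid \beta \in \mathbb{F}^*\}$. As $\mathbb{F}$ is commutative, left and right dimensions over $K$ coincide, so Lemma \ref{lemma algebraic conj class 2} reads: $C^{\sigma,\delta}(0)$ is algebraic if and only if $\dim_K(\mathbb{F}) < \infty$, and in that case $\mathrm{Rk}(C^{\sigma,\delta}(0)) = \deg(F_{C^{\sigma,\delta}(0)}) = \dim_K(\mathbb{F})$. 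It therefore suffices to identify ``algebraicity of $C^{\sigma,\delta}(0)$'' and ``$\deg(F_{C^{\sigma,\delta}(0)})$'' in terms of $\delta$ as an operator.

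First I would prove that, writing $F(\delta) = \sum_i F_i \delta^i$ for $F = \sum_i F_i x^i \in \mathbb{F}[x;\mathrm{Id},\delta]$, one has $F \in I(C^{\sigma,\delta}(0))$ if and only if $F(\delta) = 0$ as an additive endomorphism of $\mathbb{F}$. This is immediate from Lemma \ref{lemma connecting evaluations} applied with $a = 0$: since $\mathcal{D}_0 = \delta$ and $F^{\mathcal{D}_0} = F(\delta)$ in the notation of Definition \ref{def lin op pols}, for every $\beta \in \mathbb{F}^*$ we have $F\big(\delta(\beta)\beta^{-1}\big) = F(0^\beta) = F^{\mathcal{D}_0}(\beta)\,\beta^{-1} = \big(F(\delta)\big)(\beta)\,\beta^{-1}$, and this vanishes for every $\beta \in \mathbb{F}^*$ precisely when $F(\delta)$ kills all of $\mathbb{F}$ (the value at $0$ being automatic). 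Hence $I(C^{\sigma,\delta}(0)) \neq \{0\}$ if and only if some nonzero $P \in \mathbb{F}[y]$ satisfies $P(\delta) = 0$; together with the first paragraph this gives the equivalence in the statement.

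For the degree formula, observe that $x \mapsto \delta$ and $a \mapsto m_a$ (multiplication by $a$) extend to a ring homomorphism $\Phi : \mathbb{F}[x;\mathrm{Id},\delta] \to \mathrm{End}_K(\mathbb{F})$, because the defining relation $xa = ax + \delta(a)$ is matched by the Leibniz identity $\delta \circ m_a = m_a \circ \delta + m_{\delta(a)}$. Then $\Phi(F) = F(\delta)$, so $\ker \Phi = I(C^{\sigma,\delta}(0))$ by the previous paragraph, and this is a left ideal. Since $\mathbb{F}[x;\mathrm{Id},\delta]$ is right Euclidean, $\ker\Phi$ is generated as a left ideal by its unique monic element of least degree, which is $F_{C^{\sigma,\delta}(0)}$ and which is, by definition, the minimum polynomial of $\delta$ over $\mathbb{F}$. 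Therefore $\deg(F_{C^{\sigma,\delta}(0)})$ is exactly the degree of that minimum polynomial, and by Lemma \ref{lemma algebraic conj class 2} this equals $\dim_K(\mathbb{F})$.

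The step I expect to require the most care --- and the only genuine obstacle --- is pinning down conventions around $\Phi$ and the phrase ``minimum polynomial of $\delta$ over $\mathbb{F}$'': one must fix that coefficients are written on the left (so that $P(\delta)$ agrees with $F^{\mathcal{D}_0}$), verify that $\Phi$ with this convention is actually multiplicative, and note that the relevant notion of minimum polynomial is the monic generator of $\ker\Phi$ inside the skew polynomial ring, not anything defined in the commutative ring $\mathbb{F}[y]$ --- where, for a non-inner $\delta$, the set of annihilating polynomials fails to be an ideal. Everything else is a direct application of Lemmas \ref{lemma algebraic conj class 2} and \ref{lemma connecting evaluations}.
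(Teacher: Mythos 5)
Your argument is correct. The paper gives no proof of this lemma --- it is simply quoted as a particular case of Lam--Leroy's Theorem 5.10, the same result behind Lemma \ref{lemma algebraic conj class 2} --- and your derivation (specializing Lemma \ref{lemma algebraic conj class 2} to the conjugacy class of $0$, where the centralizer is $K$, and using Lemma \ref{lemma connecting evaluations} to identify $I(C^{\sigma,\delta}(0))$ with the annihilator of $\delta$ as an operator) is exactly the specialization the paper is alluding to, including the appropriate caveat that the minimum polynomial is the monic generator of that left ideal in the skew ring rather than an object of the commutative ring $\mathbb{F}[y]$.
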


In particular, in positive characteristic, the \textit{D-fields} introduced by Jacobson in \cite[Sec. III]{jacobson-derivation} fall into the setting of Corollary \ref{cor multiplicity over derivations}. Perhaps the simplest non-trivial examples of D-fields are given by rational functions in positive characteristic.

\begin{example} [\textbf{Rational function fields}] \label{ex multiplicity over rational function fields}
Let $ k $ be a field of characteristic $ p > 0 $. Define $ \mathbb{F} = k(z) $ to be the field of rational functions over $ k $ in $ z $. Let $ \delta : \mathbb{F} \longrightarrow \mathbb{F} $ be a non-zero derivation (note that they are all given by $ \delta = c d/dz $, where $ c \in \mathbb{F}^* $ and $ d/dz $ is the usual derivation). As noted by Jacobson \cite{jacobson-derivation}, since $ p > 0 $, it holds that $ \delta^p $ is again a derivation. Furthermore, it is trivial to see that $ \delta^p = 0 $ in $ k[z] $. Using the defining properties of derivations, we deduce that
$$ \delta^p (f g^{-1}) = f \delta^p(g^{-1}) + \delta^p(f) g^{-1} = - f \delta^p(g) g^{-2} = 0, $$
for all $ f, g \in k[z] $ with $ g \neq 0 $. Hence we deduce that $ \delta^p = 0 $ in $ \mathbb{F} $. In fact, $ P = y^p - 1 \in \mathbb{F}[y] $ is the minimal polynomial of $ \delta $ over $ \mathbb{F} $. We may check the validity of Lemma \ref{lemma algebraic derivations} by noting that
$$ K = \{ \beta \in \mathbb{F} \mid \delta(\beta) = 0 \} = k(z^p), $$
and $ 1,z,z^2, \ldots, z^{p-1} \in \mathbb{F} $ form a basis of $ \mathbb{F} $ as a vector space over $ K $ (the proof is straightforward and left to the reader). Now, by writing an element in $ \mathbb{F} $ as $ f(z) = \sum_{i=0}^{p-1} f_i(z^p) z^i $, where $ f_i(z^p) \in K $, for $ i = 0,1, \ldots, p-1 $, it is easy to see that
\begin{equation}
\delta (\mathbb{F}) = \langle 1,z, z^2, \ldots, z^{p-2} \rangle_K .
\label{eq example rational functions}
\end{equation}
We may also check the validity of Lemma \ref{lemma image of varphi is hyperplane} by noting that
$$ \dim_K (\delta(\mathbb{F})) = p-1 = \dim_K(\mathbb{F}) - 1. $$
Now, taking (\ref{eq example rational functions}) into account and setting $ \sigma = {\rm Id} $, Corollary \ref{cor multiplicity over derivations} says that $ \mathbf{a} = (a_1, a_2, \ldots, a_r) \in \mathbb{F}^r $ is a $ (\sigma,\delta) $-multiplicity sequence if, and only if, there exists
$$ \beta_i = \sum_{j=0}^{p-1} f_{i,j}(z^p) z^j \in k(z), $$
where $ f_{i,j}(z^p) \in k(z^p) $, for $ j =0,1, \ldots, p-1 $, $ f_{i,p-1}(z^p) \neq 0 $ (that is, $ \beta_i \notin \delta(\mathbb{F}) $), and
$$ a_{i+1} = a_i + \delta(\beta_i) \beta_i^{-1}, $$
for $ i = 1,2, \ldots, r-1 $. Fix now an index $ i = 1,2, \ldots, r-1 $. It holds that
$$ \delta(\beta_i) = \sum_{j=1}^{p-1} f_{i,j}(z^p) j \delta(z) z^{j-1} \neq 0, $$
since $ f_{i,p-1}(z^p) (p-1) \delta(z) \in K^* $ and $ 1, z, \ldots, z^{p-2} $ are linearly independent over $ K $. Therefore, we conclude that for all $ a \in \mathbb{F} $ and all $ r \in \mathbb{N} $ such that $ r \geq 2 $, it holds that $ \mathbf{a} = (a,a,\ldots, a) \in \mathbb{F}^r $ is not a $ (\sigma,\delta) $-multiplicity sequence, which is a particularly pathological case. 

As in previous examples, we may directly verify that $ (x-a)^2 \in \mathbb{F}[x;\sigma,\delta] $ has a zero $ b \in \mathbb{F} $ such that $ b \neq a $, in this case for all $ a \in \mathbb{F} $. A direct computation shows that this condition holds if, and only if, there exists $ c \in \mathbb{F}^* $ such that
$$ c^2 = - \delta(c), $$
which indeed does not depend on $ a \in \mathbb{F} $. If we set $ \delta(z) = 1 $ for simplicity (that is, $ \delta = d/dz $ is the usual derivation), then we are done by taking the element $ c = z^{-1} $.
\end{example}

{\footnotesize


}

\end{document}